\documentclass[12pt]{amsart}
\usepackage[margin=1.1in]{geometry}
\usepackage{amsmath,amsthm}
\usepackage{amssymb,amsxtra,verbatim}
\usepackage{latexsym}
\usepackage{color}

\usepackage{graphicx}
\usepackage[version=0.96]{pgf}
\usepackage{tikz}

\usepackage[cmtip,all]{xy}

\usepackage[T1]{fontenc}
\usepackage{mathptmx}
\linespread{1.044}

\usepackage[
	hyperindex,
	pagebackref,
	pdftex,
	pdfdisplaydoctitle,
	pdfpagemode=UseNone,
	breaklinks=true,
	extension=pdf,
	bookmarks=false,
	plainpages=false,
	colorlinks,
	linkcolor=blue,
	citecolor=red,
	urlcolor=red
]{hyperref}

\theoremstyle{plain}
\newtheorem{main-theorem}{Main Theorem}

\numberwithin{equation}{section}

\theoremstyle{plain}
\newtheorem{theorem}{Theorem}[section]
\newtheorem{prop}[theorem]{Proposition}
\newtheorem{corollary}[theorem]{Corollary}

\newtheorem{lemma}[theorem]{Lemma}

\theoremstyle{definition}

\newtheorem{definition}[theorem]{Definition}
\newtheorem{example}[theorem]{Example}
\newtheorem{remark}[theorem]{Remark}

\newcommand{\slfrac}[2]{\left.#1\middle/#2\right.}

\def\ra{\rightarrow}

\def\co{\colon\thinspace} 

 \DeclareMathOperator{\Ext}{Ext}

   \DeclareMathOperator{\Def}{Def}

\DeclareMathOperator{\Pic}{Pic}

\DeclareMathOperator{\Tr}{Tr}
\DeclareMathOperator{\Hom}{Hom}
\DeclareMathOperator{\SHom}{\mathit{\mathcal{H}om}}

\DeclareMathOperator{\spec}{Spec}
\DeclareMathOperator{\proj}{Proj}

\DeclareMathOperator{\Spec}{\mathit{Spec}}

\DeclareMathOperator{\Bl}{Bl}
\DeclareMathOperator{\Sym}{Sym}
\DeclareMathOperator{\Hilb}{Hilb}
\DeclareMathOperator{\codim}{codim}

\DeclareMathOperator{\mult}{mult}

\DeclareMathOperator{\Sing}{Sing}
\DeclareMathOperator{\Exc}{Exc}

\DeclareMathOperator{\lct}{lct}
\DeclareMathOperator{\pr}{pr}

\DeclareMathOperator{\q}{\textbf{q}}

\def\Rat{\mathfrak{M}_0}

\def\Hn1{\mathcal{H}_{n,1}}

\renewcommand\S{\mathfrak{S}}
\def\ev{\mathrm{even}}
\def\odd{\mathrm{odd}}
\DeclareMathOperator{\brr}{\mathbf{br}}
\DeclareMathOperator{\HH}{\mathrm{H}}
\DeclareMathOperator{\br}{br}
\def\smooth{\mathrm{smooth}}
\def\irr{\text{irr}}
\def\red{\text{red}}
\def\sh{\text{sh}}

\def\A{\mathcal{A}}
\def\C{\mathcal{C}}
\def\D{\mathcal{D}}

\def\O{\mathcal{O}}

\def\H{\mathcal{H}}
\renewcommand{\P}{\mathcal{P}}

\def\MM{\mathcal{M}}
\newcommand\Mg[1]{\overline{\mathcal{M}}_{#1}}

\newcommand\Mdiv[1]{\mathcal{R}_{#1}}
\newcommand\CMdiv[1]{R_{#1}}

\def\L{\mathcal{L}}
\def\I{\mathcal{I}}
\def\J{\mathcal{J}}
\def\E{\mathcal{E}}
\def\R{\mathcal{R}}
\def\X{\mathcal{X}}
\def\Y{\mathcal{Y}}

\def\T{\mathcal{T}}
\def\W{\mathcal{W}}
\def\m{\mathfrak{m}}

\def\AA{\mathbb{A}}
\def\QQ{\mathbb{Q}}
\def\ZZ{\mathbb{Z}}

\def\PP{\mathbb{P}}
\def\ZZ{\mathbb{Z}}
\def\NN{\mathbb{N}}

\def\GG{\mathbb{G}}

\def\KK{\mathbb{K}}

\newcommand{\M}{\overline{M}}
\newcommand{\Tl}[1]{\mathcal{T}_{#1}}

\def\UU{\mathfrak{U}}

\DeclareMathOperator\Isom{\mathrm{Isom}}
\DeclareMathOperator\PGL{PGL}

\def\Sch{\mathfrak{Sch}}
\def\nb{\nobreakdash}

\begin{document}

\title{Moduli spaces of hyperelliptic curves with A and D singularities}

\author{Maksym Fedorchuk}
\address{Department of Mathematics, Columbia University, 2990 Broadway, New York, NY 10027}
\curraddr{}
\email{mfedorch@math.columbia.edu}
\begin{abstract}
We introduce moduli spaces of quasi-admissible hyperelliptic covers with at worst $A$ and $D$ singularities. Stability conditions for these moduli problems depend on two parameters describing allowable singularities. By varying these parameters, we go from the stacks $\T_{A_n}$ 
and $\T_{D_n}$ of stable limits of $A_n$ and $D_n$ singularities to the quotients 
of their versal deformation spaces by a natural $\GG_m$ action. 
We prove that the intermediate spaces are 
log canonical models of $\T_{A_n}$ and $\T_{D_n}$. 
\end{abstract}

\maketitle 

\setcounter{tocdepth}{1}
\tableofcontents

\section{Introduction}
\label{S:intro}

We begin a systematic study of the interplay between local geometry 
of the versal deformation space of a curve singularity and global geometry of the 
so-called stack of its stable limits. Here, we treat the case of simple planar curve singularities of types $A$ and $D$. The choice is explained by the possibility to treat these singularities in 
a unified fashion using our theory of
quasi-admissible hyperelliptic covers. 

To begin, we associate to a smoothable curve singularity 
the variety of all possible stable limits obtained by applying the stable reduction of 
Deligne and Mumford \cite{DM} to a smoothing of the singularity.
In fact, as follows from the following definition, this variety can be 
given the structure of a Deligne-Mumford stack. 
\begin{definition}[Stack of stable limits]\label{D:variety-stable-limits} 
Let $C$ be a proper integral curve of arithmetic genus $g$ with a single isolated singularity $p$ such that $\hat{\O}_{C,p}$ is smoothable and $\Def(C)$ is irreducible.
Consider a rational {\em moduli map} 
$
j\co \Def(C) \dashrightarrow \Mg{g}
$ and its graph
 \[
\xymatrix{
&Z  \ar[rd]^{q} \ar[ld]_{p} &\\
\Def(C) \ar@{-->}[rr]&&\Mg{g}
}
\]
We define $\Tl{\hat{\O}_{C,p}}:=q(p^{-1}(0))\subset \Mg{g}$ to be the {\em stack of stable limits} of $\hat{\O}_{C,p}$.
\end{definition}
The stack of stable limits was introduced by Hassett in \cite[Section 3]{Hassett-stable}, where the description of $\Tl{\hat{\O}_{C,p}}$ is obtained for
certain toric and quasi-toric planar singularities.

As a variety, $\Tl{\hat{\O}_{C,p}}$ is simply 
the locus of stable curves appearing as stable limits of smoothings of 
$C$. If $b$ is the number of analytic branches of $p\in C$ and $\delta(p)$ is the $\delta$\nb-invariant of $\hat{\O}_{C,p}$,
then curves lying in $\Tl{\hat{\O}_{C,p}}$ have form $\tilde{C} \cup T$, 
where $(\tilde{C}, q_1, \dots, q_b)$ is the pointed normalization of $C$ and 
$(T,p_1,\dots, p_b)$ is a $b$-pointed curve of arithmetic genus 
$\gamma=\delta(p)-b+1$ (here, $(T,p_1,\dots, p_b)$ is attached to $(\tilde{C}, q_1, \dots, q_b)$ by identifying $p_i$ with $q_i$).
The essential information of the stable limit is encoded in 
$(T, p_1,\dots, p_b)$, called the {\em tail of a stable limit of 
$\hat{\O}_{C,p}$}. Tails of stable limits are independent of $\tilde{C}$
and depend only on $\hat{\O}_{C,p}$. 
It follows that $\Tl{\hat{\O}_{C,p}}$ 
is naturally identified with a closed substack of 
$\Mg{\gamma,b}$ (cf. \cite[Proposition 3.2]{Hassett-stable}).

Much attention is devoted in \cite{Hassett-stable} 
to the case of planar $A$ and $D$ singularities.
In particular, it is shown there that a generic tail of a stable limit of 
the $A_n$ singularity ($y^{2}-x^{n+1}=0$) is a smooth
hyperelliptic curve of genus $\lfloor n/2\rfloor$, marked by a Weierstrass point if $n$ is even, or by two 
points conjugate under the hyperelliptic involution if $n$ is odd. 
In the case of the $D_n$ singularity ($x(y^{2}-x^{n-1})=0$), the picture is similar: 
The tails of stable limits are hyperelliptic curves 
of genus $\lfloor (n-1)/2 \rfloor$, marked by 
three points -- two of which are conjugate -- if $n$ is even, or by 
two points -- one of which is a Weierstrass point -- if $n$ is odd. 
This description of $\Tl{A_n}$ and $\Tl{D_n}$ motivated our 
definition of quasi-admissible hyperelliptic covers 
in Section \ref{S:quasi-admissible-covers}.

Understanding varieties of stable limits is crucial
to the study of deformation theory of curve singularities on the one hand, and to the study of birational geometry of 
$\M_g$
on the other hand.
One application of our Main Theorem \ref{main2} is
a functorial {\em simultaneous $(A_k, D_\ell)$-stable reduction} for curves with at worst $A$ and $D$ singularities, generalizing the simultaneous semistable
reduction for ADE curves \cite{radu-yano} in the AD case.
This application is described in more detail in Section \ref{S:final}.

We now discuss the relevance of our results to the Mori-theoretic study 
of $\M_g$ in the program initiated by Hassett and Keel, whose ultimate goal
is the description of log canonical models 
\[
\M_{g}(\alpha):=\proj\bigoplus_{m\geq 0}\HH^{0}(\Mg{g}, \lfloor m(K_{\Mg{g}}+\alpha\delta)\rfloor).
\]
The varieties of stable limits of curve singularities feature prominently in the 
study of $\M_{g}(\alpha)$ because they often lie in the stable base 
loci of $K_{\Mg{g}}+\alpha\delta$, and are, as a rule, 
very special 
loci inside $\M_{g}$. 
Examples include the locus of hyperelliptic curves, 
the locus of trigonal curves of the highest Maroni invariant, 
the Petri divisor in $\M_4$, etc. 
As a result, the varieties of stable limits appear in factorizations 
into blow-ups and blow-downs 
of rational maps between those log canonical models of $\M_g$ that are presently 
understood due to work of Hassett, Hyeon, and Lee \cite{HH1, HH2, HL}.
For example, consider $\M_{g}^{ps}=\M_{g}(9/10)$ -- the coarse moduli space of
at worst cuspidal curves, and $\M_g^{hs}=\M_{g}(7/10-\epsilon)$ -- the coarse moduli 
space of at worst tacnodal curves (for precise definitions see \cite{Schubert} and \cite{HH1, HH2}). 
Then by \cite{HH1}, there is a regular morphism $\M_g\ra \M_g^{ps}$, a divisorial contraction whose exceptional locus is $\Delta_1$ -- the locus of curves with elliptic tails. 
By \cite{HH2},
there is a rational map $\M_g^{ps} \dashrightarrow \M_g^{hs}$, a flip of the locus of curves with {\em elliptic bridges} to the locus of tacnodal curves. 
The relevance of  varieties of stable limits is as follows:
The $1$\nb-dimensional fibers of $\M_{g} \ra \M_{g}^{ps}$ are exactly 
the varieties $\T_{A_2}$ of stable limits of the cusp ($y^2=x^3$); in particular, we 
have $\T_{A_2}\simeq \M_{1,1}$. Further, there exists a resolution of the rational map $\M_g^{ps} 
\dashrightarrow \M_g^{hs}$
 \[
\xymatrix{
W_1\ar[d] \ar[rd] \\
\M_g\ar[d] &W_2 \ar[rd] \ar[ld]&\\
\M_g^{ps} \ar@{-->}[rr]&&\M_{g}^{hs}
}
\]
and the $2$\nb-dimensional fibers of $W_1\ra  \M_{g}^{hs}$ 
are exactly the varieties $\T_{A_3}$ of stable limits of the tacnode $(y^2=x^4)$; in particular, $\T_{A_3}\simeq \M_{1,2}$. 
The $2$-dimensional fibers of
$W_2 \ra \M_{g}^{hs}$ are the loci of curves obtained by varying a genus 
$1$ at worst cuspidal bridge. These loci are isomorphic to 
our $\H_{3}[2]$ of Definition \ref{D:stack-Hnk}.

In a similar fashion, whenever 
a moduli space $\M_{g}[A_k, D_\ell]$ parameterizing proper curves of genus $g$ 
with at 
worst  $A_k$ and $D_\ell$ singularities ($k\geq \ell-1$) is 
constructed,\footnote{Conjecturally, all $\M_{g}(\alpha)$ for $\alpha> 38/71$ are
of this form.} 
the variety $\T_{D_\ell}\subset \M_{g}$ 
will appear inside a strict transform of the $D_\ell$ locus
under the rational map $\M_{g}[A_k, D_\ell]\dashrightarrow \M_g$. 



\subsection{Statement of results}
In this paper, we introduce and study the moduli stacks $\H_{n}[k]$ 
and $\H_{n}[k,\ell]$ of  
{\em quasi-admissible hyperelliptic covers}. We postpone the precise definition to 
Section \ref{S:quasi-admissible-covers} and 
proceed to describe quasi-admissible covers informally. Briefly, the quasi-admissible hyperelliptic covers generalize both the
 {\em admissible covers} of Harris and Mumford \cite{harris-mumford} and the {\em twisted covers} of Abramovich, Corti, and Vistoli \cite{ACV}. 
 Namely,  a quasi-admissible hyperelliptic cover of genus $g$ is a degree $2$ map 
 $
 \varphi\co C\ra R
 $
 such that 
 \begin{enumerate}
 \item $C$ is a curve of arithmetic genus $g$,
 \item $R$ is a tree of pointed rational curves, where
 \item the marked points on $R$ are the branch points of $\varphi$. 
 \end{enumerate}
 By assigning weights to the branch points, we control
the singularities of $C$: Allowing $k$ branch points to collide introduces the 
$A_{k-1}$ singularity ($y^2=x^{k}$) on $C$. By forgetting 
the datum of $C$ and $\varphi$, we obtain a weighted pointed rational curve $R$. We require $R$, marked by the weighted branch divisor, to be stable (see Definition 
\ref{D:stable-div-marked}). 
Finally, in order to have a smooth stack of quasi-admissible covers, 
we never allow ramification over the nodes of $R$. As in the theory of twisted covers
\cite{ACV}, 
this is achieved by introducing an orbicurve structure at certain nodes of $R$ and 
nodes of $C$ lying over them.
The new feature of quasi-admissible covers, as compared to admissible and twisted covers, is that $C$ can have singularities over the smooth locus of $R$. In particular,
on the moduli stack there is a boundary divisor $\delta_{\irr}$ defined as the closure of singular double covers of $\PP^1$ (these arise when two branch points come together) and there is a boundary
divisor $\delta_{\red}$ parameterizing covers with reducible $R$.
\begin{figure}[htb]\label{F:quasi-admissible-cover}
\begin{centering}
\begin{tikzpicture}[scale=0.7]
	\node [style= black] (0) at (0.00, 1.25) {};
	\node [style= black] (1) at (0.00, 1.00) {};
	\node [style= black] (1a) at (-0.25, 1.10) {};
	\node [style= black] (1b) at (0.25, 1.10) {};
	\node [style= black] (2) at (-2.00, 0.50) {};
		\node [style= black] (2) at (-2.25, 0.60) {{\SMALL $A_3: y^2=x^4$}};
	\node [style= black] (3) at (1.00, 0.50) {};
	\node [style= black] (3) at (1.25, 0.75) {};
	\node [style= black] (4) at (2.00, 0.50) {};
	\node [style= black] (5) at (-3.00, 0.25) {};
	\node [style= black] (6) at (-4.00, 0.00) {};
	\node [style= black] (7) at (0.00, 0.00) {};
	\node [style= black] (7a) at (0.25, -0.10) {};
	\node [style= black] (7b) at (-0.25, -.10) {};
	\node [style= black] (8) at (4.00, 0.00) {};
	\node [style= black] (9) at (0.00, -0.25) {};
	\node [style= black] (10) at (2.00, -0.50) {};
	\node [style= black] (11) at (4.00, -0.50) {};
	\node [style= black] (12) at (4.25, -0.50) {};
	\node [style= black] (12) at (4.50, -1) {{\SMALL $A_2: y^2=x^3$}};
	\node [style= black] (13) at (-3.00, -0.75) {};
	\node [style= black] (14) at (-4.00, -1.00) {};
	\node [style= black] (C) at (-4.50, -1.0) [label=above:$C$] {}; 
	\node [style= black] (15) at (4.00, -1.00) {};
	\node [style= black] (16) at (0.00, -2.00) {};
	\node [style= black] (17) at (-1.50, -2.375) [label=below:$4$]{};
	\draw[fill] (17) circle (4pt);
	\node [style= black] (18) at (1.00, -2.25) [label=below:$2$]{};
	\draw[fill] (18) circle (2pt);
	\node [style= black] (19) at (-4.00, -3.00) {}; 
	\node [style= black] (R) at (-4.50, -2.50) [label=below:$R$] {};
	\draw  [very thick, ->]  (C.center) to (R.center);
	\node [style=black] (f) at (-4.25, -1.75) [label=left:$\varphi$] {};
	\node [style= black] (20) at (4.00, -3.00) [label=below:$3$]{};
	\node [style= black] (branch) at (3.00, -2.75) [label=below:$1$] {}; 
	\draw[fill] (branch) circle (1.5pt);
	\draw[fill] (20) circle (3pt);
	\draw [very thick, out=10, in=165] (4.center) to (11.center); 
	\draw [very thick] (1a.center) to (10.center);
	\draw [very thick, out=210, in=0, looseness=1.2] (1b.center) to (6.center);
	\draw [very thick] (19.center) to (16.center);
	\draw [very thick] (7b.center) to (4.center);
	\draw [very thick] (16.center) to (20.center);
	\draw [very thick, out=50, in=150, looseness=0.8] (14.center) to (7a.center); 
	\draw [very thick, out=165, in=-30] (11.center) to (10.center); 
\end{tikzpicture} 
\end{centering}
\caption{A reducible quasi-admissible hyperelliptic 
cover of genus $4$ with $A_{3}$ and 
$A_{4}$ singularities. Numbers indicate 
multiplicities of the branch divisor.}
\end{figure}
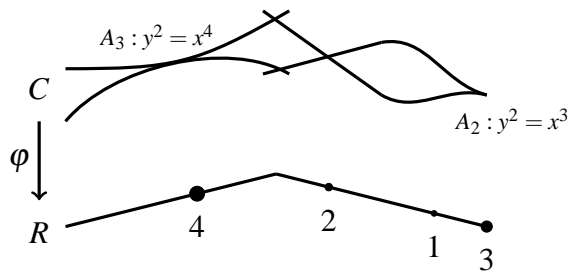

Finally, a few words on how to obtain quasi-admissible covers with $D$ singularities. 
For this, we consider a $1$\nb-pointed variant of quasi-admissible covers: This is done by introducing a marked point $\chi\in C$. 
Whenever $\chi$ coalesces with an $A_{k}$ singularity on $C$,
a $D_{k+1}$ singularity appears. The replacement procedure is described in more detail in Section \ref{S:D-singularities}, where 
the equivalence of deformations of a $D_{k+1}$ singularity and an $A_k$ singularity with a section is established.
For now we only note that our quasi-admissible covers can have at most one $D$ singularity. The reason for this is that a small deformation of a $D$ singularity 
has at most one singularity of type $D$. 
With the replacement procedure of Section \ref{S:D-singularities} in mind, we say that a quasi-admissible cover has a $D_1$ (resp., $D_2$) singularity if 
$\chi$ lies over a branch point of $\varphi$ (resp., a double branch point of $\varphi$).
\begin{main-theorem}[$A_n$ case]\label{main1} Let $n\geq 2$ be an integer.
\begin{enumerate}
\item
For each $k=1,\dots, n-1$,
there exists a smooth and proper Deligne-Mumford stack $\H_n[k]$ representing the functor of quasi-admissible hyperelliptic covers 
with at worst $A_k$ singularities. 
\item There is an isomorphism $\T_{A_n}\simeq \H_n[1]$.
\item There is a sequence of divisorial contractions 
\begin{align*}
\H_n[1]\ra \H_n[2]\ra \dots \ra \H_n[n-1].
\end{align*}
\item There is an isomorphism 
\begin{align*}
\H_n[n-1]\simeq 
\begin{cases} \P(2,3,\dots, n+1),\text{ if $n$ is odd}, \\ 
\P(4,6,\dots, 2n+2),\text{ if $n$ is even.}
\end{cases}
\end{align*}
\item For any $\alpha\in  \left(\frac{1}{2}+\frac{1}{k+2}, \frac{1}{2}+\frac{1}{k+1}\right] \cap \QQ\, $, the coarse moduli space of $\H_{n}[k]$ is
\[
H_{n}(k)=\proj R(\H_n[1], K_{\H_n[1]}+\alpha\delta_\irr+\delta_\red)
\]
where $\delta_{\irr}$, resp. $\delta_{\red}$, 
is the Cartier divisor of irreducible, resp. reducible, singular covers.
\end{enumerate}
\end{main-theorem}
\begin{remark} Note that the threshold value of $\alpha$ at which $$\proj R(\H_n[1], K_{\H_n[1]}+\alpha\delta_\irr+\delta_\red)$$
becomes the coarse moduli space of quasi-admissible covers with at worst $A_k$ singularities is precisely 
\[
\alpha=\lct(A_k)=\frac{1}{2}+\frac{1}{k+1},
\]
where $\lct(A_k)$ is the log canonical threshold of $A_k$ (see Section \ref{S:lct}).
\end{remark}
\begin{main-theorem}[$D_n$ case]\label{main2} Let $n\geq 4$ be an integer.
\begin{enumerate}
\item
For each $1\leq k\leq n-1$ and $1\leq \ell \leq \min\{k+1, n-1\}$,
there exists a smooth and proper Deligne-Mumford stack $\H_n[k,\ell]$ representing the functor of quasi-admissible $1$\nb-pointed
hyperelliptic covers 
with at worst $A_k$ and $D_\ell$ singularities.
\item There is an isomorphism $\T_{D_n}\simeq \H_n[1,2]$.
\item There are divisorial contractions 
{\small
\begin{align*}
\xymatrix{
\H_{n}[1,1] \ar[d]   \ar[r]  & \H_{n}[1,2] \ar[d]\\
\H_{n}[2,1] \ar[r] \ar[d] & \H_{n}[2,2]  \ar[d]  \ar[r] & \H_{n}[2,3] \ar[d]  \\
\cdots \ar[r] \ar[d] & \cdots \ar[r] \ar[d] & \cdots \ar[d]  \ar[r] & \cdots\ar[d]\\
\H_{n}[n-2,1] \ar[r] \ar[d] & \H_{n}[n-2,2] \ar[r] \ar[d] & \cdots \ar[d]\ar[r] & \cdots \ar[r] \ar[d]& \H_{n}[n-2,n-1] \ar[d] & \\
\H_{n}[n-1,1] \ar[r]  & \H_{n}[n-1,2] \ar[r] & \cdots \ar[r] & \H_{n}[n-1,n-2] \ar[r] 
& \H_{n}[n-1,n-1]  &
}
\end{align*}
}
\item There is an isomorphism 
\[
\H_{n}[n-1,n-1] =\begin{cases} \P(\frac{n}{2}, 1,2,3,\dots,n-1), &\text{ if $n$ is even,}\\
 \P(n, 2,4,6,\dots,2n-2), &\text{ if $n$ is odd}.
 \end{cases}
\]
\item For any $\alpha\in  \left(\frac{1}{k+2}, \frac{1}{k+1}\right] \cap \QQ$ and
$\beta\in \left(1-(\ell+1)\alpha, 1-\ell \alpha\right]\cap \QQ$, 
\[
H_{n}[k,\ell]=\proj R(\H_n[1,1], K_{\H_n[1,1]}+(\alpha+1/2)\delta_\irr+(2\alpha+2\beta-1)\delta_W+\delta_\red),
\]
where $\delta_{\irr}$, resp. $\delta_{\red}$, is the Cartier divisor
of irreducible, resp. reducible, singular covers, and $\delta_{W}$ is 
the {\em Weierstrass
divisor} of covers with a marked ramification point.
\end{enumerate}
\end{main-theorem}

We note that $A$ and $D$ singularities come equipped with a $\GG_m$ 
action and so do their versal
deformation spaces (see Pinkham \cite{pinkham} for a systematic theory
of singularities with $\GG_{m}$ action).
Parts (3) of Main Theorems \ref{main1} and \ref{main2} 
give a description of $\T_{A_n}$ and $\T_{D_n}$ 
as an iterated weighted blow-up of weighted projective spaces and 
Parts (4) identify these weighted projective spaces with 
quotient stacks $\left[\Def(A_n)\smallsetminus \mathbf{0} \ /\  \GG_m\right]$ 
and $\left[\Def(D_n)\smallsetminus \mathbf{0} \ /\  \GG_m\right]$.
Parts (1) provide a functorial interpretation of the intermediate spaces as moduli 
stacks of quasi-admissible hyperelliptic covers with 
$A$ and $D$ singularities. 


\subsubsection*{Roadmap of the proof:} The moduli stacks
$\H_n[k]$ and $\H_{n}[k,\ell]$ of Parts (1) of Main Theorems are
defined in Section \ref{S:quasi-admissible-covers}.
They are shown to be smooth and proper Deligne-Mumford stacks
in Theorems \ref{T:DM} and \ref{T:H-smooth-stack}. The 
morphisms of Parts (3) of Main Theorems are
constructed in Section \ref{S:reduction}. Parts (5) of Main Theorems are
dealt with in Section \ref{S:logMMP} (Theorems \ref{T:logMMP-A} and
\ref{T:logMMP-D}). The proofs of Parts (4)  are contained in Example 
\ref{E:weighted-projective-spaces}. We note that the moduli 
space $\H_{n}[n-1]$ is also studied in \cite{JFD}, where another proof of Part (4) of Main Theorem \ref{main1} can be found. The proof there is deduced from a
global quotient representation of
$\H_{n}[n-1]$, valid in arbitrary characteristic, given in \cite[Theorem 4.1]{arsie-vistoli}. 
Finally, Parts (2) of Main Theorems follow from definitions.

\subsection{Notation and conventions}\label{S:notations}
\subsubsection{The base field.} We work throughout over an algebraically closed field 
$\mathbb{K}$ of characteristic $0$.
The characteristic $0$ assumption is used in an essential way at several points of this work. 
In particular, the stacks $\H_n[k]$ and $\H_n[k,\ell]$ of Main Theorems 
\ref{main1} and \ref{main2}  
are smooth Deligne-Mumford stacks only under the characteristic $0$ assumption,
as Example \ref{E:infinite-automorphisms} shows.
However, for a fixed integer $n$, the statements of Main Theorems
\ref{main1} and \ref{main2} remain valid in characteristic $p$ as long as $p> n+1$.
\subsubsection{Curves and their singularities}\label{S:curves-definitions}
Unless specified otherwise, a {\em curve} 
is a connected reduced finite type scheme of dimension one over an algebraically closed field. 
Recall that a curve (singularity) $C$ 
is {\em smoothable} if there is a flat family $f\co \C\ra T$
with $f^{-1}(0)\simeq C$ and $f^{-1}(t)$ smooth for $t\neq 0$. 
All planar curve singularities are smoothable and moreover 
have nonsingular deformation
spaces -- these are the only curve singularities encountered in this paper. 

A singularity of {\em type $A_{n}$} is analytically isomorphic to 
$y^{2}-x^{n+1}=0$ at $(0,0)$. A singularity of {\em type $D_{n}$} 
is analytically isomorphic to 
$x(y^{2}-x^{n-2})=0$ at $(0,0)$. We note that $A_3\simeq D_3$.

By a theorem of Arnold (see \cite{arnold, arnold-inventiones, AGLV}), 
the $A$ and $D$
singularities, together with three exceptional singularity 
types $E_{6}, E_{7}$ and $E_{8}$, are the only 
{\em simple} hypersurface singularities (in every dimension). 
A singularity is called simple if it admits no nontrivial equisingular deformation, 
or, equivalently, if it has no {\em moduli}. 
From the point of view of moduli theory of curves, these are the simplest classes of singularities 
to allow with a hope of obtaining an open substack of the stack of all reduced curves. 

When $C$ is a proper curve, we use $\Def(C)$ to denote the (uni)versal deformation space of $C$, and $\Delta_{C}$ to denote the {\em discriminant} -- the locus 
of singular deformations -- inside $\Def(C)$. When $p\in C$ is a
singular point, we use $\Def(\hat{\O}_{C,p})$ to denote the versal deformation space of the singularity.
It is a standard fact (see, e.g., \cite{tjurina-versal} or \cite[Chapter 3.1]{sernesi}) that the versal deformation space of an isolated 
hypersurface singularity defined by $f(x_1,\dots,x_n)=0$ around $(0,\dots, 0)$ is the 
finite-dimensional $\mathbb{K}$\nb-vector space 
underlying the {\em Tjurina algebra}
\[
\mathbb{K}[x_1,x_2,\dots,x_n]/\left(f, \partial f/\partial x_1, \dots, \partial f/\partial x_n\right).
\]
We record versal deformations of singularities of types $A$ and $D$ and
the corresponding $\GG_{m}$ actions. 
We have $\Def(A_n)=\spec \KK[a_{0},\dots, a_{n-1}]$ and the 
versal deformation is given by
\begin{equation*}\label{E:A-versal}
y^2-(x^{n+1}+a_{n-1}x^{n-1}+\dots+a_0)=0.
\end{equation*}
The $\GG_m$ action on $\Def(A_n)$ and the versal family over it, extending 
a natural $\GG_m$ action on $y^2-x^{n+1}=0$, is given by 
\begin{equation}\label{E:A-action}
\begin{aligned}
\lambda\cdot (x,y, a_{n-1}, \dots, a_0) &=(\lambda^{2}x, \lambda^{n+1}y, \lambda^{4}a_{n-1},\dots, \lambda^{2(n+1)}a_0) \quad \text{if $n$ is even,} \\
\lambda\cdot (x,y, a_{n-1}, \dots, a_0) &=(\lambda x, \lambda^{\frac{n+1}{2}}y, \lambda^{2}a_{n-1},\dots, \lambda^{n+1}a_0) \quad \text{if $n$ is odd.}
\end{aligned}
\end{equation}
We have $\Def(D_n)=\spec \KK[b, a_{0},\dots, a_{n-2}]$ and the versal deformation is given by
\begin{equation*}
xy^2+by-(x^{n-1}+a_{n-2}x^{n-2}+\dots+a_0)=0.
\end{equation*}
The $\GG_m$ action on $\Def(D_n)$ and the versal family over it, extending 
a natural $\GG_m$ action on $x(y^2-x^{n-2})=0$, is given by
\begin{equation}\label{E:D-action}
\begin{aligned}
\lambda\cdot (x,y, b, a_{n-1}, \dots, a_0) &=(\lambda x, \lambda^{\frac{n-2}{2}}y, \lambda^{\frac{n}{2}} b, \lambda a_{n-2},\dots, \lambda^{n-1}a_0) \quad \text{if $n$ is even,} \\
\lambda\cdot (x,y, b, a_{n-1}, \dots, a_0) &=(\lambda^2 x, \lambda^{n-2}y, \lambda^{n} b, \lambda^2 a_{n-2},\dots, \lambda^{2n-2}a_0) \quad \text{if $n$ is odd.}
\end{aligned}
\end{equation}

\subsubsection{Log canonical thresholds}\label{S:lct}
A  {\em log canonical threshold} of a hypersurface $D\subset \mathbb{K}^n$ quantifies how
far the hypersurface is from being a simple normal crossing divisor. We refer the reader to
\cite[Section 8]{singularities-pairs} for precise definition. Here, we only record the log canonical thresholds of $A_{n}$ and $D_{n}$ singularities:
\begin{align*}
\lct(A_{n}) &=\frac{n+3}{2(n+1)}=\frac{1}{2}+\frac{1}{n+1}, \\ 
\lct(D_{n}) &=\frac{n}{2(n-1)}=\frac{1}{2}+\frac{1}{2(n-1)}.
\end{align*}
An amusing fact is that the above thresholds are related to the geometry of the discriminant 
hypersurface inside the versal deformation space of the curve singularity.
Namely, let $p\in C\subset \mathbb{K}^{2}$ be a singularity of type $A$ or $D$. Let $\Delta$ be the discriminant hypersurface 
inside the versal deformation space $\Def(\hat{\O}_{C,p})$. We then have
\begin{align*}
\lct(\Delta, \Def(\hat{\O}_{C,p}))=\lct(C, \mathbb{K}^2).
\end{align*}

\subsubsection{Conventions} An {\em Artin stack} is a stack that has a separated 
representable diagonal of finite type 
(i.e., $\Isom$ functors are represented by separated algebraic spaces of finite type) 
and that admits a representable smooth surjective morphism from 
a scheme; a {\em Deligne-Mumford stack} is an Artin stack
with an unramified diagonal (i.e., objects have no infinitesimal automorphisms). 
 If $\X$ is a Deligne-Mumford stack,
we denote by $X$ its coarse moduli space. 
An {\em orbicurve} is a Deligne-Mumford stack of finite type over an algebraically 
closed field
whose coarse moduli space is a curve, and such that the generic stabilizer
of every irreducible 
component is trivial. 
Unless specified otherwise, a {\em family} 
is a flat family of schemes or orbicurves.


\subsubsection{Notation.} The symmetric group on $d$ letters is denoted by 
$\S_{d}$; the cyclic 
group of order $r$ by $\mu_r$. If $D$ is a $\QQ$\nb-Cartier divisor on 
$X$, we set 
$$R(X,D):=\bigoplus_{m\geq 0} \HH^0(X, \lfloor mD\rfloor).$$
The category of schemes over 
$\spec A$ is denoted $\Sch_A$. When 
$\GG_m$ acts on $\mathbb{K}^{n+1}$ diagonally with weights $a_0,\dots,a_n$,
we denote by 
$\P(a_0,\dots,a_n)$ the {\em weighted projective stack} 
$
\left[ \mathbb{K}^{n+1} \smallsetminus \mathbf{0}\ / \ \GG_m \right]
$. The coarse moduli space of $\P(a_0,\dots,a_n)$ is
$\PP(a_0,\dots,a_n)$.


\subsection{Outline of the paper} In Section \ref{S:moduli-spaces}, we discuss {\em divisorially marked rational curves}, analogues of weighted pointed curves of
\cite{Hweights}, and the notion of $\W$\nb-stability for them, an analogue of $\A$\nb-stability. The second part of the section deals with {\em even rational 
orbicurves} -- divisorially marked rational curves endowed with the minimum stack structure allowing for 
existence of a square root of the marking divisor. In Section \ref{S:quasi-admissible-covers}, we introduce (pointed) quasi-admissible 
hyperelliptic covers with at worst $A$ (and $D$) singularities 
and discuss the notion of $\W$\nb-stability for them. Here, we prove that stacks $\H_{n}[k]$ and $\H_{n}[k,\ell]$ of (pointed) quasi-admissible covers are smooth and proper Deligne-Mumford stacks, and show that their local geometry closely reflects the geometry of versal deformation spaces of $A$ and $D$ singularities. 
Section \ref{S:logMMP} is devoted to the study of certain log canonical divisors on 
$\H_{n}[k]$ and $\H_{n}[k,\ell]$. In particular, we show that the natural reduction 
morphisms constructed in Section \ref{S:reduction} are, on the level of coarse 
moduli spaces, maps between log canonical models 
of $\Tl{A_{n}}$ and $\Tl{D_{n}}$. Finally, in Section \ref{S:final}, we discuss the necessity of working with Deligne-Mumford stacks of 
orbicurves and give an application of Main Theorems \ref{main1} and \ref{main2}
related to a recent work of \cite{radu-yano}.


\subsection*{Acknowledgements} We would like to thank Jarod Alper, Brendan Hassett, Aise Johan de Jong, and David Smyth for helpful discussions. We are also grateful to the organizers of several seminars in which this work was presented before the manuscript was finished. We also thank Sebastian Casalaina-Martin and Radu Laza who shared the preliminary version of \cite{radu-yano} and encouraged the inclusion of Section \ref{S:final}.

\section{Divisorially marked rational curves and even rational orbicurves}
\label{S:moduli-spaces}

\subsection{Divisorially marked rational curves}
We begin by summarizing the theory of pointed and divisorially marked rational curves.
Throughout, a {\em semistable rational curve} will be a
proper, connected, (at worst) nodal curve of arithmetic genus $0$.
A {\em divisorially marked rational curve} 
is a semistable rational curve $C$ together with divisors $D_i$ of degree $d_i$ that are disjoint from $\Sing(C)$.
Given two $(n+1)$-tuples, $(d_0, d_1,\dots, d_n)\in \NN^{n+1}$ and $(w_0, w_1, \dots, w_n)\in \QQ^{n+1}\cap(0,1]^{n+1}$, we call 
the datum $\W:=(w_0^{d_0},w_1^{d_1}, \dots,w_n^{d_n})$ a {\em weight vector} 
(we also write $w_{i}^{1}$ as $w_{i}$). 
\begin{definition}\label{D:stable-div-marked} Given a weight vector $\W$, we say that a divisorially marked rational curve $(C; D_0,\dots, D_n)$ is {\em $\W$\nb-stable} if the following conditions hold:
\begin{enumerate}
\item For every $p\in C$, $\mult_p \sum\limits_{i=0}^n w_i D_i\leq 1$.
\item The line bundle $\omega_C\bigl(\sum\limits_{i=0}^n w_i D_i\bigr)$ is ample.
\end{enumerate}
\end{definition}
Consider the {\em stack of divisorially marked curves} $\Mdiv{} \ra \Sch_{\mathbb{K}}$ whose objects are families 
$\pi\co (\C; D_0, D_1, \dots, D_n) \ra T$
such that the geometric fibers of $\pi$ are divisorially marked rational curves. 
To see that $\Mdiv{}$ is a stack, note that it is a category fibered in grupoids, that 
the $\Isom$ functors are
sheaves in the \'{e}tale topology, 
and that the \'{e}tale descent for objects is effective.

\begin{definition}\label{D:W-stable} Let $\Mdiv{\W}$ be the full subcategory of $\Mdiv{}$ 
consisting of families
whose geometric fibers are $\W$\nb-stable divisorially marked rational curves. 
We call $\Mdiv{\W}$ the {\em stack of rational $\W$\nb-stable curves.}
\end{definition}
The stack $\Mdiv{\W}$ is closely related to the moduli space
 of weighted pointed rational curves introduced by Hassett in \cite{Hweights}. 
We recall that for $\A=(a_1,\dots, a_n)\in (0,1]^n\cap \QQ^n$, 
a proper, connected, (at worst) nodal rational curve $(C; p_1,\dots, p_n)$ is $\A$-stable if it satisfies:
\begin{enumerate}
\item $p_i$ are smooth points of $C$, and if $p_{i_1}, \ldots, p_{i_k}$ coincide in $C$, then $\sum_{j=1}^{k}a_{i_j} \leq 1$.
\item $\omega_{C}(\sum\limits_{i=1}^n a_ip_i)$ is ample.
\end{enumerate}
The moduli functor of $\A$\nb-stable curves is represented by a smooth projective scheme $\Mg{0,\A}$ \cite[Theorem 2.1]{Hweights}. 
The relation between $\Mdiv{\W}$ and $\Mg{0,\A}$ is summarized in the following proposition.
\begin{prop}\label{P:DivStack-X} The category $\Mdiv{\W}$ is a smooth and proper Deligne-Mumford stack over $\mathbb{K}$. 
Its coarse moduli space $\CMdiv{\W}$ is
a quotient of $\Mg{0,\A}$, 
where {\small $\A=\bigl(\underbrace{w_0,\dots,w_0}_{d_0},\dots,\underbrace{w_n,\dots,w_n}_{d_n}\bigr)$}, by the action of $\S:=\S_{d_0}\times \cdots \times\S_{d_n}$. 
\end{prop}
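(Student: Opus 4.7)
The plan is to verify smoothness, properness, and the Deligne-Mumford property of $\Mdiv{\W}$ directly from the moduli description, and then to identify the coarse moduli space by constructing and analyzing a natural morphism from $\Mg{0,\A}$.

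For smoothness, I would use deformation theory. Deformations of a $\W$-stable curve $(C; D_0, \ldots, D_n)$ factor as deformations of the underlying nodal rational curve $C$ (which are unobstructed, since $\Ext^2(\Omega_C, \O_C) = 0$ for curves) together with deformations of the $D_i$ as effective Cartier divisors inside the deforming smooth locus. The latter are parametrized by the smooth relative schemes $\Sym^{d_i}(\C^{\smooth}/T)$, so the joint deformation space is smooth. For the Deligne-Mumford property, every automorphism of $(C; D_0, \ldots, D_n)$ preserves the effective $\QQ$-divisor $\sum w_i D_i$ and hence sits inside the automorphism group of the polarized proper curve $\bigl(C, \omega_C(\sum w_i D_i)\bigr)$, which is finite by ampleness.

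Properness I would check via the valuative criterion. Starting from a $\W$-stable family over the punctured spectrum of a DVR $R$, a finite base change $R \to R'$ allows me to split each $D_i$ over $\mathrm{Frac}(R')$ into $d_i$ labeled sections, producing an $\A$-stable pointed family. Hassett's properness of $\Mg{0,\A}$ \cite[Theorem 2.1]{Hweights} then extends this uniquely over $\spec R'$, and re-aggregating sections into divisors yields the desired $\W$-stable extension; Galois invariance of the aggregation map descends the extension to $\spec R$.

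For the coarse moduli identification, I would construct the morphism $\Phi \co \Mg{0,\A} \to \Mdiv{\W}$ sending a labeled pointed curve $(C; p_1, \ldots, p_N)$ with $N = \sum d_i$ to $(C; D_0, \ldots, D_n)$, where $D_i = \sum_{j \in I_i} p_j$ for $I_i$ the $i$-th weight block of $\A$. The morphism $\Phi$ is $\S$-invariant, and on geometric points it is surjective with fibers equal to $\S$-orbits, since any two orderings of a given divisorial configuration differ by an element of $\S$. The induced morphism $\Mg{0,\A}/\S \to \CMdiv{\W}$ between normal proper varieties is then bijective on geometric points, hence an isomorphism.

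The principal subtlety I anticipate is that a splitting of $D_i$ into labeled sections exists only after a ramified base change whenever $D_i$ is non-reduced; so $\Phi$ is not étale along such loci and $\Mdiv{\W}$ is not literally the quotient stack $[\Mg{0,\A}/\S]$, but rather a rigidification of it that remembers divisors without recording labelings of their multiplicities. This does not affect the coarse moduli identification but has to be handled with care in the valuative criterion for properness.
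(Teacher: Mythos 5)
Your proposal is mostly sound, but it skips the step that occupies the first half of the paper's proof: showing that $\Mdiv{\W}$ is an \emph{algebraic} stack at all. Smoothness, properness, and the Deligne--Mumford property are assertions about an Artin stack, so before deformation theory can be brought to bear you must exhibit a representable diagonal and a smooth surjective cover of $\Mdiv{\W}$ by a scheme of finite type. The paper does this by mapping $\Mdiv{\W}$ to the Artin stack $\Rat$ of semistable rational curves and identifying the fiber product $\Mdiv{\W}\times_{\Rat}T$, for a smooth atlas $T\ra\Rat$, with an open subscheme of the relative Hilbert scheme $\Hilb_{\C_T/T}$; your ``smooth relative schemes $\Sym^{d_i}$'' remark gestures at the same structure but is not a substitute for the construction of an atlas. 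Once algebraicity is in place, your deformation-theoretic argument for smoothness matches the paper's Proposition \ref{P:deformation-divisors}. For the Deligne--Mumford property your route (finite automorphism group of the polarized curve, hence reduced in characteristic $0$, hence unramified diagonal) is workable but less direct than the paper's explicit computation $\Hom(\Omega^1_C,\O_C(-D))\simeq \HH^0(C,(\omega_C(D))^{-1})=0$; both use characteristic $0$ essentially, as Example \ref{E:infinite-automorphisms} shows they must.

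Your treatment of properness is a genuinely different route from the paper's. You run the valuative criterion directly, splitting each $D_i$ into labeled sections after a finite base change, invoking properness of $\Mg{0,\A}$, and re-aggregating; this works (coincident sections handle non-reduced $D_i$, and $\A$-stability of the limit translates exactly into $\W$-stability), though you should be explicit that the valuative criterion for stacks permits the finite extension, so no descent back to $R$ is actually required for existence, and that uniqueness of the limit must also be checked. The paper instead deduces properness from the Keel--Mori theorem: the coarse space exists, the coarse map is proper, and the coarse space is the proper scheme $\Mg{0,\A}/\S$. Your approach buys a self-contained argument independent of the coarse-space identification; the paper's is shorter because it gets properness for free once the coarse space is identified. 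Your identification of $R_{\W}$ with $\Mg{0,\A}/\S$ via the $\S$-invariant morphism $\Phi$ is more detailed than the paper's one-line assertion, and your closing caveat that $\Mdiv{\W}$ is not the quotient stack $\left[\Mg{0,\A}/\S\right]$ correctly anticipates the discussion in Section \ref{S:different-marked-curves}.
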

\begin{proof}
Consider the forgetful $1$\nb-morphism $\Mdiv{\W}\ra \Rat$ to the category 
of semistable rational curves with at most $1+\sum_{i=0}^{n}d_i$ nodes. 
Note that $\Rat$ is an Artin stack of finite type over $\mathbb{K}$
 by, e.g., \cite[Proposition 1.10]{Fulg1}. Take $T\ra \Rat$ to be a smooth surjective 
morphism from a separated scheme $T$ of finite type over $\mathbb{K}$. 
This morphism induces a family $\C_T\ra T$ 
of semistable rational curves.
The fiber product $\Mdiv{\W}\times_{\Rat}T$ is the category whose objects over a $T$-scheme $S$ 
are $(n+1)$-tuples of $S$\nb-flat Cartier divisors 
$\D_0,\dots, \D_n$ on $\C_T\times_T S$. 
The morphisms in $\Mdiv{\W}\times_{\Rat}T$ are obvious Cartesian diagrams 
and the isomorphisms in $\Mdiv{\W}\times_{\Rat}T$
are equalities on the nose.
It follows that $\Mdiv{\W}\times_{\Rat}T$ is an open subscheme of $\Hilb_{\C_T/T}$ -- 
the relative Hilbert scheme of $\C_T\ra T$. Since $\C_T$
is a separated algebraic space of finite type over $T$, $\Hilb_{\C_T/T}$ 
is also an algebraic space of finite type over $T$ by \cite{rydh}, 
and so admits an \'{e}tale surjective morphism from a finite type scheme. 
Composing this morphism 
with a smooth surjective morphism $\Mdiv{\W}\times_{\Rat}T \ra \Mdiv{\W}$, we obtain 
a smooth surjective cover of $\Mdiv{\W}$ by a scheme of finite type over $\mathbb{K}$.
Thus, $\Mdiv{\W}$ is an Artin stack of finite type over $\mathbb{K}$.

To show that $\Mdiv{\W}$ is a smooth Deligne-Mumford stack it remains to show that for any $\W$\nb-stable curve defined 
over a field, infinitesimal automorphisms and obstructions vanish.
This follows from the well-known deformation-theoretic result of Proposition \ref{P:deformation-divisors} below.

Since $\Mdiv{\W}$ is a Deligne-Mumford stack of finite type over $\mathbb{K}$, 
the coarse moduli space exists by \cite[Corollary 1.3]{keel-mori} and the coarse moduli map 
is proper \cite[Theorem 3.1(1)]{conrad}. 
It is easy to see that the coarse moduli space is in fact isomorphic 
to the scheme-theoretic quotient $\Mg{0,\A}/ \S$, which is a proper
scheme. 
Thus, $\Mdiv{\W}$ is proper.
\end{proof}
\begin{prop}\label{P:deformation-divisors}
Let $(C; D_{0},\dots, D_{n})$ be a $\W$-stable rational curve over $\mathbb{K}$. 
Then $(C; D_{0},\dots, D_{n})$ is unobstructed and has no infinitesimal automorphisms.
\end{prop}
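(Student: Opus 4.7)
My plan is to treat the two assertions separately: first the vanishing of infinitesimal automorphisms, then the smoothness of the deformation functor $\Def(C; D_0, \ldots, D_n)$.

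For the automorphism vanishing, I would begin by identifying
$$
\mathrm{Aut}_{\KK[\epsilon]}(C; D_0, \ldots, D_n) = H^0(C, T_C(-\Sigma)),
$$
where $\Sigma \subset C$ is the reduced support of $\bigcup_i \Supp(D_i)$. The key local computation is that at a smooth point $p$ with local coordinate $x$, a derivation $\xi = g(x)\partial_x$ preserves the Cartier ideal $(x^m)$ for any $m \ge 1$ if and only if $g(0)=0$, so preserving each $D_i$ as a Cartier divisor is equivalent to vanishing on its reduced support. I would then show $H^0(C, T_C(-\Sigma)) = 0$ by a degree calculation component by component: passing to the normalization, the restriction to each component $C_\alpha \simeq \PP^1$ computes as $T_{C_\alpha}(-N_\alpha - \Sigma_\alpha) \simeq \O_{\PP^1}(2 - |N_\alpha| - |\Sigma_\alpha|)$, where $N_\alpha$ is the preimage of the nodes of $C$ lying on $C_\alpha$ and $\Sigma_\alpha = \Sigma \cap C_\alpha$. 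So vanishing reduces to the integer inequality $|N_\alpha| + |\Sigma_\alpha| \ge 3$. Both parts of $\W$-stability enter here: ampleness of $\omega_C(\sum_i w_i D_i)$ restricted to $C_\alpha$ gives $|N_\alpha| - 2 + \sum_i w_i \deg(D_i|_{C_\alpha}) > 0$, while the multiplicity bound $\mult_p \sum_i w_i D_i \le 1$ gives $\sum_i w_i \deg(D_i|_{C_\alpha}) \le |\Sigma_\alpha|$; combining the two yields the required bound.

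For unobstructedness, the plan is to factor the forgetful morphism of deformation functors $\Def(C; D_0, \ldots, D_n) \to \Def(C)$ and argue smoothness of both the target and each fiber. The target is smooth because $C$ is nodal: $\Ext^2(\Omega_C, \O_C)$ vanishes by the local-global spectral sequence, since $\SExt^1(\Omega_C, \O_C)$ is a skyscraper supported on the nodes while $\SHom(\Omega_C, \O_C) = T_C$ is a coherent sheaf on a curve with no $H^2$. Over a deformation $\C / \spec R$ of $C$, the fiber of the forgetful morphism parameterizes tuples $(\D_0, \ldots, \D_n)$ of Cartier divisors of degrees $d_i$ in $\C$ extending the $D_i$ and disjoint from $\Sing(\C)$; since $\C^{\smooth}/R$ is smooth, this fiber is an open subscheme of $\prod_i \Sym^{d_i}(\C^{\smooth}/\spec R)$, which is smooth over $R$. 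Smoothness of $\Def(C; D_0, \ldots, D_n)$ then follows formally.

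The main obstacle, such as it is, is the passage from the rational ampleness inequality to the integer bound $|N_\alpha| + |\Sigma_\alpha| \ge 3$ via the multiplicity constraint; this is the one place where both halves of $\W$-stability must be used in tandem. Everything else is formal once one accepts the two standard inputs: smoothness of $\Def(C)$ for nodal $C$, and smoothness of symmetric powers of the smooth locus of a family of nodal curves.
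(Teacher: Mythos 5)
Your proof is correct and follows essentially the same route as the paper: infinitesimal automorphisms are identified (using characteristic $0$) with sections of the tangent sheaf twisted down by the reduced support of the marking and are killed by a positivity argument coming from both halves of $\W$\nb-stability, while unobstructedness reduces to smoothness of $\Def(C)$ for nodal $C$ together with smoothness of the space of divisors supported in the smooth locus. The only difference is that you carry out explicitly the component-by-component degree count that the paper delegates to the citation of \cite[Section 3.3]{Hweights}.
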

\begin{proof}
The first statement follows from the fact that $C$ is unobstructed and $D_i$ are 
Cartier divisors supported at smooth points of $C$. To prove the second statement, we note
that the infinitesimal automorphisms are elements $d\in \Hom(\Omega^{1}_{C}, \O_C)$ that 
satisfy $df\in (f)$ for any local equation $f$ of $D_0+\cdots+D_n$. 
In characteristic $0$, this shows that infinitesimal automorphisms are classified by 
$\Hom(\Omega^{1}_{C}, \O_C(-D))\simeq \HH^0(C, (\omega_{C}(D))^{-1})=(0)$. Here,
the first isomorphism and the vanishing statement is taken from \cite[Section 3.3]{Hweights}.
\end{proof}
We remark that by \cite[Section 3.3.2]{Hweights} the infinitesimal automorphisms of 
$(C; D_0,\dots, D_n)$ are classified by $\Ext^{1}(\Omega_{C}^1\langle D_0,\dots, D_n\rangle, \O_C)$,
where $\Omega_{C}^1\langle D_0,\dots, D_n\rangle$ is the sheaf of differentials on $C$ with 
logarithmic poles along $D_0,\dots, D_n$.
\begin{example}\label{E:infinite-automorphisms}
We remark that even a smooth divisorially marked rational curve $C$ 
over a field $\mathbb{K}$ of positive 
characteristic $p$ may have infinitesimal automorphisms. Indeed, if $\text{char}\ \mathbb{K} =p$, then
by the proof of Proposition \ref{P:deformation-divisors} above, the infinitesimal automorphism 
$x\mapsto x+\epsilon y$ of $\proj \mathbb{K}[x,y]$ extends to an automorphism of a closed immersion
given by \[
\mathbb{K}[x,y] \ra \mathbb{K}[x,y]/\left((x-a_1y)^{pd_1}(x-a_2y)^{pd_2}\dots(x-a_ny)^{pd_n}\right).
\]
\end{example}

\subsubsection{Divisorially marked vs. pointed curves}
\label{S:different-marked-curves}
Let $\A=\bigl(\underbrace{w_0,\dots,w_0}_{d_0},\dots,\underbrace{w_n,\dots,w_n}_{d_n}\bigr)$ and $\W=\bigl(w_0^{d_0},\dots, w_n^{d_n}\bigr)$.
Set $\S:=\S_{d_0}\times \cdots \times\S_{d_n}$. We have a sequence of $1$\nb-morphisms
\[ 
\Mg{0,\A} \ra \left[\Mg{0,\A} / \S\right] \ra \Mdiv{\W} \ra \CMdiv{\W}\simeq \Mg{0,\A}/\S,
\]
defined as follows: The leftmost arrow is the quotient map and the rightmost arrow is the coarse moduli map. To define the middle arrow, we note that 
an object of $\left[\Mg{0,\A} / \S\right]$ over a scheme $T$ is an $\S$-torsor $\{P\ra T\}$ 
together with an $\S$-equivariant morphism $P \ra \Mg{0,\A}$. The $1$-morphism 
$\left[\Mg{0,\A} / \S\right] \ra \Mdiv{\W}$ sends $\{P\ra T\}$ to the family 
of divisorially marked curves over $T$
\[
\left(P\times_{\Mg{0,\A}}\C_{\A}\right) /\S \ra P/\S=T,
\]
where the divisor $D_i$ on $\left(P\times_{\Mg{0,\A}}\C_{\A}\right) /\S$ is defined to be the image of 
weight $w_i$ sections of $P\times_{\Mg{0,\A}}\C_{\A}\ra P$.
 Note that the middle arrow $\left[\Mg{0,\A} / \S\right] \ra \Mdiv{\W}$ 
is not representable since $\Mdiv{\W}$ usually has smaller stabilizers, 
as we see in the following example.
\begin{example} Let $\A=\bigl(1, 1/3, 1/3, 1/3, 1/3\bigr)$ and $\W=\bigl(1, (1/3)^4\bigr)$. 
To describe $\Mg{0,\A}$, note that any $\A$\nb-stable curve has only one irreducible component, 
namely $\PP^1$. Next, we can assume that
the point of weight $1$ is always at $\infty$, and $4$ points of weight $1/3$ have coordinates
$x_1,x_2,x_3,x_4$ 
satisfying $x_1+x_2+x_3+x_4=0$ on the affine line 
$\spec S:=\spec \mathbb{K}[x_1,x_2,x_3,x_4]= \PP^1\smallsetminus \infty$. 
The stability assumption then translates into 
$(x_1, x_2, x_3, x_4)\neq (0,0,0,0)$. 
Since the subgroup of $\PGL_2$ preserving above choices is $\GG_m$, 
we see that $\Mg{0,\A}$ is the quotient stack
\[ 
\left[\spec S /(x_1+x_2+x_3+x_4)\smallsetminus \mathbf{0} \ / \ \GG_m\right] 
\simeq \proj  S /(x_1+x_2+x_3+x_4) \simeq \PP^2;
\]
here, the action of $\GG_m$ is given by the usual grading of $S$.
By the same logic, $\Mdiv{\W}$ can be identified with the quotient stack
\[
\left[\spec (S/(x_1+x_2+x_3+x_4))^{\S_4} \smallsetminus \mathbf{0} \ / \ \GG_m\right],
\] where 
$ (S/(x_1+x_2+x_3+x_4))^{\S_4}$ is the ring of invariants of $S/(x_1+x_2+x_3+x_4)$ under the action of $\S_4$. 
The generators of $(S/(x_1+x_2+x_3+x_4))^{\S_4}$ are elementary symmetric polynomials of degrees $2$, $3$ and $4$. It follows that $\Mdiv{\W}$ is the weighted projective stack 
$\P(2,3,4)$. Finally, $\left[\Mg{0,\A} / \S_4\right]$ is 
$\left[ \proj  S /(x_1+x_2+x_3+x_4) \ / \ \S_4\right]$, where $\S_4$ acts 
by permuting variables. Note that the point with $x_1=x_2=x_3=-x_4/3$
has stabilizer $\S_3$ in $\left[\Mg{0,\A} / \S\right]$ and maps to a point with 
a cyclic stabilizer $\mu_3$ in $\Mdiv{\W}$.
\end{example}

\subsubsection{Section at infinity} In all of the cases under consideration in this paper, 
the weight vector is of the form $\left(1, w_1^{d_1},\dots, 
w_n^{d_n}\right)$. In other words, all families of divisorially marked curves will always
carry a distinguished divisor 
of relative degree $1$ and weight $1$. Such a divisor defines a section, called 
{\em the section at infinity}. 

\subsubsection{Odd nodes and odd section at infinity} If $(C; D_0, D_1,\dots, D_n)$ 
is a divisorially marked rational curve, where $D_0$ is the section at 
infinity, we say that a node of $C$ is {\em odd} (resp., {\em even}) 
if it separates $C$ into two connected 
components $C_1$ and $C_2$ such that
$D_0$ lies on $C_1$ and such that the degree of the divisor $\sum_{i=1}^n D_i$ restricted to
 $C_2$ is odd (resp., even). Moreover, we say that the section at infinity 
is {\em odd} (resp., {\em even}) if the total degree $d_1+\cdots+d_n$ of $\sum_{i=1}^n D_i$ is
 odd (resp., even). The odd nodes and the odd section at infinity are referred to as {\em odd 
 points} of $C$. 

\subsection{Even rational orbicurves}
The definitions in this section are inspired by the notion of a twisted cover of \cite{ACV} (see also \cite{AV}). We use the gadget of orbicurves, to borrow a metaphor from \cite{ACV}, 
as a magnifying glass in which apparent singularities of moduli spaces disappear.

\begin{definition} An {\em even rational orbicurve} is a triple $(\Y; \tau, D)$, where 
\begin{enumerate}
\item $\Y$ is an orbicurve over an algebraically closed field $k$ with a coarse moduli space $Y$.
\item $Y$ is a semistable rational curve.
\item $\tau\co \spec k \ra \Y$ is a section, called {\em the section at infinity}, such that $\tau(\spec k)$ is a smooth point of $Y$.
\item $D$ is a divisor in the smooth locus of $Y$, disjoint from $\tau(\spec k)$. 
\item The points of $\Y$ with non-trivial stabilizers lie exactly over odd points of $Y$.
\item \'{E}tale locally over an odd node of $Y$, the orbicurve $\Y$ is isomorphic to $$\bigl[ \spec \slfrac{k[x,y]}{(xy)} \, / \mu_2 \bigr]$$
where the action of $\mu_2$ is given by $(x,y)\mapsto (-x,-y)$.
\item \'{E}tale locally over an odd section at infinity, the orbicurve $\Y$ is isomorphic to $$\bigl[ \spec k[x] \, / \mu_2 \bigr]$$
where the action of $\mu_2$ is given by $x\mapsto -x$.
\end{enumerate}
\end{definition}
\noindent
The points of $\Y$ with a non-trivial stabilizer ($\mu_{2}$) 
are called {\em odd points}, or {\em orbinodes}.
\begin{lemma} 
\label{L:root-bundle-unique}
Suppose that $(\Y; \tau, D)$ is an even rational orbicurve over an algebraically closed field. 
Then there is a unique $\L\in \Pic(\Y)$ satisfying $\L^{\otimes 2}\simeq \O_\Y(D)$.
Moreover, a character of $\mu_2$ at a point $p\in \Y$ is non-trivial if and only if $p$ is an odd 
point of $\Y$.
\end{lemma}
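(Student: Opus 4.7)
The plan is to handle existence and uniqueness of $\L$ by passing to the partial normalization $\nu\colon \tilde{\Y} \to \Y$ at the orbinodes. Each irreducible component $\tilde{\Y}_i$ is a $\PP^1$ equipped with $k_i$ smooth $\mu_2$-orbipoints — one for each odd node of $Y$ lying on $C_i$ and, if $\tau \in C_i$ is odd, one for $\tau$. The first step I would carry out is a \emph{parity lemma}: $k_i \equiv d_i \pmod{2}$, where $d_i := \deg(D|_{C_i})$. Removing $C_i$ splits $Y$ into subtrees $T_j$, one of which (or none) contains $\tau$; expressing the odd/even dichotomy for each adjoining node and for $\tau$ in terms of the degrees $\deg(D|_{T_j})$ gives a short $\FF_2$-linear calculation whose output is $k_i \equiv d_i \pmod{2}$ in all cases.

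For existence, I would construct on each $\tilde{\Y}_i$ a line bundle $\L_i$ of degree $d_i/2$ having non-trivial $\mu_2$-character at every orbipoint — explicitly,
\[
\L_i := \O_{\tilde{\Y}_i}\Bigl(E_1^{(i)} + \cdots + E_{k_i}^{(i)} + m_i q_i\Bigr), \qquad m_i := (d_i - k_i)/2 \in \ZZ,
\]
where $E_j^{(i)}$ is the reduced orbi-divisor at $p_j^{(i)}$ (so $2E_j^{(i)} = p_j^{(i)}$) and $q_i$ is any smooth point; by the parity lemma $m_i \in \ZZ$. Then $\L_i^{\otimes 2}$ has degree $d_i$ on $\PP^1$ and is therefore isomorphic to $\O_{\tilde{\Y}_i}(D|_{C_i})$. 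To glue the $\L_i$ across a node $n$ of $Y$, I observe that both sides have matching $\mu_2$-characters (trivial at even nodes, both non-trivial at orbinodes), so gluing isomorphisms form a $\GG_m$-torsor; requiring the squared gluing to agree with the canonical trivial gluing of $\O_\Y(D)$ near $n$ (recall $D$ is disjoint from all nodes) cuts this down to a $\pm 1$ choice. Since the dual graph of $Y$ is a tree, any sign pattern is a coboundary with respect to componentwise sign flips, so all choices yield the same global $\L$ up to isomorphism. By construction $\L$ has non-trivial $\mu_2$-character at every odd point of $\Y$, establishing the second assertion of the lemma.

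For uniqueness I must show $\Pic(\Y)[2] = 0$. Given a $2$-torsion line bundle $M$, its restriction to each $\tilde{\Y}_i$ lies in $\Pic(\tilde{\Y}_i)[2] \cong (\ZZ/2)^{\max(k_i - 1, 0)}$, and the character map $\Pic(\tilde{\Y}_i)[2] \hookrightarrow \FF_2^{k_i}$ is injective with image the even-weight subspace (each generator, of the form $[E_a^{(i)} - E_b^{(i)}]$, has non-trivial character at exactly two orbipoints). Hence the characters of $M|_{\tilde{\Y}_i}$ sum to $0 \in \FF_2$; and the gluing at each orbinode forces equality of the two characters on the two branches. Writing $\chi\colon \{\text{odd points of }\Y\} \to \FF_2$ for the resulting well-defined character function, summing the per-component parity equations yields $[\tau \text{ odd}]\,\chi(\tau) = 0$, so $\chi(\tau) = 0$ whenever $\tau$ is odd. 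The remaining edge equations say $\chi$ is a $1$-cycle for the $\FF_2$-chain complex of the dual graph of $Y$; since that graph is a tree its cycle space is trivial, forcing $\chi \equiv 0$. Then each $M|_{\tilde{\Y}_i}$ is in the kernel of the character map, hence pulled back from $\Pic(\PP^1)[2] = 0$, hence trivial; a final tree-coboundary argument kills the gluing data and gives $M \simeq \O_\Y$.

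The main obstacle I expect is the uniqueness step, specifically the identification of the per-component parity equations with the $\FF_2$-cycle condition on the dual graph of $Y$, together with the correction $[\tau \text{ odd}]\,\chi(\tau) = 0$ obtained by summing these equations. Once this identification is made, the tree structure of $Y$ renders the vanishing of the cycle space, and hence of $\Pic(\Y)[2]$, immediate; and the existence construction is essentially formal once the parity lemma is in hand.
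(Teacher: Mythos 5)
Your proof is correct, but it is organized quite differently from the paper's. For existence, the paper argues by induction on the number of odd nodes: it normalizes at a single odd node, adds an auxiliary point to each half to force even degree, takes an ordinary square root there, and reglues with a half-twist $\L_i(-\tfrac12 p)$; the parity bookkeeping is thus absorbed into the induction and never isolated. You instead normalize at all orbinodes at once and write the square root down in closed form on each component, which forces you to prove the parity lemma $k_i\equiv d_i \pmod 2$ explicitly (a true and useful statement that I verified splits into the two cases $\tau\in C_i$ and $\tau\in T_{j_0}$ exactly as you indicate). For uniqueness, the paper disposes of the matter in one line -- $\L^{\otimes 2}$ descends to $Y$, $\Pic(Y)$ is torsion-free, and $\Pic(Y)\hookrightarrow\Pic(\Y)$ -- which does not visibly address the $(\ZZ/2)^{k_i-1}$ of $2$-torsion living on each component $\tilde\Y_i$ with $k_i\geq 2$ orbipoints; your computation of $\Pic(\Y)[2]=0$ via the character map, the per-component even-weight condition, the summation trick killing $\chi(\tau)$, and the vanishing of the cycle space of the dual tree fills exactly this gap and is, to my mind, the more convincing argument. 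The trade-off: the paper's induction is shorter and avoids any dual-graph combinatorics, while your route yields an explicit formula for $\L$, makes the second assertion (non-trivial character precisely at odd points) immediate from the construction, and gives a genuinely complete proof of uniqueness.
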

\begin{proof} 
Recall that a line bundle $\L$ on $\Y$ is a datum of a line bundle $L$ on $Y$ together with an action of 
$\mu_2$ on the fiber $\L_p$ for every odd point $p$ of $Y$.  In particular, $\L^{\otimes 2}$ is always a pullback of a line bundle from  
$Y$. Hence, the uniqueness holds because $\Pic(Y)$ is torsion-free and $\Pic(Y)\ra \Pic(\Y)$ is injective. It remains to establish existence.


We proceed by induction on the number of odd nodes. Suppose there are none. 
Then either $\Y$ is a scheme and the 
statement clearly holds, or $\Y$ has an odd section at infinity. In the latter case, 
let $\Y_1$ be the irreducible component of $\Y$ containing
the section at infinity $\tau$. Then $\Y=\Y_1\cup \Y_2$, a nodal union of two connected components. 
Moreover, $\Y_2$ is a scheme and
the degree of $D$ restricted to every component of $\Y_2$ is even. It follows that there is a line bundle 
$\L_2$ such that $(\L_2)^{\otimes 2}\simeq\O_{\Y_2}(D\vert_{\Y_2})$.
Let $D_1:=D\vert_{\Y_1}+p_1$, where $p_1$ is an arbitrary smooth point of $\Y_1$. 
Since $D_1$ has even degree, there is a line bundle $\L_1$ such that
$(\L_1)^{\otimes 2}\simeq\O_{\Y_1}(D_1)$. Finally, let $\L\in \Pic(\Y)$ be such that 
$\L\vert_{\Y_1}\simeq \L_1(-\frac{1}{2}\tau)$ and $\L_{\vert \Y_2}\simeq \L_2$. Clearly,
$\L^{\otimes 2}\simeq \O_{\Y}(D)$.

Suppose now $\Y$ has an odd node $p$. Let $\Y_1$ and $\Y_2$ be the connected components 
of $\Y$ such that $\Y_1\cap \Y_2=p$ and 
$\Y=\Y_1\cup \Y_2$. Take $R_1$ and $R_2$ to be irreducible components of $\Y_1$ and $\Y_2$, respectively, containing point $p$.
Let $D_i:=D\vert_{\Y_i}+p_i$, where $p_i$ is an arbitrary smooth point of $R_i$. Denote by 
$\tilde{\Y}_i$ the orbicurve obtained from $\Y_i$ 
by forgetting the stack structure at $p$. 
Then $(\tilde{\Y}_i, D_i)$ are even rational orbicurves whose odd nodes are
exactly the odd nodes of $(\Y,D)$, with the exception of $p$. By induction, there are line bundles 
$\L_i$ on $\tilde{\Y}_{i}$ such that $(\L_i)^{\otimes 2}=\O_{\tilde{\Y}_i}(D_i)$. 
Now, form a line bundle $\L$ on $\Y$ satisfying $\L\vert_{\Y_i}=\L_i(-\frac{1}{2}p)$. 
Then $(\L)^{\otimes 2}=\O_\Y(D)$.

\end{proof}

Next, we analyze families of even rational orbicurves over more general bases. 
We begin by fixing a weight vector
$\W=\left(1, w_1^{d_1},\dots, w_n^{d_n}\right)$. As the following lemma illustrates, 
the presence of the section at infinity greatly simplifies the geometry of an arbitrary 
divisorially marked family.
\begin{lemma}
\label{L:zero-section} Let $P \ra T$ be a $\PP^1$-bundle with a section $\tau \co T\ra P$. 
Suppose that there is a $T$-flat divisor $D\subset P$ of relative degree $d$ and
disjoint from $\tau(T)$. Then there exists a section $\sigma\co T\ra  P\smallsetminus \tau(T)$.
\end{lemma}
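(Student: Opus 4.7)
The plan is to use the degree-$d$ divisor $D$ to build a section of $P\smallsetminus\tau(T)\to T$ by a ``centroid'' (or ``averaging'') construction. First I would observe that $P\smallsetminus\tau(T)\to T$ is an $\AA^1$-bundle, \'{e}tale-locally isomorphic to $T\times\AA^1$, with structure group contained in the stabilizer of $\infty\in\PP^1$ inside $\PGL_2$, namely the affine group $\AA^1\rtimes\GG_m$. Since $D\subset P\smallsetminus\tau(T)$ is $T$\nb-flat of relative degree $d$, \'{e}tale-locally on $T$ the divisor $D$ is cut out by a monic polynomial $f(x)=x^d+a_1x^{d-1}+\cdots+a_d$ with $a_i\in \O_T$, where $x$ denotes the affine coordinate on the trivializing chart of $P\smallsetminus\tau(T)$.

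Next, in each local trivialization I would set $\sigma:=-a_1/d$, using the characteristic-zero hypothesis to invert $d$. The key point is that these local sections glue. Under a change of trivialization $y=\lambda x+\mu$ with $\lambda\in\O_T^\times$ and $\mu\in\O_T$, the roots $r_i$ of $f$ are sent to $\lambda r_i+\mu$, so the new top coefficient becomes $a_1'=\lambda a_1-d\mu$, and the new centroid equals $-a_1'/d=\lambda(-a_1/d)+\mu$. This is precisely the image of the old $\sigma$ under the affine map $x\mapsto \lambda x+\mu$, so the local constructions agree on overlaps, yielding a global morphism $\sigma\co T\to P\smallsetminus\tau(T)$; it is a section because $\pi\circ\sigma$ is the identity in every chart.

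The only hypothesis genuinely invoked is the invertibility of $d$, which is automatic under our standing characteristic-zero convention. I do not foresee a real obstacle: the construction simply encodes the fact that the ``sum of roots divided by $d$'' is a canonical, affine-equivariant functional on degree-$d$ monic polynomials, hence defines an intrinsic point of the $\AA^1$-bundle $P\smallsetminus\tau(T)$. A coordinate-free variant would view the finite flat $T$-scheme $D\to T$ inside the torsor $P\smallsetminus\tau(T)$ and take its barycenter via the trace divided by $d$, but the polynomial description above is the most transparent route and immediately makes clear why the resulting $\sigma$ lands in $P\smallsetminus\tau(T)$.
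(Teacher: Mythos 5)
Your proof is correct and follows essentially the same route as the paper's: both take the ``center of mass'' of $D$ in each fiber (dividing by $d$, which is where characteristic $0$ enters) and verify that this functional is equivariant under affine changes of trivialization, so the local sections glue. The paper phrases the local computation in terms of a homogeneous form $f_D=a_dx^d+\cdots+a_0y^d$ on $\PP(\E)$ with $\E$ free of rank $2$ and sets $s=y+\frac{a_1}{da_0}x$, but this is the same centroid construction you describe in the affine coordinate.
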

\begin{proof}
The idea of the proof is to take the center of mass of the divisor in each fiber 
(this is where division by $d$ comes in and the characteristic $0$ assumption is used). 
We now formalize this idea. To begin, we show that the section exists affine locally on $T$. 
To this end, suppose that $T$ is affine and $P=\PP \E$,
where $\E$ is a free vector bundle of rank $2$. Then $\tau(T)$ is the vanishing locus of some
$x\in \HH^0(T, \E^*)$, and the relative divisor $D$ is defined as the vanishing locus of 
$f_D\in \HH^0(T, \Sym^d\E^*)=\Sym^d \HH^0(T, \E^*)$. Since $\E$ is free of rank $2$, we can find 
another section of $P\ra T$ disjoint from $\tau(T)$. 
It corresponds to $y\in \HH^0(T, \E^*)$. We now express $f_D$ in terms of $x$ and $y$:
\[
f_D=a_d x^d+\cdots+a_1xy^{d-1}+a_0y^{d}.
\]
Note that the assumption that $D$ is disjoint from $\tau(T)$ implies that $a_0\neq 0$. We now define $s:=y+\dfrac{a_1}{da_0}x\in \HH^0(T,\E^*)$. The vanishing locus of $s$ defines another section of $P\ra T$, disjoint from $\tau(T)$.

It remains to show that different sections $s$ glue. For this, we need to show that the construction 
of $s$ above was in fact independent of the choice of $y$. Indeed, suppose
we chose section $y'=ay+bx$, with $a\neq 0$. Then using $y=(y'-bx)/a$, we rewrite $f_D$ in the new coordinates as
\begin{multline*}
f_D=a_d x^d+\cdots+a_1x((y'-bx)/a)^{d-1}+a_0((y'-bx)/a)^{d} \\ =\frac{1}{a^d} (a_da^dx^d+\cdots+(aa_1-a_0db)x(y')^{d-1}+a_0(y')^{d}).
\end{multline*}
We see that 
\[s'=y'+\frac{aa_1-a_0db}{da_0}x=ay+bx+\frac{aa_1}{da_0}x-bx=as.\]
 Since $a\neq 0$, $s'$ and $s$ define the same section.
\end{proof}

The principal application of Lemma \ref{L:zero-section} is to the study of the Picard group of 
a $\PP^1$-bundle over a general base. 
Namely, suppose $D$ is a divisor on the $\PP^1$\nb-bundle 
$P\ra T$ of relative degree $d$, and disjoint from the section at infinity $\tau(T)$.
Then by the lemma, there exists a section $\sigma\co T\ra P\smallsetminus \tau(T)$.
Denote the image of $\sigma$ by $\Sigma$.
The divisor $D-d\Sigma$ is of relative degree $0$. Since 
$\HH^1(\PP^1,\O_{\PP^1})=0$, 
the cohomology and base change theorem implies that 
$\O_P(D-d\Sigma)$ is a pullback of a line  
bundle from the base. By the construction, $\O_P(D-d\Sigma)\vert_{\tau(T)} \simeq \O_T$. It follows that $\O_P(D)\simeq \O_P(d\Sigma)$. 

\begin{lemma} 
 \label{L:even-line-bundle}
Let $\pi\co Y \ra T$ be a family of semistable rational
curves with the section at infinity $\tau\co T\ra Y$. 
Suppose that a line bundle $\L\in \Pic(Y)$ has even degree when restricted to every irreducible component of every fiber and satisfies $\tau^{*}\L\simeq \O_T$. Then there is a unique line bundle 
$\MM\in \Pic(Y)$ satisfying $\L\simeq \MM^{\otimes 2}$ and $\tau^{*}\MM\simeq \O_T$.
\end{lemma}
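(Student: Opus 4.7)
I establish uniqueness first, then reduce existence to an étale-local construction and proceed by induction on the number of nodes in fibers.

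For uniqueness, if $\MM_1, \MM_2$ both satisfy the conclusions, then $\N := \MM_1 \otimes \MM_2^{-1}$ satisfies $\N^{\otimes 2} \simeq \O_Y$ and $\tau^*\N \simeq \O_T$. Restricted to any geometric fiber---a tree of $\PP^1$'s---$\N$ is $2$-torsion in a torsion-free Picard group ($\ZZ^k$ via multidegree), hence trivial. Since $\HH^0(Y_t, \O_{Y_t}) \simeq \KK$ and $\HH^1(Y_t, \O_{Y_t}) = 0$, cohomology-and-base-change shows $\pi_*\N$ is invertible with $\pi^*\pi_*\N \to \N$ an isomorphism; applying $\tau^*$ gives $\pi_*\N \simeq \O_T$, whence $\N \simeq \O_Y$.

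For existence, uniqueness and étale descent reduce to producing $\MM$ after an étale base change. I induct on the number of nodes in the central fiber over a chosen point $t_0 \in T$. When $Y \to T$ is a $\PP^1$-bundle, write $Y \simeq \PP(\E)$ and $\L \simeq \O_Y(2d) \otimes \pi^*L_0$; then $\tau^*\L \simeq \O_T$ forces $L_0 \simeq (\tau^*\O_Y(d))^{-2}$, so $\MM := \O_Y(d) \otimes \pi^*(\tau^*\O_Y(d))^{-1}$ works. Otherwise, after further étale localization (and base change to the closed subscheme where a selected node persists), the node extends to a section $Z \hookrightarrow Y$ that separates $Y$ into $Y_1 \cup_Z Y_2$ with $\tau$ contained in $Y_1$ (since every fiber's dual graph is a tree). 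Let $\sigma_i \co T \to Y_i$ denote the two sections over $Z$ from normalization. Apply induction to $(Y_1, \tau, \L|_{Y_1})$ to obtain $\MM_1$, set $N := \sigma_1^*\MM_1$ so that $N^{\otimes 2} \simeq \L|_Z$, twist $\L|_{Y_2}$ by $\pi^*N^{-2}$, apply induction to this twisted data to get $\MM_2'$ with $\sigma_2^*\MM_2' \simeq \O_T$, and set $\MM_2 := \MM_2' \otimes \pi^*N$. Then $\sigma_1^*\MM_1 \simeq N \simeq \sigma_2^*\MM_2$, so $\MM_1$ and $\MM_2$ glue along $Z$ via the unique isomorphism whose square recovers the gluing of $\L$.

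The delicate point is the inductive step: in a family of semistable curves, nodes may smooth out, so a node of the central fiber need not persist as a section over an étale neighborhood of all of $T$, which forces one to stratify $T$ and carefully reassemble. An alternative that bypasses this is to invoke representability of $\mathrm{Pic}_{Y/T}$ as an étale $T$-group space (from $\HH^1(Y_t, \O_{Y_t}) = 0$): multiplication by $2$ is then an open embedding onto the even-multidegree locus, and $\L$ defines a section in this image which lifts uniquely to the desired $\MM$ once the rigidification supplied by $\tau$ kills the ambiguity by $\Pic(T)$.
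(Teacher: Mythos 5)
Your uniqueness argument is essentially the paper's: torsion-freeness of the Picard group of a tree of $\PP^1$'s kills the difference fiberwise, and cohomology-and-base-change together with the rigidification along $\tau$ finishes. For existence, however, your primary argument --- induction on the number of nodes via a decomposition $Y=Y_1\cup_Z Y_2$ --- founders on exactly the point you flag, and as written this is a genuine gap rather than a ``delicate point'': a node of one fiber does not extend to a relative node over any \'{e}tale neighborhood in $T$, only over the closed locus where it persists, and once you stratify $T$ you no longer have a flat family to which the inductive hypothesis applies, nor any mechanism for gluing line bundles constructed over different strata back into a bundle on $Y$. ``Stratify and carefully reassemble'' cannot be carried out.

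Your fallback via the relative Picard space does work and is a genuinely different route from the paper's. Since $\pi$ is cohomologically flat in degree $0$ and $\HH^1(Y_t,\O_{Y_t})=0$, the functor $\Pic_{Y/T}$ is an algebraic space \'{e}tale over $T$; multiplication by $2$ is an \'{e}tale monomorphism (fiberwise the groups are torsion-free), hence an open immersion, and the even-multidegree hypothesis says the section determined by $\L$ factors through its image and so lifts uniquely; the section $\tau$ splits $\Pic(Y)\simeq \Pic(T)\oplus \Pic_{Y/T}(T)$ and promotes this lift to an honest line bundle $\MM$ with $\tau^*\MM\simeq\O_T$ and $\MM^{\otimes 2}\simeq\L$. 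This is clean but leans on representability, which you would need to spell out. The paper instead argues by hand: over each strict henselization $\O_{T,t}^{\sh}$ it writes $\L$ on the closed fiber as $\O(\sum_i\varepsilon_i p_i)^{\otimes 2}$ for smooth points $p_i$ off $\tau$, lifts the $p_i$ to sections $\Sigma_i$ by henselianity, uses triviality of $\Pic(\spec\O_{T,t}^{\sh})$ to get $\L\simeq\O(\sum_i\varepsilon_i\Sigma_i)^{\otimes 2}$ there, and then descends along an \'{e}tale cover of $T$, using the uniqueness statement and the trivialization along $\tau$ to verify the cocycle condition. Either keep your Picard-space argument and justify representability and the splitting, or replace (not patch) the induction.
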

\begin{proof}
Denote the image of the section at infinity by $\Theta$.
The square root of $\L$ restricted to every fiber 
is unique (by, e.g., Lemma \ref{L:root-bundle-unique}). Since the fibers are projective, connected, 
and have $\HH^1(X,\O_X)=0$, 
by the cohomology and base change theorem
there can be at most one line bundle $\MM$ such that $\MM^{\otimes 2}=\L$ and 
$\MM\vert_{\Theta} \simeq \O_\Theta$.

We now prove existence. For every $t\in T$ consider the base extension 
$\O_{T,t}^\sh \ra T$ and the pullback family
$Y'\ra \spec \O_{T,t}^\sh$. On the fiber of $Y'$ over a closed point $t$,
choose smooth points $p_1,\dots, p_d \in Y'_{t}\smallsetminus \Theta$ such that 
$\L_{t}=\O_{Y'_t}(\sum\limits_{i=1}^d\varepsilon_i p_i)^{\otimes 2}$,
where $\varepsilon_i$
are appropriate signs.
By smoothness, the points $p_i$ give rise to
sections $\sigma_i\co \spec \O_{T,t}^\sh \ra Y'$ with images 
$\Sigma_i\subset Y'$ \cite[2.2, Prop. 5]{neron-models}.
By degree consideration and triviality of $\Pic(\O_{T,t}^\sh)$, we have 
$\L\otimes \O_{Y'}\simeq\O_{Y'}(\sum\limits_{i=1}^d\varepsilon_i \Sigma_i)^{\otimes 2}$.
Moreover, 
$\O_{Y'}(\sum\limits_{i=1}^d\varepsilon_i \Sigma_i)$  
restricts to a trivial line bundle along $\Theta$.

It follows that there is a surjective \'{e}tale cover $T'\ra T$ such that the pullback 
of $\L$ to $Y':=Y\times_{T}T'$ has a square root $\MM$ that restricts to a trivial
 line bundle along $\Theta$. (By abuse of notation, we denote by
 $\Theta$ the preimage of $\Theta$ under any base extension.) 
 Fix an isomorphism 
 $\iota \co \MM_{\vert \Theta} \ra \O_{\Theta}$. By the uniqueness above, 
 the line bundles $\pr_1^*\MM$ and $\pr_2^*\MM$ are
isomorphic on $Y'\times_Y Y'$. Fix an isomorphism $\alpha\co 
\pr_1^*\MM\simeq \pr_2^*\MM$ such that the following diagram
commutes: 
\[
\xymatrix{
(\pr_1^*\MM)\vert_{\Theta} \ar[r]^{\alpha\vert \Theta} \ar[d]^{\pr_1^*\iota} & 
(\pr_2^*\MM)\vert_{\Theta} \ar[d]^{\pr_2^*\iota} \\
\pr_1^*\O_{\Theta} \ar[r]^{\textrm{can}} & \pr_2^*\O_{\Theta}
}
\]
It follows that the cocycle condition 
$\pr_{12}^*(\alpha)\circ\pr_{23}^*(\alpha)=\pr_{13}^*(\alpha)$ 
on $Y'\times_{Y} Y'\times_{Y} Y'$ is
satisfied. Thus, $\MM$ descends to a line bundle on $Y$ whose square is $\L$.
\end{proof}

\subsection{Moduli stack of even rational orbicurves} 
Fix a weight 
vector $\W=(1, w_1^{d_1},\dots, w_n^{d_n})$. A family of {\em even rational orbicurves}
over a scheme $T$ will be a datum $(\pi \co \Y\ra T; D_0, D_1,\dots, D_n)$ where $\pi\co \Y \ra T$ 
is a flat and proper morphism from a Deligne-Mumford stack to a scheme such that 
geometric fibers of $\pi$ are even rational orbicurves. The divisor $D_0$ 
of weight $1$ defines the section at infinity $\tau\co T\ra \Y$. From now on, we will conflate $D_0$
and $\tau$. A family of even rational orbicurves 
is called $\W$-stable if the coarse moduli space $Y\ra T$ is 
$\W$-stable in the sense of Definition \ref{D:stable-div-marked}.

\begin{definition}\label{D:even-orbicurves}
We denote by $\Mdiv{\W}^\ev$ the category
of $\W$-stable families of even rational orbicurves. The morphisms in $\Mdiv{\W}^\ev$ 
are obvious cartesian diagrams.
\end{definition}
In the remainder of this section, we show that 
the category $\Mdiv{\W}^\ev$ is a smooth and proper Deligne-Mumford stack over $\mathbb{K}$.

To begin, we recall that given a triple $(X,D,r)$ consisting of 
an arbitrary Deligne-Mumford stack $X$, a Cartier divisor $D$, and a positive integer $r$,
there exists a {\em root stack} $X_{D,r}$: The objects in $X_{D,r}$ over a scheme $T$ 
 are morphisms $f\co T\ra X$ together with a datum of a triple $(\L, s, \iota)$ consisting
 of a line bundle $\L\in \Pic(T)$, a section $s\in \HH^{0}(T,\L)$ and an isomorphism
 $\iota\co \L^r \ra f^*\O_X(D)$ satisfying $\iota(s^r)=f^*(D)$. 
 We refer to \cite{cadman} for the
 construction of $X_{D,r}$ and the proof that $X_{D,r}$ is Deligne-Mumford.

Next, given a Cartier divisor $D$ with irreducible
components $D_1,\dots, D_k$, we define (with a certain abuse of notation)
 \[
 X\sqrt[r]{D} := X_{D_1,r}\times_X \cdots \times_X X_{D_k,r}.
 \]
\begin{lemma}\label{L:root-stack}
If $X$ is a proper Deligne-Mumford stack, then so is $X\sqrt[r]{D}$. Moreover, if
$X$ is smooth and $D$ is a simple normal crossing divisor with each $D_i$ smooth, 
then $X\sqrt[r]{D}$ is also smooth.
\end{lemma}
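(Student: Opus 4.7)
The plan is to deduce both assertions from the basic properties of a single root stack $X_{D_i,r}$ as established in \cite{cadman}, and then to pass to the iterated fiber product. There, each structure morphism $X_{D_i,r}\to X$ is shown to be a representable, proper (in fact finite flat on an étale atlas) morphism of Deligne-Mumford stacks. Since both properness and the Deligne-Mumford property are preserved under arbitrary base change and composition, the iterated fiber product
\[
X\sqrt[r]{D} = X_{D_1,r}\times_X \cdots \times_X X_{D_k,r} \longrightarrow X
\]
is a proper Deligne-Mumford morphism. Composing with the proper structure morphism of $X$ over $\spec \mathbb{K}$ shows that $X\sqrt[r]{D}$ is a proper Deligne-Mumford stack, settling the first claim.

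For smoothness, I would check the property étale-locally on $X$. Pick an étale cover $U\to X$ by a smooth scheme and, after shrinking $U$ if necessary, assume that each component $D_i\times_X U$ is cut out by a regular parameter $x_i$, with $x_1,\dots,x_k$ forming part of a regular system of parameters on $U$ (this uses the SNC hypothesis and smoothness of the $D_i$). Then Cadman's local model gives
\[
U_{D_i,r} \simeq \left[\spec \mathcal{O}_U[t_i]/(t_i^r - x_i)\;/\;\mu_r\right],
\]
and passing to the fiber product over $U$ identifies
\[
U\sqrt[r]{D} \simeq \left[\spec \mathcal{O}_U[t_1,\dots,t_k]/\bigl(t_i^r - x_i \,:\, 1\le i\le k\bigr)\;/\;\mu_r^k\right].
\]
Using the relations $x_i = t_i^r$ to eliminate $x_1,\dots,x_k$, the covering scheme becomes étale over an affine space in the coordinates $t_1,\dots,t_k$ together with the remaining regular parameters of $U$, and is therefore smooth over $\mathbb{K}$. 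The finite group $\mu_r^k$ acts linearly on the $t_i$, and in characteristic zero this action produces a smooth quotient stack; thus $X\sqrt[r]{D}$ is smooth.

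The only mildly delicate step, and the one I expect to require the most care, is verifying that on an SNC chart the universal property of the fiber product really yields the joint root cover $\spec \mathcal{O}_U[t_1,\dots,t_k]/(t_i^r-x_i)$ with the diagonal $\mu_r^k$-action, rather than something more complicated coming from interaction between the components $D_i$. This follows directly from the universal property of each $X_{D_i,r}$ together with the fact that on an SNC chart the line bundle-section pairs $(\mathcal{O}_X(D_i),s_{D_i})$ are independently trivialized, so an $r$-th root of each is an independent choice. Once this compatibility is recorded, the Jacobian of the system $(t_i^r-x_i)$ with respect to the $x_i$ is invertible, the smoothness of the covering scheme is immediate, and the assertion about $X\sqrt[r]{D}$ follows by étale descent.
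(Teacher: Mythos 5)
Your argument is correct, and the smoothness part is essentially identical to the paper's: both reduce to the \'{e}tale-local model $\left[\spec A[t_1,\dots,t_k]/(t_1^r-x_1,\dots,t_k^r-x_k)\ /\ \mu_r^k\right]$ and apply the Jacobian criterion to the covering scheme. For properness the paper's primary argument is a direct check of the valuative criterion for each $X_{D_i,r}$ (extend $\spec K\to X$ over $\spec R$ by properness of $X$, then extract an $r$-th root of the local equation of $D_i$ after the base change $t=s^r$, using $\Pic(\spec R)=0$); your route--properness of each $X_{D_i,r}\to X$ from \cite{cadman}, stability of properness under fiber products over $X$ and composition with $X\to\spec\mathbb{K}$--is exactly the alternative the paper records in its final sentence, namely that $X\sqrt[r]{D}\to X$ is \'{e}tale-locally a coarse moduli space map and hence proper. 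The valuative-criterion argument is more self-contained; yours is shorter but outsources the key input. One small correction: the morphism $X_{D_i,r}\to X$ is \emph{not} representable (points of $D_i$ acquire $\mu_r$ stabilizers, so fibers over $D_i$ contain $B\mu_r$); this does not damage your argument, since properness and the Deligne--Mumford property of (possibly non-representable) morphisms are still preserved under base change and composition, but the word should be dropped. Likewise, the characteristic-zero appeal at the end of your smoothness argument is not needed for the quotient step itself--$[Y/\mu_r^k]$ is smooth whenever $Y$ is, the atlas $Y\to[Y/\mu_r^k]$ being smooth since $\mu_r$ is \'{e}tale in characteristic $0$--though of course the standing characteristic assumption is what makes $\mu_r$ \'{e}tale in the first place.
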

\begin{proof}
 Briefly, \'{e}tale locally on $X$, the morphism $X\sqrt[r]{D}\ra X$ is given by 
 \[
 \left[ \spec A[x_1,\dots,x_k]/(x_1^r-f_1,\dots, x_k^r-f_k) \ / \ (\mu_r)^k \right] \ra \spec A,
 \]
 where $f_i\in A$ is a local equation of $D_i$. 
Our assumptions imply that, by the Jacobian criterion,
$\spec A[x_1,\dots,x_k]/(x_1^r-f_1,\dots, x_k^r-f_k)$ 
 is smooth. Thus
$X\sqrt[r]{D}$ is smooth.

To check properness, we use the valuative criterion \cite[Proposition 7.12]{LM}. Let $R$ be a discrete
valuation ring with the uniformizer $t$
and the fraction field $K$. Consider a morphism $\spec K \ra X_{D_{i},r}$. Compose 
with $X_{D_{i},r}\ra X$ and use properness of $X$ to conclude that, possibly after a finite base change, 
there is a unique
extension $\phi\co \spec R\ra X$. It remains to note that since $R$ is a unique 
factorization domain, we have $\Pic(\spec R)=0$ and the
$r^{\text{th}}$ root of $\phi^*(D_{i})\in R$ exists after further base change $t=s^r$, and 
is unique up to a unit in $R[s]$. Thus, we obtain $\spec R[s]\ra X_{D_{i},r}$.

To see that $X\sqrt[r]{D}$ is proper we could also observe that, \'{e}tale locally,
$X\sqrt[r]{D}\ra X$ is a map from a stack to its coarse moduli space. In particular,
it is proper \cite[Theorem 3.1(1)]{conrad}.
\end{proof}

 \begin{theorem} 
\label{T:even-orbicurves}
Let $\W=\left(1, w_1^{d_1},\dots, 
w_n^{d_n}\right)$ be a weight vector with $\sum_{i=1}^n d_i$ even. Then 
$\Mdiv{\W}^\ev$ is the 
root stack $\Mdiv{\W}\sqrt[2]{\delta_\odd}\, $, where $\delta_{\odd}$ is the Cartier divisor of $\W$\nb-stable rational curves with odd nodes.
In particular, $\Mdiv{\W}^\ev$ is a smooth and proper Deligne-Mumford stack over $\mathbb{K}$.
\end{theorem}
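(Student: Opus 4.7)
The strategy is to exhibit a natural equivalence of stacks
\[
\Phi\colon \Mdiv{\W}\sqrt[2]{\delta_\odd} \xrightarrow{\ \sim\ } \Mdiv{\W}^\ev,
\]
and then deduce smoothness, properness, and the Deligne--Mumford property from Lemma \ref{L:root-stack} applied to the smooth and proper Deligne--Mumford stack $\Mdiv{\W}$ of Proposition \ref{P:DivStack-X}.

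First I would construct $\Phi$ by writing down a universal even rational orbicurve over the root stack. A $T$-point of $\Mdiv{\W}\sqrt[2]{\delta_\odd}$ consists of a $\W$-stable family $(Y;D_0,\dots,D_n)\to T$ together with a line bundle $\L$ on $T$, a section $s\in \HH^0(T,\L)$, and an isomorphism $\L^{\otimes 2}\cong \O_T(\delta_\odd)$ identifying $s^2$ with the defining section of $\delta_\odd$. The key is the étale local model at a point of $\delta_\odd$: the universal family looks like $\spec R[x,y]/(xy-t)\to \spec R[t]$ with $V(t)=\delta_\odd$, and the square root data provides $s$ with $s^2=t$. Locally I form
\[
\bigl[\, \spec R[x,y,s]/(xy-s^2) \; / \; \mu_2 \,\bigr],
\]
where $\mu_2$ acts diagonally by $(x,y,s)\mapsto (-x,-y,-s)$. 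This patch is smooth, its coarse moduli space agrees with the pullback of the original family, and it realizes precisely the orbinode model demanded by axiom (6) of an even rational orbicurve. The global pair $(\L,s)$ is exactly the descent/gluing datum that sews these étale-local orbinode patches into a single orbicurve $\Y\to T$, with marking divisors $D_i$ pulled back from $Y$. The assumption that $\sum_{i=1}^n d_i$ is even guarantees no odd section at infinity, so all stack structure is concentrated over $\delta_\odd$, and axioms (1)--(7) of an even rational orbicurve hold.

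To see $\Phi$ is an equivalence I would construct an inverse functor. Given a $\W$-stable family of even rational orbicurves $\Y\to T$, the coarse moduli family $(Y;D_0,\dots,D_n)\to T$ defines the underlying map to $\Mdiv{\W}$. The square root data is read off locally at each orbinode: the local description $[\spec k[x,y,s]/(xy-s^2)/\mu_2]\to \spec k[t]$ singles out a distinguished element $s$ on the base, whose $\mu_2$-character is by construction a local square root of the defining section $t$ of $\delta_\odd$. These local $(\L_{\mathrm{loc}}, s_{\mathrm{loc}})$ are canonically determined, hence patch together (the uniqueness is in the spirit of Lemmas \ref{L:root-bundle-unique} and \ref{L:even-line-bundle}) to a global $(\L,s)$. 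That the two constructions are mutually inverse reduces by fppf descent to the étale-local case, where both are given by the same explicit model.

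With $\Phi$ in hand, the remaining claims follow at once. The divisor $\delta_\odd\subset \Mdiv{\W}$ is a simple normal crossing boundary divisor with smooth components, since these are boundary strata of a weighted pointed moduli problem, so Lemma \ref{L:root-stack} applied component by component yields that $\Mdiv{\W}\sqrt[2]{\delta_\odd}$ is a smooth and proper Deligne--Mumford stack over $\mathbb{K}$. The main obstacle in the plan is the globalization in step one: the étale-local orbinode patches each depend on a choice of square root of $t$, and one must argue that the global pair $(\L,s)$ supplies exactly the cocycle needed to glue them over a general base $T$, and conversely that the orbinode data of an even rational orbicurve determine $(\L,s)$ uniquely up to canonical isomorphism. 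Making this descent argument precise, while keeping the marking divisors coherent with the $\mu_2$-action, is the technical heart of the proof.
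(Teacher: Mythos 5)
Your overall strategy --- exhibit an equivalence between $\Mdiv{\W}^{\ev}$ and the root stack and then invoke Lemma \ref{L:root-stack} together with Proposition \ref{P:DivStack-X} --- is exactly the strategy of the paper, and your forward direction (reading off the square-root datum from the orbinode structure, since the coarse space of $\bigl[\spec R\{x,y\}/(xy-c)\,/\mu_2\bigr]$ has node-smoothing parameter $c^2$) matches the paper's argument. But the inverse direction contains a genuine gap, and your local model is wrong. If $s\in\O_T$ is the square root of the node parameter $t$ of the coarse family, the orbicurve you need has local form $\bigl[\spec R\{u,v\}/(uv-s)\,/\mu_2\bigr]$ with $u^2=x$, $v^2=y$: its node parameter upstairs is $s$, and its coarse space is $\spec R\{x,y\}/(xy-s^2)=\spec R\{x,y\}/(xy-t)$, as required. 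Your model $\bigl[\spec R[x,y,s]/(xy-s^2)\,/\mu_2\bigr]$ is off by a square: viewed as a family over $T$ its coarse space has node parameter $t^2$, not $t$, so it is not a pullback of the original family; and the threefold $xy=s^2$ is not smooth at the origin, so the claim "this patch is smooth" also fails.

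The deeper problem is the one you yourself flag and then do not resolve: you assert that the pair $(\L,s)$ "is exactly the descent datum that sews these \'{e}tale-local orbinode patches into a single orbicurve," but gluing stacky local models along an isomorphism of coarse spaces is precisely the step that requires an argument, and no cocycle is actually produced. The paper avoids local-to-global gluing entirely by a global construction: starting from $T\ra\Mdiv{\W}\sqrt[2]{\delta_\odd}$, it performs a weighted blow-up $Y'\ra Y$ inserting an exceptional rational curve $E_i$ at each odd node (possible because the node parameter is the square $t_i^{2a_i}$), observes that $\O_{Y'}(E_i)$ has even degree on each $(E_i)_t$, applies Lemma \ref{L:even-line-bundle} to extract a global square root of $\sum_i D_i+\sum_i E_i$ in $\Pic(Y')$, forms the associated double cover $X'\ra Y'$, stabilizes to $X$, and sets $\Y=[X/\mu_2]$. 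The orbinode structure then appears automatically at the odd nodes, with no gluing to check, and the whole construction visibly commutes with smooth base change. To complete your proof you would either need to carry out this global construction (or an equivalent one) or give an honest descent argument for your local patches, including the corrected local model above; as written, the "technical heart" you identify is exactly what is missing.
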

\begin{proof} 
Recall that $\Mdiv{\W}$ is a smooth and proper Deligne-Mumford stack by Proposition 
\ref{P:DivStack-X}. It carries  
a simple normal crossing Cartier divisor $\delta_{\odd}$ parameterizing curves with odd nodes. 
We proceed to establish the equivalence of categories $\Mdiv{\W}^\ev$
and $\Mdiv{\W}\sqrt[2]{\delta_\odd}$. 

Consider a family of even rational orbicurves $(\pi \co \Y\ra T; \tau, D_1,\dots, D_n)$. 
The coarse moduli space $Y$ is a flat and proper family of divisorially marked rational
curves over $T$. Therefore, it induces a morphism $T\ra \Mdiv{\W}$.
Now suppose $t\in T$ is a point such that the fiber $Y_t$ has 
$k$ odd nodes $p_1,\dots, p_k$. Denote by $\delta_i$ the locus where the node $p_i$ is preserved; $\delta_i$ is an irreducible 
component of $\delta_\odd$. Note 
that, since components of $\delta_{\odd}$ do not self-intersect, the Cartier 
divisors $\delta_i$ 
are distinct. 
By definition, the local equation of $\Y\ra T$ around $p_i$
is 
$$\bigl[ \spec \O_{T,t}^{\sh}\{x,y\}/(xy-t^{a_i}_i)\ /\ \mu_2 \bigr] \ra \spec \O_{T,t}^{\sh},$$
where the action is $(x,y) \mapsto (-x,-y)$, and where $t_i\in \O_{T,t}^\sh$ is (a pullback
of) a local equation of $\delta_i$. Since 
$$\left(\O_{T,t}^{\sh}\{x,y\}/(xy-t^{a_i}_i)\right)^{\mu_2}=
\O_{T,t}^{\sh}\{x^2,y^2,xy\}/(xy-t^{a_i}_i)=\O_{T,t}^{\sh}\{x^2,y^2\}/(x^2y^2-t^{2a_i}_i),$$
 the local equation of $Y\ra T$ around $p_i$ is 
$$\spec \O_{T,t}^{\sh}\{x^2,y^2\}/(x^2y^2-t^{2a_i}_i) \ra \spec \O_{T,t}^{\sh}.$$
Therefore, the morphism $T\ra \Mdiv{\W}$ factors \'{e}tale locally through the root stack
$\left(\Mdiv{\W}\right)_{\delta_i,2}$ for each $i=1,\dots, k$. Thus it
factors \'{e}tale locally through $\Mdiv{\W}\sqrt[2]{\delta_\odd}$. Since the \'{e}tale descent for objects in $\Mdiv{\W}\sqrt[2]{\delta_i}$ is effective, we obtain 
a morphism $T\ra \Mdiv{\W}\sqrt[2]{\delta_\odd}$. 
Since morphisms in both categories are given by fiber products and since the
formation of coarse moduli space commutes with an arbitrary base change in 
characteristic $0$ (e.g., by \cite[Lemma 2.3.3]{AV}), we obtain a natural
transformation $\Mdiv{\W}^{\ev}\ra \Mdiv{\W}\sqrt[2]{\delta_\odd}$.

In the other direction, consider a smooth surjective 
morphism from a scheme $T$ to $\Mdiv{\W}\sqrt[2]{\delta_\odd}$.
Since $\Mdiv{\W}\sqrt[2]{\delta_\odd}$ is smooth and irreducible, $T$ is smooth and 
can be chosen to be irreducible. The map $T\ra \Mdiv{\W}\sqrt[2]{\delta_\odd}$ 
defines 
 a family $(Y\ra T; \tau, D_1,\dots, D_n)$ of $\W$-stable rational curves together with 
 a datum 
 $\{(\L_i, t_i): t_i\in \HH^0(T,\L_i)\}$ for every irreducible 
 component $\delta_i$ of $\delta_\odd$. The local equation of $Y$ around a node 
 $p_{i}\in Y_t$
 corresponding to $\delta_i$ is 
 \[
 \spec \O_{T,t}^{\sh}\{x,y\}/(xy-s_i^{a_i}),
 \]
 and $s_i=t_i^2$ under some identification of $(\L_i)_t$ and $\O_{T,t}$. 
 Next, we consider the blow-up along the ideal $\left((x,y)^{2a_i},t_i\right)$ (it is a weighted blow-up with weights of $x,y,t$ being $1,1,2a_i$) and 
 denote by $E_i$ the exceptional divisor of the blow-up. 
The result is a semistable family $Y' \ra T$ of rational nodal curves. 
 In the fiber $Y_t$ an odd node $p_i$ of $Y_t$
 has been replaced by a rational curve $(E_i)_t$. Evidently, 
 the degree of $\O_{Y'}(E_i)$ restricted to $(E_i)_t$ is even. 
 It follows by Lemma
 \ref{L:even-line-bundle} that the divisor 
 $$
 B:=\sum_{i=1}^n D_i+\sum_{i: \delta_i\subset \delta_{\odd}} E_i
 $$ is divisible by $2$ in the Picard group of 
$Y'$. Let $X'\ra Y'$ be the $\mu_2$-cover totally branched
 over  $B$. Then $X'\ra T$ is again a family of semistable curves, and we denote by $X\ra T$ 
 its stabilization. 
 The action of $\mu_2$ descends to $X$ and
 the stack $\Y=[X/\mu_2]$ is a family of even rational 
 orbicurves over $T$. Clearly, the coarse moduli space of $\Y$ is $Y$. We conclude that there
 is a morphism $T\ra \Mdiv{\W}^{\ev}$. Moreover, since our construction commutes with a
 smooth base change, this morphism descends to give a natural transformation 
 $\Mdiv{\W}\sqrt[2]{\delta_\odd}\ra \Mdiv{\W}^{\ev}$.

Evidently, the two constructed natural transformations define an equivalence of categories.
\end{proof}

\begin{corollary}\label{C:even-orbicurves-DM} The category $\Mdiv{\W}^{\ev}$ of even rational orbicurves
is a Deligne-Mumford stack over $\mathbb{K}$.
\end{corollary}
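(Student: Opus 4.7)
The plan is to reduce to Theorem \ref{T:even-orbicurves}, which already treats the case $\sum_{i=1}^n d_i$ even by identifying $\Mdiv{\W}^\ev$ with the root stack $\Mdiv{\W}\sqrt[2]{\delta_\odd}$. Since $\Mdiv{\W}$ is a smooth Deligne-Mumford stack by Proposition \ref{P:DivStack-X} and $\delta_\odd$ is a simple normal crossing divisor with smooth components, Lemma \ref{L:root-stack} delivers the Deligne-Mumford property of the root stack, and hence of $\Mdiv{\W}^\ev$, in this case.

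For the remaining case when $\sum_{i=1}^n d_i$ is odd, I would modify the construction in the proof of Theorem \ref{T:even-orbicurves} so that the same root-stack identification persists. In this case every $\W$-stable curve has an odd section at infinity, so an even rational orbicurve must carry a canonical $\mu_2$-structure at $\tau$ by condition (7) of the definition. I would repeat the two constructions of the proof of the theorem, with the only change that the branching divisor in the direction from $\Mdiv{\W}\sqrt[2]{\delta_\odd}$ to $\Mdiv{\W}^\ev$ is taken to be $B + \Theta$, where $\Theta$ is the image of $\tau$ and $B=\sum D_i + \sum E_i$ as before. The point of this correction is that $B$ has odd fiberwise degree in the odd case, whereas $B + \Theta$ again has even degree on every irreducible component of every fiber; Lemma \ref{L:even-line-bundle} then produces a canonical square root trivialized along $\Theta$, and the quotient of the associated $\mu_2$-cover by $\mu_2$ yields an even rational orbicurve with the required stack structure at $\tau$ as well as at each odd node.

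The main point to verify is that this modification introduces no extra moduli, i.e.\ that the orbifold structure at $\tau$ is rigid in families. This will follow from the uniqueness clause of Lemma \ref{L:even-line-bundle}, which pins down the square root of $\O_Y(B+\Theta)$ uniquely once its trivialization along the section at infinity is fixed; equivalently, since $\Theta$ is a fixed weight-one section attached functorially to every family, adding it to the branch locus changes no moduli-theoretic data. Consequently the constructed assignments descend to mutually inverse natural transformations between $\Mdiv{\W}^\ev$ and $\Mdiv{\W}\sqrt[2]{\delta_\odd}$ in the odd case as well, and the Deligne-Mumford property again follows from Lemma \ref{L:root-stack}. The step I expect to be the most delicate is checking, via étale descent along the smooth cover of $\Mdiv{\W}$, that these functors glue to a genuine equivalence of stacks rather than merely a local one.
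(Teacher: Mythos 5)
Your handling of the even-degree case coincides with the paper's (it is exactly Theorem \ref{T:even-orbicurves}), but for odd $\sum_{i=1}^n d_i$ you take a genuinely different route. The paper disposes of the odd case with a two-line trick: it passes to $\W'=(1,1^3,w_1^{d_1},\dots, w_n^{d_n})$, attaches a fixed $4$\nb-pointed $\PP^1$ at the section at infinity, and observes that the resulting $1$\nb-morphism $\Mdiv{\W}^{\ev}\ra \Mdiv{\W'}^{\ev}$ is a closed immersion into a stack already known to be Deligne-Mumford; the odd node created by the attachment absorbs the $\mu_2$-structure that previously sat at the odd section at infinity. You instead propose to extend the root-stack identification $\Mdiv{\W}^{\ev}\simeq \Mdiv{\W}\sqrt[2]{\delta_\odd}$ to the odd case by enlarging the branch divisor to $B+\Theta$, which has even fiberwise degree, and then running the double-cover construction verbatim. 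This works, but it shifts the burden onto two verifications that the paper's trick sidesteps: (i) that the $\mu_2$-structure at the odd section at infinity is, in families, canonically the square-root stack of $Y$ along $\Theta$, so it carries no moduli --- your appeal to the uniqueness clause of Lemma \ref{L:even-line-bundle} is about the square root of the line bundle rather than about the orbicurve itself, so this point deserves a separate sentence; and (ii) that the smooth stacky point at $\tau$ contributes no ghost automorphisms (unlike orbinodes, smooth stacky points of an orbicurve have trivial automorphism group over the coarse space, cf. \cite{ACV}), so the equivalence of categories is not disturbed. Granting these standard facts, your argument is correct and in fact proves more than the corollary asserts: combined with Lemma \ref{L:root-stack} it yields smoothness and properness of $\Mdiv{\W}^{\ev}$ in the odd case as well, whereas the paper's closed-immersion argument gives the Deligne-Mumford property and properness but not smoothness directly.
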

\begin{proof}
In the case $\sum_{i=1}^n d_i$ is even, this is the content of Theorem \ref{T:even-orbicurves}
above. The case of odd $\sum_{i=1}^n d_i$ reduces to it by a simple trick: Take $\W'=(1,1^3,w_1^{d_1},\dots, w_n^{d_n})$
and consider the morphism $\Mdiv{\W}^{\ev} \ra \Mdiv{\W'}^{\ev}$ that takes a family of even $\W$\nb-stable rational orbicurves and attaches a fixed $4$\nb-pointed $\PP^1$ to the section at infinity to 
obtain a $\W'$-stable curve. Clearly, this morphism is a closed immersion and we are done.
\end{proof}

\section{Quasi-admissible hyperelliptic covers}\label{S:quasi-admissible-covers}

In this section, we weld the notion of an admissible cover \cite{harris-mumford}
and that of a twisted cover \cite{ACV} in the special case of degree $2$. 
We call the result a {\em quasi-admissible hyperelliptic cover}. 
In words, a quasi-admissible  
cover is a finite locally free cover of an even rational orbicurve. 
A hyperelliptic cover differs from a twisted cover in 
that it is allowed to have non-trivial ramification over a smooth locus and it differs from an admissible cover in that branch 
points can come together. In particular, it 
is clear what happens when several branch points collide: a singularity of type $A$
appears on the cover. 

In what follows, we define the moduli stack of (pointed) quasi-admissible covers described in the previous paragraph. To motivate this definition,
we list the desired specifications of this stack: 
The stack has to be proper, singularities of its objects need to be controllable,
the deformation theory has to be tractable. 
Our moduli stack of quasi-admissible covers meets 
all of these requirements: 
By assigning weights to branch points, we can specify how many can collide
and thus specify the allowable singularities of the cover. 
When more than allowed branch points come together, we let 
the cover and the target to sprout out additional components. This makes
our moduli stack proper. Finally, the deformations of all objects are unobstructed, thus the moduli stack is actually smooth. 

\begin{definition}[Quasi-admissible covers]
\label{D:stack-H} 
Define $\H \ra \Sch_{\mathbb{K}}$ to be the stack whose objects over a scheme $T$ are the diagrams 
\begin{equation}\label{E:stack-H}
\xymatrix{
\X 
\ar[r]^{\varphi}  & \Y \ar[d]_{\pi} & D \ar@{_{(}->}[l] \\ 
& T\ar@/_1pc/[u]_{\tau} \ar@/^1pc/[ul]^{(\tau_1, \tau_2)} & }
\end{equation}
satisfying the following properties: 
\begin{enumerate} 
\item
$\pi\co (\Y; \tau, D) \ra T$ is a proper flat family of even rational orbicurves.

\item
$\varphi$ is a finite locally free morphism of degree $2$, 
branched exactly over {\em the branch divisor} $D$
and \'{e}tale elsewhere. 

\item
If $\tau$ is odd, that is, locally \'{e}tale on $T$ we have
$\tau\co \spec A\ra \left[ \spec A[x]\  / \mu_2\right]$, 
then $\varphi$ looks like an \'{e}tale cover 
\[
\left[\spec A[x,t]/(t^2-u) \ / \mu_2\right]\ra \left[ \spec A[x]\  / \mu_2\right],
\]
where $u\in A^{\times}$ and the action is $(x,t)\ra (-x,-t)$.

\item\label{C:sections-tau} 
If $\tau$ is even, then there are sections
 $\tau_1,\tau_2\co T\ra \X$ satisfying 
 $\varphi^*(\tau)=\tau_1+\tau_2$.

\item \label{C:nodes-local} 
Over an odd node $\left[ \spec A[x,y]/(xy) \ / \mu_2 \right]$ of $\Y$ the morphism $\varphi$ looks like
an \'etale cover
\[
\left[\spec A[x,y,t]/(xy, t^2-u) \ /\mu_2\right] \ra\left[\spec A[x,y]/(xy) \ / \mu_2 \right],
\] where $u\in A^{\times}$ and the action is $(x,y,t)\ra (-x,-y,-t)$.

\end{enumerate}
The shorthand notation for an object in $\H$ is $(\varphi\co \X \ra \Y; \tau, D)$.
We call $\H$ the {\em stack of quasi-admissible covers}.
\end{definition}
If $\varphi\co \X \ra \Y$ is a quasi-admissible cover over a field $\mathbb{K}$, 
then passing to the morphism
between coarse moduli spaces $\phi\co X \ra Y$, we see that over the 
odd section at infinity
$\phi$ looks like $\spec \mathbb{K}[x^2,xt,t^2]/(t^2-u) \ra \spec \mathbb{K}[x^2]$,
 which becomes 
$\spec \mathbb{K}[X,Y^2-uX]\ra \spec \mathbb{K}[X]$
after the substitution $X=x^2, Y=xt$. Thus, $\phi$ is ramified over the odd section at 
infinity. When this happens, we denote $\tau_1=\phi^{-1}(\tau)$. Evidently, 
$\tau_1$ defines a section of $X\ra T$.  


By realizing $\X$ as a locally principal
subscheme of an $\AA^1$\nb-bundle over $\Y$, we see that the only 
singularities of $\X$ lying
over smooth points of $\Y$ are of type $A$ ($y^2=x^{k+1}, \ k\geq 1$).

\begin{definition}[Pointed quasi-admissible covers]
\label{D:stack-H1} 
Let $\H^\chi \ra \Sch_{\mathbb{K}}$ be the stack whose objects over a scheme $T$ are diagrams
\begin{equation}\label{E:stack-H1}
\xymatrix{
\X 
\ar[r]^{\varphi}  & \Y \ar[d]_{\pi} & D \ar@{_{(}->}[l]  \\ 
& T\ar@/_1pc/[u]_{\tau} \ar@/_0.5pc/[ul]^{\chi} \ar@/^1pc/[ul]^{(\tau_1, \tau_2)} }
\end{equation}
such that the diagram obtained by forgetting $\chi$ satisfies Definition \ref{D:stack-H}. 
The shorthand notation for an object in $\H^{\chi}$ is $(\varphi\co \X \ra \Y; \tau, \chi, D)$.
We call $\H^\chi$ the {\em stack of pointed quasi-admissible covers}.
\end{definition}

\begin{lemma}\label{L:splitting} In Definition \ref{D:stack-H}, we have 
$
\X\simeq\Spec_{\Y} (\O_\Y\oplus \L^{-1}),
$
where $\L$ is 
a line bundle satisfying $\L^{2}=\O_\Y(D)$ and 
$\tau^*\L\simeq \O_T$.
The $\O_\Y$-algebra structure on $\O_\Y\oplus \L^{-1}$ 
is given by $\L^{-2} \stackrel{\cdot D}{\ra}\O_\Y$. 

\end{lemma}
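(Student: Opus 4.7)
The plan is the standard cyclic cover construction, adapted to the orbifold setting: construct $\L$ as the dual of the trace-zero subsheaf of $\varphi_*\O_\X$, identify $\L^{\otimes 2}$ with $\O_\Y(D)$ via the discriminant of $\varphi$, and read off the trivialization along $\tau$ from the local models provided by Definition \ref{D:stack-H}.

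Since $\varphi$ is finite locally free of degree $2$ and $\textrm{char}\,\mathbb{K} = 0$, the trace map $\tfrac{1}{2}\Tr\co \varphi_*\O_\X \to \O_\Y$ splits the unit $\O_\Y\hookrightarrow \varphi_*\O_\X$, yielding
\[
\varphi_*\O_\X \simeq \O_\Y \oplus \MM,
\]
where $\MM := \ker(\tfrac{1}{2}\Tr)$ is locally free of rank $1$. Setting $\L := \MM^{-1}$ and using that $\varphi$ is affine, we immediately get $\X \simeq \Spec_\Y(\O_\Y \oplus \L^{-1})$. The Galois involution acts by the sign character on $\MM$, so the algebra multiplication on $\varphi_*\O_\X$ sends $\MM\otimes \MM$ into the invariant piece $\O_\Y$, producing a map $\mu\co \MM^{\otimes 2} \to \O_\Y$. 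A direct local computation in the standard model $\spec A[t]/(t^2-f) \to \spec A$ identifies the vanishing locus of $\mu$ with the branch divisor of $\varphi$, so $\mu$ becomes an injection with image $\O_\Y(-D)$, i.e. $\MM^{\otimes 2} \simeq \O_\Y(-D)$. Taking duals gives $\L^{\otimes 2} \simeq \O_\Y(D)$, and the $\O_\Y$-algebra structure on $\O_\Y \oplus \L^{-1}$ is exactly the one induced by the composition $\L^{-2} \simeq \O_\Y(-D)\hookrightarrow \O_\Y$.

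It remains to verify $\tau^*\L \simeq \O_T$. When $\tau$ is even, the sections $\tau_1,\tau_2\co T\to \X$ of condition (4) split $\tau^*\varphi_*\O_\X \simeq \O_T\oplus \O_T$ as $\O_T$-algebras; reading off the trace-zero summand gives $\tau^*\MM \simeq \O_T$. When $\tau$ is odd, the section $\tau$ factors through the trivial $\mu_2$-torsor in the residue gerbe at the odd point; in the local model of condition (3), $\MM$ is generated by $t$ with $t^2 = u \in A^{\times}$, and the triviality of the $\mu_2$-torsor pulled back by $\tau$ absorbs the non-trivial $\mu_2$-character that $\MM$ carries at the odd point (cf. Lemma \ref{L:root-bundle-unique}), again giving $\tau^*\MM \simeq \O_T$.

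The main obstacle I anticipate is the $\mu_2$-character bookkeeping at odd points: the étale local forms in conditions (3) and (5) of Definition \ref{D:stack-H} are precisely what guarantee that the trace-zero line bundle $\MM$ produced in the first step carries the prescribed non-trivial character at each orbipoint of $\Y$, and this compatibility is what ensures the trivialization along $\tau$ in the odd case is canonical rather than depending on auxiliary choices.
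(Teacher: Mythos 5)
Your overall strategy coincides with the paper's: split $\varphi_*\O_\X\simeq\O_\Y\oplus\MM$ by the trace (characteristic $0$), identify $\MM^{\otimes 2}$ with $\O_\Y(-D)$ via the multiplication map, and recover $\X$ as $\Spec_{\Y}$ of this algebra using that $\varphi$ is finite. Your treatment of the even case of the trivialization along $\tau$ is also correct, and in fact canonical: condition (4) of Definition \ref{D:stack-H} is a \emph{global} hypothesis, the two sections split $\tau^*\varphi_*\O_\X\simeq\O_T\times\O_T$ as $\O_T$-algebras, and the trace-zero summand is the antidiagonal, hence trivial.

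The gap is in the odd case. Condition (3) of Definition \ref{D:stack-H} is only an \'{e}tale-local statement on $T$; it tells you that $\tau^*\MM$, together with the multiplication $\tau^*\MM^{\otimes 2}\ra\O_T$ (an isomorphism, since $D$ is disjoint from $\tau$), is \'{e}tale-locally isomorphic to $\left(\O_T,\, u\right)$ with $u$ a unit. Globally this datum is a $\mu_2$-torsor on $T$, equivalently a $2$-torsion line bundle, and nothing in the definition forces it to be trivial: over a base with nontrivial $\Pic(T)[2]$ one may twist a given cover by $\pi^*N$ for $N\in\Pic(T)[2]$ without disturbing any of the \'{e}tale-local conditions, and $\tau^*\MM$ then ranges over a full coset of $\Pic(T)[2]$. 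So the ``triviality of the $\mu_2$-torsor pulled back by $\tau$'' is precisely what must be \emph{arranged}, not something that comes for free from the local model. The paper handles this point differently: it sets $\L:=\MM^{-1}$ and then replaces $\L$ by $\L\otimes\pi^{*}(\tau^{*}\L^{-1})$; since $D$ misses $\tau(T)$ one has $\tau^{*}\L^{\otimes 2}\simeq\tau^{*}\O_\Y(D)\simeq\O_T$, so the twist is by a $2$-torsion line bundle pulled back from $T$ and does not disturb $\L^{\otimes 2}\simeq\O_\Y(D)$, while forcing $\tau^{*}\L\simeq\O_T$. You should either incorporate this normalization step in the odd case or confine your ``automatic'' argument to the even case, where it genuinely works.
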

\begin{proof}
By the characteristic $0$ assumption,
the trace morphism $\Tr\co \varphi_*\O_\X \ra \O_\Y$
followed by scaling by $1/2$ gives a splitting 
$\varphi_*\O_X\simeq \O_\Y\oplus \L^{-1}$, for 
some line bundle $\L$ on $\O_\Y$. Evidently, $\L^2\simeq \O_\Y(D)$. 
Since $D$ is disjoint from $\tau$, by twisting by $\pi^{*}(\tau^{*}\L^{-1})$ we
arrange for $\tau^*\L\simeq \O_T$
without violating the condition 
that $\L^2\simeq \O_\Y(D)$.
That $\X\simeq\Spec_{\Y} (\O_\Y\oplus \L^{-1})$ follows from the fact that
$\varphi$ is a finite morphism. 
\end{proof}

\begin{definition}[Branch morphisms] \label{D:branch-morphism}
The {\em branch morphism} $\brr\co \H \ra \Mdiv{}$ is a forgetful functor  
sending the family in Display \eqref{E:stack-H} to the family of divisorially marked rational curves 
$(Y; \tau, D)$.
The {\em branch morphism}
$\brr\co \H^\chi \ra \Mdiv{}$ is a forgetful functor  sending the family in Display \eqref{E:stack-H1} to the family of 
divisorially marked rational curves $(Y; \tau, \varphi(\chi(T)), D)$.
\end{definition}


Recall that $\Mdiv{\W}$ is the moduli stack of $\W$\nb-stable 
divisorially marked rational 
curves (cf. Section \ref{S:moduli-spaces}).
Below we use the branch morphism to define {\em stability conditions} for
quasi-admissible covers with $A$ and $D$ singularities.

\begin{definition}[Stable quasi-admissible covers with $A$ singularities]\label{D:stack-Hnk}

Fix an integer $n\geq 2$ and a rational number $\alpha\leq 1/2$.
Consider the 
weight vector
$\W=\left(1, \alpha^{n+1}\right)$. 

We define $\H_{n,\alpha}$ to be the moduli stack of quasi-admissible covers
 $(\varphi\co \X\ra \Y; \tau, D)$
such that $(\Y; \tau, D)$ is a $\W$-stable curves. Alternatively, 
$\H_{n,\alpha}=\H\times_{\Mdiv{}} \Mdiv{\W}$, where $\H$ maps to
 $\Mdiv{}$ via the branch morphism of Definition \ref{D:branch-morphism}.
The objects of $\H_{n, \alpha}$ are called {\em $\W$-stable quasi-admissible covers}.

If $1/(k+2)<\alpha\leq 1/(k+1)$ for some $k\in \{1,\dots, n-1\}$, then 
we denote $\H_{n,\alpha}$ by $\H[k]$. For such $\alpha$ 
the geometric points of $\H_{n,\alpha}$ are quasi-admissible covers $(\varphi\co \X\ra \Y;\tau, D)$ such that
\begin{enumerate}
\item the branch divisor $D$ is of degree $n+1$ and of weight $\alpha$,
\item the section at infinity $\tau$ is of weight $1$,
\item $(Y;\tau, D)$ is $\W$\nb-stable rational curve,
\item $\X$ has 
at worst $A_{k}$ singularities,
\item The arithmetic genus of $X$ is $\left\lfloor \dfrac{n}{2}\right\rfloor$.
\end{enumerate}
\end{definition}

\begin{definition}[Stable quasi-admissible covers with $D$ singularities]
Fix an integer $n\geq 4$ and rational numbers
$\alpha\in (0,1/2]$ and $\beta\in (0,1-\alpha]$. 
Consider the weight vector $\W=(1, \beta, \alpha^n)$. 
We define the stack $\H_{n,\alpha,\beta}$ to be the fiber product 
$\H^\chi\times_{\Mdiv{}} \Mdiv{\W}$, 
where $\H^\chi$ maps to $\Mdiv{}$ via the branch morphism of Definition \ref{D:branch-morphism}.
The objects of $\H_{n, \alpha, \beta}$ are called {\em $\W$-stable pointed quasi-admissible covers}.

We now explicate the meaning of $\W$\nb-stability for pointed 
quasi-admissible covers.
To this end, consider the unique integers $k$ and $\ell$ such that $\alpha$ 
and $\beta$ satisfy inequalities
\begin{eqnarray}
\label{inequalities}
\frac{1}{k+2} < \alpha \leq \frac{1}{k+1}, \\ 
1-(\ell+1)\alpha< \beta \leq 1-\ell \alpha. \notag
\end{eqnarray}
If $(\varphi\co \X\ra \Y; \tau, \chi, D)$ is a geometric point of $\H_{n,\alpha,\beta}$, then
\begin{enumerate}
\item $\X$ has at worst $A_{k}$ singularities, since at most $k+1$ branch points can coalesce,
\item the marked point $\chi$ can coalesce with (at worst)
 $A_{\ell-1}$ singularity on $\X$,
 \item The arithmetic genus of $X$ is $\left\lfloor (n-1)/2 \right \rfloor$.
\end{enumerate}
Because of the above, we denote $\H_{n,\alpha,\beta}$ by $\H[k,\ell]$ when
emphasizing the 
singularities allowed on $X$. 
\end{definition}
\begin{example}[{A case study of $\H_{n}[n-1]$ and $\H_{n}[n-1,n-1]$}]
\label{E:weighted-projective-spaces} We begin our study of moduli stacks 
of quasi-admissible covers by proving Parts (4) of 
Main Theorems \ref{main1} and \ref{main2}. 
First, we deal with the case of $\H_{n}[n-1]$,
the moduli stack of $\W$-stable quasi-admissible covers, where $\W=(1, (1/n)^{n+1})$. 
The stability assumption on  $(\varphi\co \X\ra \Y; \tau, D)$ implies that 
$\Y$ is necessarily $\PP^1$ and
the support of the branch divisor $D$ has at least $2$ distinct points, i.e.,
 not all $n+1$ points
can come together.
We denote by $\infty$ the section at infinity. Since it has weight $1$, no branch point can lie at $\infty$.
Further analysis depends on the parity of $n$. 

We first treat the case 
of $n$ odd.
In this case, a hyperelliptic cover is uniquely determined by a branch divisor. It remains to describe 
the moduli space of degree $n+1$ divisors on $\AA^{1}$ not supported at a single point. In characteristic
$0$ every such divisor can be brought into the {\em normal form} 
\[
x^{n+1}+a_{n-1}x^{n-1}+\dots+a_{0}=0,
\] 
with not all $a_i$ zero.
A subgroup of automorphisms of $\PP^{1}$ fixing $\infty$ and 
preserving the normal form is $\GG_{m}$. It 
acts on 
$
\spec \mathbb{K}[a_{n-1},\dots, a_{0}] \smallsetminus  \mathbf{0}
$
diagonally with weights $(2,\dots, n+1)$. It follows that our moduli space is the weighted projective stack 
$\P(2,3,\dots, n+1)$.

A reader might recall that $\P(2,3,\dots, n+1)$ is the stack whose objects over a scheme $T$ are 
data $(\L, s_2, \dots, s_{n+1})$ of a line bundle $\L\in \Pic(T)$ and sections 
$s_m\in \HH^0(T,\L^{m})$. 
We briefly sketch the proof of the equivalence between two categories.

Given a $\W$-stable cover 
$\left(\varphi\co \X \ra \Y; \tau, D \right)$, where $\W=(1,(1/n)^{n+1})$,
all geometric fibers of $\pi\co \Y\ra T$ 
are isomorphic to $\PP^1$. The existence of $\tau$ implies that $\pi \co \Y\ra T$ is a $\PP^1$\nb-bundle.
Since $D\subset \Y$ is a $T$\nb-flat divisor on $\Y$ disjoint from $\tau(T)$, 
by Lemma \ref{L:zero-section}, there exists a section $\sigma\co T\ra \Y$, also disjoint from $\tau(T)$. Consider the short exact sequence
\begin{align}\label{E:bundle-ses}
0 \ra \O_\Y(\sigma(T)-\tau(T))\ra \O_\Y(\sigma(T))\ra \O_{\tau(T)}(\sigma(T))\ra 0.
\end{align}
Pushing \eqref{E:bundle-ses} via $\pi$ to $T$, we obtain a split
short exact sequence
\[
0 \ra \L \ra \E \ra \O_T\ra 0,
\]
where $\L\simeq \tau^*\O_\Y(-\tau(T))\simeq \sigma^*\O_\Y(\sigma(T))$ and 
$\E=\pi_*\O_\Y(\sigma(T))$.
It follows that $\Y=\PP(\E)=\PP(\O_T\oplus \L)$.
The divisor $D$ is
the vanishing locus of a section $s_D\in \HH^0(\Y, \O_\Y(D))$. Using $D\sim (n+1)\sigma(T)$, we rewrite  
\begin{align*}
\HH^0(\Y, D)=\HH^0(T, \pi_*\O_\Y(D))  
=\HH^0(T, \Sym^{n+1}(\O_T\oplus \L))=\bigoplus_{m=0}^{n+1} \HH^0(T, \L^m).
\end{align*}
Hence, $D$ defines a section of $\L, \L^2, \dots, \L^{n+1}$ each. It is easy to check that the section of $\L$ defined by $D$ is actually $0$. We conclude that a $\W$\nb-stable
cover defines a line bundle $\L$ and sections $s_m\in \HH^0(T, \L^m)$ for each $m=2,\dots, n+1$.
Thus, there is an equivalence between the category of $\W$-stable
covers and $\P(2,3,\dots, n+1)$ in the case of odd $n$. 

The case of $n$ even is treated similarly: The only minor difference is 
that the section at infinity $\tau$ is now odd. Given 
a $\W$\nb-stable cover $(\varphi\co \X\ra \Y; \tau, D)$, the
section $\tau(T)$ lies in the branch locus of the map 
$\phi\co X\ra Y$ between the coarse moduli spaces. 
As in the case of odd $n$, $Y$ is a $\PP^1$\nb-bundle and
the 
existence of the 
divisor $D\subset Y$ gives a section $\sigma\co T\ra Y$ disjoint from 
$\tau(T)$.
We have 
$\pi_*\O_Y(\sigma(T))=\O_T\oplus \L$. As before, the divisor
$D$ defines sections $s_m \in \HH^0(T,\L^m)$ for each 
$m=2,\dots, n+1$. It remains to observe that by our construction
the line bundle
$\L\simeq \tau^*\O_Y(-\tau(T))$ has a square root because
$\O_Y(\tau(T))$ pulls back to the line bundle $\O_X(2\tau_1(T))$
via $\phi$. Hence, we can write $\L\simeq \mathcal{M}^2$ and 
we conclude that giving a family of $\W$\nb-stable covers over $T$
is equivalent to giving a line bundle $\mathcal{M}$ on $T$ and 
sections $s_m\in \HH^0(T, \mathcal{M}^{2m})$ for each $m=2,\dots, n+1$. This shows that $\H_{n}[n-1]\simeq \P(4, 6, \dots, 2(n+1))$.

We remark that  in positive characteristic the stack 
$\H_{n}[n-1]$ can have nonreduced stabilizers, as Example \ref{E:infinite-automorphisms} shows. Furthermore, in positive characteristic,
$\H_{n}[n-1]$ is in general not isomorphic to $\P(2,3,\dots,n+1)$ or
$\P(4, 6, \dots, 2(n+1))$.
We refer to \cite{arsie-vistoli} for the 
description of the stack of degree $2$ cyclic covers of $\PP^1$ in this case. 

Finally, we briefly sketch what happens in the case of $\H_{n}[n-1,n-1]$. Take $\beta=\alpha=1/n$, and for $\W=(1,\beta, \alpha^{n})$,
consider a $\W$\nb-stable pointed quasi-admissible cover
$(\varphi\co \X\ra \Y; \tau, \chi, D)$. 
The coarse moduli space $Y\ra T$ is a $\PP^1$\nb-bundle
with a section $\sigma:=\varphi\circ\chi\co T\ra Y$ 
disjoint from $\tau(T)$ and such 
that $D\sim n\sigma$. Moreover, we have $\pi_*\O_Y(\sigma(T))=\O_T\oplus\L$, where $\L\simeq \tau^*\O_Y(-\tau(T))\simeq \sigma^*\O_Y(\sigma(T))$. As before, we see that $D$ defines sections
$s_m\in \HH^0(T,\L^m)$ for $m=1,\dots,n$ and that $\L$ has 
a square root when $n$ is odd. It remains to observe that 
in the presence of the section $\chi\co T\ra X$, 
the line bundle $\sigma^*\O_Y(D)$ has a square root because
$D$ pulls back via $\varphi$ 
to the square of the ramification divisor 
$\text{Ram}(\varphi)$.
Since $\L^{n}\simeq \sigma^*\O_Y(n\sigma(T))\simeq \sigma^*\O_Y(D)$,
we conclude that $\chi^*(\text{Ram}(\varphi))$ gives us a section 
of the square root of $\L^{n}$. It is now clear that 
\[
\H_{n}[n-1,n-1] =\begin{cases} \P(\frac{n}{2}, 1,2,3,\dots,n-1), &\text{ if $n$ is even,}\\
 \P(n, 2,4,6,\dots,2n-2), &\text{ if $n$ is odd}.
 \end{cases}
\]

We note that the explicit presentation of $\H_{n}[n-1]$ given above 
in the case of odd $n$ has an added advantage of proving that 
$\H_{n}[n-1]\simeq \left[ \Def(A_n) \smallsetminus \mathbf{0} \ / \GG_m\right]$, where the $\GG_m$ action is given by Equation \eqref{E:A-action}. 
Similar arguments apply to give the following result whose proof we omit. 
\begin{corollary}\label{C:pinkham-blowup}
For $\GG_m$ actions on $\Def(A_n)$ and $\Def(D_n)$ given in Equations \eqref{E:A-action} --\eqref{E:D-action}, 
\begin{enumerate}
\item $\H_{n}[n-1]\simeq \left[ \Def(A_n) \smallsetminus \mathbf{0} \ / \GG_m\right]$,
\item $\H_{n}[n-1,n-1]\simeq \left[ \Def(D_n) \smallsetminus \mathbf{0} \ / \GG_m\right]$.
\end{enumerate}
\end{corollary}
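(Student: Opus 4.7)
The plan is to reduce both statements to the explicit computations already carried out in Example~\ref{E:weighted-projective-spaces}, by writing down a tautological $\GG_m$-equivariant family of (pointed) quasi-admissible covers over the punctured versal base and then matching weights.

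For part~(1), I would view the versal family $y^{2} = x^{n+1} + a_{n-1}x^{n-1} + \dots + a_{0}$ as a family of double covers of $\AA^{1}_{x}$ parametrized by $\Def(A_n) \smallsetminus \mathbf{0}$, and compactify by adding $\infty$ as the section at infinity $\tau$. The condition $(a_0,\dots,a_{n-1}) \neq \mathbf{0}$ guarantees that the degree $n+1$ branch divisor has at least two distinct points in its support, so the resulting cover is $\W$-stable for $\W=(1,(1/n)^{n+1})$ with at worst $A_{n-1}$ singularities, i.e.\ an object of $\H_n[n-1]$. The $\GG_m$-action \eqref{E:A-action} is designed precisely so that the substitutions $x \mapsto \lambda^{\mathrm{wt}(x)}x$, $y \mapsto \lambda^{\mathrm{wt}(y)}y$ identify the pullback of the family by $\lambda$ with the original, so the induced morphism $\Def(A_n)\smallsetminus \mathbf{0} \to \H_n[n-1]$ is $\GG_m$-equivariant and descends to the stack quotient. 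Since the weights of $\GG_m$ on $(a_{n-1},\dots,a_0)$ in \eqref{E:A-action} are exactly $(2,3,\dots,n+1)$ for odd $n$ and $(4,6,\dots,2(n+1))$ for even $n$ --- the same weights that appear in the presentation of $\H_n[n-1]$ as $\P(2,3,\dots,n+1)$ or $\P(4,6,\dots,2(n+1))$ from Example~\ref{E:weighted-projective-spaces} --- the induced map of weighted projective stacks is an isomorphism.

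For part~(2), the key preliminary step --- which uses characteristic $0$ essentially --- is the substitution $Y := xy + b/2$, which, after multiplying the versal $D_n$ equation by $x$ and completing the square, yields
\[
Y^{2} = x^{n} + a_{n-2}x^{n-1} + \dots + a_{0}x + b^{2}/4.
\]
This realizes the smoothing of $D_n$ as the double cover of $\AA^1_x$ branched along a degree $n$ polynomial, and the branch of $D_n$ along the $y$-axis becomes a single marked point $\chi$ on the cover defined by $x=0,\ Y=b/2$. Compactifying at $\infty$ produces a pointed $\W$-stable quasi-admissible cover for $\W=(1,1/n,(1/n)^{n})$ with at worst $A_{n-1}$ and $D_{n-1}$ singularities, i.e.\ an object of $\H_n[n-1,n-1]$. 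Checking that this construction is $\GG_m$-equivariant for \eqref{E:D-action} --- the key point being that $Y$ carries weight $n/2$ or $n$, equal to half the weight of the branch polynomial --- and matching weights with the presentation $\P(n/2,1,2,\dots,n-1)$ or $\P(n,2,4,\dots,2n-2)$ from Example~\ref{E:weighted-projective-spaces} produces the claimed isomorphism.

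The main obstacle is the $D_n$ case: one must verify that the section $\chi$ defined by $(x,Y)=(0,b/2)$ actually extends across the locus $\{b=0\}$ as a section of $\X$, not only of its coarse space, and that it behaves correctly at points where it collides with a branch point --- this is precisely where the stability window permits a $D_{n-1}$ singularity. Once this and the equivariance computation are in place, matching weights against Example~\ref{E:weighted-projective-spaces} is essentially formal and yields the two claimed isomorphisms simultaneously.
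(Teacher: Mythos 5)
Your proposal is correct and is essentially the argument the paper intends: the corollary is deduced from the explicit presentations of $\H_n[n-1]$ and $\H_n[n-1,n-1]$ in Example \ref{E:weighted-projective-spaces} by observing that the $\GG_m$-weights on $(a_{n-1},\dots,a_0)$ in \eqref{E:A-action} (resp.\ on $(b,a_{n-2},\dots,a_0)$ in \eqref{E:D-action}) coincide with the weights of those weighted projective stacks, the tautological family over the punctured versal base furnishing the equivariant identification. Your completing-the-square substitution relating the $D_n$ versal family to a pointed double cover is exactly the content of the paper's Proposition \ref{P:A=D}, and the section $\chi=(x,Y)=(0,\pm b/2)$ is given by polynomial formulas over $\Def(D_n)$, so the extension across $\{b=0\}$ you flag as the main obstacle is automatic.
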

\end{example}
 The following two theorems establish 
that $\H_{n}[k]$ and $\H_{n}[k,\ell]$ are proper 
and smooth Deligne-Mumford stacks over $\mathbb{K}$.
\begin{theorem} 
\label{T:DM}
The stacks $\H_{n}[k]$ and $\H_{n}[k,\ell]$ are proper Deligne-Mumford stacks over $\mathbb{K}$.
\end{theorem}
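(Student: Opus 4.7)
The plan is to realize both $\H_n[k]$ and $\H_n[k,\ell]$ as finite covers of stacks $\Mdiv{\W}^{\ev}$ of even rational orbicurves for suitable weight vectors $\W$ (an equivalence in the $A$-case), and then invoke Theorem \ref{T:even-orbicurves} together with Corollary \ref{C:even-orbicurves-DM}.

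For the $A_k$ case, set $\W = (1, \alpha^{n+1})$ with $\alpha \in (1/(k+2), 1/(k+1)]$. I would define a morphism $\H_n[k] \to \Mdiv{\W}^{\ev}$ by $(\varphi \co \X \to \Y; \tau, D) \mapsto (\Y; \tau, D)$; the target lies in $\Mdiv{\W}^{\ev}$ by Definitions \ref{D:stack-H} and \ref{D:stack-Hnk}. I claim this is an equivalence. By Lemma \ref{L:splitting}, any quasi-admissible cover of $(\Y; \tau, D)$ has the form $\Spec_\Y(\O_\Y \oplus \L^{-1})$ for a line bundle $\L$ on $\Y$ satisfying $\L^{\otimes 2} \simeq \O_\Y(D)$ and $\tau^*\L \simeq \O_T$. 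Lemma \ref{L:root-bundle-unique} provides existence and uniqueness of such $\L$ fiberwise; the family version is obtained by the cohomology-and-base-change technique of Lemma \ref{L:even-line-bundle}, adapted from semistable rational curves to even rational orbicurves, with the trivialization $\tau^*\L \simeq \O_T$ rigidifying the gluing. This defines a quasi-inverse, yielding the equivalence. The range $\alpha \in (1/(k+2), 1/(k+1)]$ is precisely the range in which $\W$-stability forces at most $k + 1$ branch points to coalesce, producing at worst $A_k$ singularities.

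For the $D_\ell$ case, set $\W = (1, \beta, \alpha^n)$ and consider the forgetful morphism $\H_n[k,\ell] \to \Mdiv{\W}^{\ev}$ sending $(\varphi \co \X \to \Y; \tau, \chi, D) \mapsto (\Y; \tau, \varphi\chi, D)$. By the $A$-case argument the cover $\varphi \co \X \to \Y$ is reconstructed uniquely from the target. The remaining datum is a lift of $\varphi\chi$ to a section $\chi$ of $\X \to T$, namely a $T$-point of the fiber product $\X \times_\Y \varphi\chi(T)$. Since $\varphi$ has degree $2$, this fiber product is a finite flat $T$-scheme of degree $2$ (\'etale outside the locus where $\varphi\chi$ meets $D$, ramified on it). Therefore the forgetful morphism $\H_n[k,\ell] \to \Mdiv{\W}^{\ev}$ is representable and finite.

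By Theorem \ref{T:even-orbicurves} and Corollary \ref{C:even-orbicurves-DM}, the stacks $\Mdiv{\W}^{\ev}$ are proper Deligne-Mumford stacks over $\mathbb{K}$ for either weight vector above. Since representable finite morphisms preserve both properness and the Deligne-Mumford property, it follows that $\H_n[k]$ and $\H_n[k,\ell]$ are proper Deligne-Mumford stacks. The principal technical obstacle is the family version of Lemma \ref{L:root-bundle-unique}: one must verify that the cohomology-and-base-change argument of Lemma \ref{L:even-line-bundle} extends from families of semistable rational curves to families of even rational orbicurves. This reduces to the vanishing $\HH^1(\Y_t, \O_{\Y_t}) = 0$ on each geometric fiber $\Y_t$, which holds because the coarse moduli of $\Y_t$ is a connected nodal rational curve of arithmetic genus $0$ and, in characteristic zero, the cohomology of $\O$ agrees on an orbicurve and on its coarse moduli space.
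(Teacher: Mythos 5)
Your proposal is correct and follows essentially the same route as the paper: both reduce to the properness and Deligne--Mumford property of $\Mdiv{\W}^{\ev}$ (Theorem \ref{T:even-orbicurves} and Corollary \ref{C:even-orbicurves-DM}) via the branch morphism, using the uniqueness of the square root $\L$ of $\O_\Y(D)$ to see that the fibers of $\H_n[k]\ra\Mdiv{\W}^{\ev}$ are trivial and that $\H_n[k,\ell]\ra\H_n[k]$ (or directly to $\Mdiv{(1,\beta,\alpha^n)}^{\ev}$, as you do) is finite. One small inaccuracy: when $n$ is even the $A$-case branch morphism is not an equivalence but a $\mu_2$-gerbe, since the hyperelliptic involution is an extra automorphism over an odd section at infinity (the paper flags exactly this); this does not affect properness or the Deligne--Mumford property, so your conclusion stands.
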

\begin{proof}
Evidently, the forgetful natural transformation 
$\H_{n}[k,\ell] \ra \H_{n}[k]$ is 
representable by proper Deligne-Mumford stacks. 
Thus we reduce to the case of $\H_{n}[k]$. 
By Definition \ref{D:stack-H}, the branch morphism $\brr\co \H_{n}[k]\ra \Mdiv{\W}$ 
factors through $\Mdiv{\W}^{\ev}$. Given a family of even rational orbicurves,
there exists a unique quasi-admissible cover over it (with $\mu_{2}$ of
extra automorphisms if $n$ is even). 
Thus, the morphism $\H_{n}[k]\ra \Mdiv{\W}^{\ev}$ is representable by 
proper Deligne-Mumford stacks.
Finallly, $\Mdiv{\W}^\ev$ is a proper Deligne-Mumford 
stack over $\mathbb{K}$ by Theorem \ref{T:even-orbicurves}. This finishes the proof.
\end{proof}

\subsection{Deformation theory}

\begin{theorem} 
\label{T:H-smooth-stack}
$\H_{n}[k]$ and $\H_{n}[k,\ell]$ are smooth Deligne-Mumford stacks 
over $\mathbb{K}$.
\end{theorem}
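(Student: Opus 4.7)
The plan is to reduce smoothness of $\H_n[k]$ to that of $\Mdiv{\W}^\ev$ (established in Theorem \ref{T:even-orbicurves}) via the branch morphism, and then to handle $\H_n[k,\ell]$ by analyzing the forgetful morphism that drops the marked point $\chi$. In both cases smoothness will follow because the extra structure added on top of an even rational orbicurve is rigid or, in the pointed case, controlled by the smooth versal deformation spaces recorded in Section \ref{S:notations}.

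First I would show that the branch morphism $\brr \co \H_n[k] \ra \Mdiv{\W}^\ev$ is \'{e}tale. By Lemma \ref{L:splitting}, a quasi-admissible cover with base $(\Y;\tau,D)$ is determined up to isomorphism by a line bundle $\L$ on $\Y$ satisfying $\L^{\otimes 2} \simeq \O_\Y(D)$ and $\tau^{*}\L \simeq \O_T$, with the $\O_\Y$-algebra structure on $\O_\Y \oplus \L^{-1}$ then forced. Lemma \ref{L:root-bundle-unique} furnishes uniqueness of $\L$ on each geometric fiber, and the argument of Lemma \ref{L:even-line-bundle}, applied to the orbicurve $\Y$ whose orbinode characters compensate for the parity obstruction, gives existence and uniqueness of $\L$ \'{e}tale-locally on $T$. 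The only residual ambiguity is the global sign $\L \leftrightarrow \L^{-1}\otimes \pi^{*}(\text{trivial})$, which contributes a $\mu_2$-gerbe structure precisely when $\tau$ is even (i.e.\ $n$ is even) and is rigidified by $\tau^{*}\L \simeq \O_T$ when $\tau$ is odd. Either way, $\brr$ is \'{e}tale, and since $\Mdiv{\W}^\ev$ is smooth by Theorem \ref{T:even-orbicurves}, $\H_n[k]$ is smooth.

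Second I would factor through the forgetful morphism $f\co \H_n[k,\ell] \ra \H_n[k]$ that drops $\chi$, and show $f$ is smooth of relative dimension one. Away from $\Sing(\X)$, $f$ is locally isomorphic to the smooth relative family $\X^{\smooth} \ra \H_n[k]$, so smoothness is immediate. The nontrivial case is where $\chi$ lies at an $A_j$-singularity of $\X$ with $j \leq \ell-1$. Here I would invoke the equivalence between deformations of an $A_j$ singularity equipped with a section through its singular point and deformations of a $D_{j+1}$ singularity, promised in Section \ref{S:D-singularities}: under this equivalence the pointed deformation problem is governed by the Tjurina algebra $\Def(D_{j+1}) = \spec \KK[b,a_0,\dots,a_{j-1}]$, which is manifestly smooth of the correct dimension. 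Combining this smooth local picture with unobstructedness of the ambient deformations of the cover yields smoothness of $f$ in all cases, and with Step 1 this gives smoothness of $\H_n[k,\ell]$.

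The main obstacle is the singular-point case in Step 2. The rest is essentially bookkeeping on top of Theorem \ref{T:even-orbicurves} and the splitting lemmas, but the $A_j$-with-section versus $D_{j+1}$ equivalence is a genuine deformation-theoretic input that must be proved (in Section \ref{S:D-singularities}) before smoothness of $\H_n[k,\ell]$ over the $D$-boundary can be asserted. Once that equivalence is available and combined with the explicit smoothness of the versal bases recorded in Equations \eqref{E:A-action}--\eqref{E:D-action}, the theorem follows.
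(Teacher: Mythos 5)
Your Step 1 is a legitimate alternative route for $\H_n[k]$: the paper proves smoothness of both stacks at once by pure deformation theory (Lemma \ref{L:versality} reduces everything to an adapted affine cover, and one then checks unobstructedness chart by chart), whereas you deduce smoothness of $\H_n[k]$ from Theorem \ref{T:even-orbicurves} via the observation --- which the paper itself records only in the properness proof of Theorem \ref{T:DM} --- that over an even rational orbicurve the quasi-admissible cover is unique up to a $\mu_2$ of automorphisms, so $\brr$ is \'{e}tale onto $\Mdiv{\W}^{\ev}$. Two bookkeeping points: $\tau$ is \emph{even} exactly when $n$ is \emph{odd} (the relevant degree is $n+1$), so your parenthetical reverses the parity, and the residual $\mu_2$ is killed when $\tau$ is even by the ordered pair $(\tau_1,\tau_2)$ of Definition \ref{D:stack-H}, not by the normalization $\tau^*\L\simeq \O_T$. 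More substantively, Theorem \ref{T:even-orbicurves} asserts smoothness of $\Mdiv{\W}^{\ev}$ only when the total marked degree is even, and Corollary \ref{C:even-orbicurves-DM} asserts only the Deligne--Mumford property in the odd case, so for $n$ even you owe an extra word there.

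The genuine gap is Step 2. The forgetful morphism $f\co \H_{n}[k,\ell]\ra\H_{n}[k]$ is not smooth of relative dimension one: its fiber over a cover $(\varphi\co\X\ra\Y;\tau,D)$ is (a contraction or sprouting modification of) the universal position of $\chi$ on $\X$, hence is essentially the curve $\X$ itself. Over the divisor $\delta_{\irr}$, where $\X$ acquires an $A_1$ or worse singularity, this fiber is a singular curve, and a morphism with a singular fiber over a reduced point of a smooth base is never smooth. So smoothness of $\H_n[k,\ell]$ cannot be deduced from smoothness of $\H_n[k]$ together with smoothness of $f$. Your local input --- that deformations of an $A_j$ singularity with a section agree with deformations of $D_{j+1}$ and hence are unobstructed (Proposition \ref{P:A=D}) --- is exactly the right ingredient, but it must feed into a direct local-to-global argument for the deformation functor of the pointed cover itself. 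That is what Lemma \ref{L:versality} and Proposition \ref{P:versality} provide: the obstruction to globalizing local deformations lies in $\HH^2(Y,T^1\otimes I)=0$, after which one verifies formal smoothness chart by chart --- orbinode charts via the $\mu_2$-equivariant two-term resolution of $\Omega^1$, charts containing $\varphi(\chi)$ via the explicit lifting of $f(x)=xg(x)+a_0^2$, and the remaining smooth charts trivially. Note in particular that your case analysis in Step 2 omits the orbinode charts entirely; in the paper's proof these require their own argument and are not covered by your Step 1 once the reduction through $f$ fails.
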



Consider a pointed quasi-admissible cover over $T=\spec \mathbb{K}$ as in Display \eqref{E:stack-H1}. 
Note that the divisor $D$ is uniquely 
determined by $\varphi$. 
Similarly, when $\tau$ is even, the section $\tau_1$ determines $\tau_2$. 
Therefore, we reduce to the study of the deformation theory of 
\begin{align}
\label{E:def}
\xymatrix{
\X
\ar[r]^{\varphi}  & \Y \ar[d]_{\pi} &  \\ 
& \spec T\ar@/_0.5pc/[ul]^{\chi} \ar@/^1pc/[ul]^{(\tau_1)} 
\ar@/_1pc/[u]_{\tau}
}
\end{align}
Note that the deformations of the closed immersions 
$\tau_1\co \spec \mathbb{K}\ra \X$, if $\tau$ is even, and of 
$\tau\co \spec \mathbb{K}\ra \Y$, if $\tau$ is odd, are unobstructed
because $\tau$ lies in $\Y^{\smooth}\smallsetminus D$ by Definition \ref{D:stack-H}.
It follows that the obstructions to deforming Diagram \eqref{E:def} 
vanish as long as the diagram
\begin{align}
\label{E:def-nosection}
\xymatrix{
\X
\ar[r]^{\varphi}  & \Y \ar[d]_{\pi} &  \\ 
& \spec T\ar[ul]^{\chi} }
\end{align}
 is unobstructed. 
 
\def\mi{\mathfrak{m}}

Suppose now that 
$(\varphi\co \X \ra \Y; \chi)$ is a cover as in 
Diagram \eqref{E:def-nosection} over $T=\spec A$, where
$A$ is a 
local Artinian $\mathbb{K}$\nb-algebra. Set 
$D=\brr(\varphi)$. We say that an open affine cover 
$\UU=\{U_i\}_{i\in S}$ of $\Y$ 
is {\em adapted} if every point of $D+ \varphi(\chi)+\Sing(\Y)$
is contained in exactly one $U_i$. 

\begin{lemma}
\label{L:versality}
 Let $(\varphi\co \X \ra \Y; \chi)$ be a quasi-admissible 
cover over a local Artinian $\mathbb{K}$-algebra $A$, and
$\UU=\{U_i\}_{i\in S}$ be an adapted affine cover of $\Y$. Consider 
a $\mathbb{K}$-extension $0\ra I \ra A' \ra A \ra 0$.
Then any collection of deformations of $\varphi\vert_{U_i}$ 
over $A'$, for every $i\in S$,
gives rise to a global deformation of $(\varphi\co \X \ra \Y; \chi)$ over $A'$.
\end{lemma}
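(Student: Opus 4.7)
My plan is to argue by \v{C}ech gluing of the local deformations, using the adaptedness hypothesis to reduce everything on overlaps to a classical scheme-theoretic situation.

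The crucial preliminary observation is that by adaptedness, every ``special'' feature of $\Y$ or $\varphi$---a node of $Y$, an orbinode of $\Y$, a point of the branch divisor $D$, or a point of the sections $\tau,\tau_1,\chi$---is contained in exactly one member $U_i$ of $\UU$. Consequently, on any double or triple overlap $U_{ij}:=U_i\cap U_j$, the orbicurve $\Y$ is actually a smooth affine scheme and $\varphi^{-1}(U_{ij})\to U_{ij}$ is an \'{e}tale double cover of smooth affine schemes.

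I would first glue the base. On $U_{ij}$, the restrictions $\Y_i'|_{U_{ij}}$ and $\Y_j'|_{U_{ij}}$ are deformations over $A'$ of one and the same smooth affine scheme; such deformations are trivial, so I may choose isomorphisms $\psi_{ij}\co \Y_i'|_{U_{ij}}\to \Y_j'|_{U_{ij}}$ reducing to the identity modulo the maximal ideal. The collection $\{\psi_{ij}\}$ is a \v{C}ech $1$-cochain for the tangent sheaf of $\Y$ tensored with $I$, and its failure to be a $1$-cocycle on triple overlaps defines a class in $\HH^2(\Y,T_{\Y}\otimes I)$. This group vanishes because $\Y$ has $1$-dimensional coarse moduli space and the pushforward to that coarse moduli space is exact in characteristic zero, so coherent cohomology vanishes in degree $\geq 2$. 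After modifying $\{\psi_{ij}\}$ by a $1$-coboundary I may therefore assume it is a cocycle, which assembles the $\Y_i'$ into a global deformation $\Y'$ of $\Y$ over $A'$.

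I would next glue the cover. With the identifications $\psi_{ij}$ fixed, the two restrictions $\X_i'|_{U_{ij}}$ and $\X_j'|_{U_{ij}}$ are \'{e}tale double covers of the same scheme $\Y'|_{U_{ij}}$ both reducing modulo the maximal ideal to $\varphi^{-1}(U_{ij})\to U_{ij}$. Since \'{e}tale morphisms lift uniquely up to unique isomorphism, there is a canonical isomorphism $\Phi_{ij}\co \X_i'|_{U_{ij}}\to \X_j'|_{U_{ij}}$ covering $\psi_{ij}$. Uniqueness of the \'{e}tale lift, applied on triple overlaps, forces $\Phi_{jk}\circ\Phi_{ij}=\Phi_{ik}$ automatically, so the $\X_i'$ glue to a global $\X'$ carrying a finite flat degree-$2$ morphism $\varphi'\co \X'\to \Y'$. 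Each of the sections $\tau,\tau_1,\chi$ lies in a single $U_i$, and no overlap meets it, so the local sections produced by the individual $\varphi_i'$ patch into global sections over $A'$ with no further compatibility to verify. Finally, each condition of Definition \ref{D:stack-H1}---flatness, the local quotient model at the odd section at infinity, the local model at odd nodes, and the identification of $D'$ as the branch locus---is local on $\Y'$ and is satisfied by each $\varphi_i'$ by hypothesis, hence by $\varphi'$.

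The main obstacle I anticipate is not a computation but careful bookkeeping with the $\mu_2$-orbifold structure during the gluing; the decisive simplification is that adaptedness confines all orbifold structure, along with all branching, nodes, and marked sections, to single opens $U_i$, reducing the gluing problem on every overlap to an entirely classical scheme-theoretic one in dimension one where $\HH^{\geq 2}$ vanishes and \'{e}tale lifts are unique.
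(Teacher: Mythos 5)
Your proposal is correct and follows essentially the same route as the paper: on overlaps adaptedness reduces everything to smooth affine (hence deformation-trivial) pieces, and the \v{C}ech obstruction to gluing lies in an $\HH^2$ of a coherent sheaf on a proper one-dimensional object, which vanishes. The only difference is that you make explicit the step of gluing the cover over the glued base via unique infinitesimal lifting of \'{e}tale morphisms, which the paper leaves implicit by phrasing the obstruction directly for the whole datum in $\HH^2(Y,T^1\otimes I)$.
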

\begin{proof}
By the assumption, the intersections $U_i\cap U_j$ are smooth affine 
schemes and so have only trivial deformations. It follows that there is an
isomorphism $g_{ij}\in \HH^0(U_{ij}, T^1\otimes I)$ between the restrictions to $U_{ij}$
 of deformations over $U_i$ and $U_j$; here, $T^1=\SHom(\Omega^{1}_Y,\O_Y)$. The vanishing of the associated $2$-cocycle  
 in $\HH^2(Y,T^1 \otimes I)$
 is a necessary and sufficient condition for the existence of the requisite 
 global extension of
 $(\varphi\co \X \ra \Y; \chi, D)$  to $A'$. But $\HH^2(Y,T^1\otimes I)=0$ because 
$Y$ is proper of dimension one.
\end{proof}

\begin{prop}\label{P:versality} Let $\varphi\co \X\ra\Y$ be a quasi-admissible
cover over $\spec \mathbb{K}$.
 For any adapted cover $\UU=\{U_i\}_{i\in S}$ of $\Y$
the map 
\[
\Def(\varphi) \ra \prod_{i\in S} \Def(\varphi\vert_{U_i})
\]
is formally smooth.
\end{prop}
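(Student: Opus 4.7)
The plan is to deduce Proposition \ref{P:versality} almost directly from the gluing statement in Lemma \ref{L:versality}. Recall that formal smoothness of the natural transformation $\Def(\varphi) \to \prod_{i\in S}\Def(\varphi|_{U_i})$ amounts to the following lifting property: for every small extension $0 \to I \to A' \to A \to 0$ of local Artinian $\KK$\nb-algebras, every deformation $\xi$ of $\varphi$ over $A$, and every compatible choice of local deformations $\xi'_i$ of $\varphi|_{U_i}$ over $A'$ extending $\xi|_{U_i}$, one must produce a global deformation $\xi'$ over $A'$ whose restriction over $A$ is isomorphic to $\xi$ and whose restriction to each $U_i$ is isomorphic to $\xi'_i$.

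First I would apply Lemma \ref{L:versality} directly to the collection $\{\xi'_i\}_{i\in S}$ to produce a global deformation $\xi'$ of $(\varphi\co \X \to \Y; \chi)$ over $A'$; this is exactly the content of the lemma, whose proof exploits the vanishing $\HH^2(Y, T^1 \otimes I) = 0$ for the one-dimensional proper $Y$. Next, since on the overlaps $U_{ij}$ the cover $\varphi|_{U_{ij}}$ is \'{e}tale over a smooth base (the adapted cover condition forces every branch point and every singular point of $\Y$ to lie in a unique $U_i$), the gluing cocycles on $U_{ij}$ take values in the kernel $I$\nb-module; reducing modulo $I$ therefore recovers the canonical gluing of $\xi$ from the $\xi|_{U_i}$. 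Hence $\xi'$ automatically restricts over $A$ to $\xi$, and by construction $\xi'|_{U_i} \cong \xi'_i$ for each $i$.

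The only remaining bookkeeping is to check that the additional pointed data do not spoil the lifting. As observed in the discussion preceding Lemma \ref{L:versality}, each marked section lies in a smooth locus (the point $\chi$ in $\X^{\smooth}$, the even section $\tau_1$ in $\X^{\smooth}$, and the odd section $\tau$ in $\Y^{\smooth}\setminus D$), so deformations of these closed immersions are unobstructed and lift uniquely on any affine patch of an adapted cover; moreover $\tau_2$ is determined by $\tau_1$ and $D$ by $\varphi$. Consequently the proposition reduces to the assertion about $(\varphi\co \X\to\Y;\chi)$ already supplied by Lemma \ref{L:versality}.

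I do not anticipate any serious technical obstacle: the argument is essentially a translation from Schlessinger's lifting criterion for formal smoothness into the gluing statement of Lemma \ref{L:versality}, the essential cohomological input $\HH^2(Y, T^1 \otimes I) = 0$ having already been isolated there. The only subtlety worth recording is the verification that the gluing output really restricts both to $\xi$ over $A$ and to $\xi'_i$ over $U_i$, and this is immediate from the fact that the Čech $1$\nb-cochain used to assemble $\xi'$ is built entirely out of the $\xi'_i$ and takes values in $I$.
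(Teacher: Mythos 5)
Your proposal is correct and follows the paper's own route: the paper's proof of Proposition \ref{P:versality} consists of the single line ``This is Lemma \ref{L:versality},'' and your argument is simply the explicit translation of that lemma into Schlessinger's lifting criterion, with the (correct) extra bookkeeping that the gluing data on the smooth affine overlaps $U_{ij}$ can be normalized modulo $I$ and that the marked sections deform unobstructedly. Nothing in your write-up departs from, or adds a genuinely new ingredient to, what the paper intends.
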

\begin{proof}
This is Lemma \ref{L:versality}.
\end{proof}
\begin{proof}[Proof of Theorem \ref{T:H-smooth-stack}:]
Let $(\varphi\co \X \ra \Y; \tau, \chi, D)$ be a pointed quasi-admissible cover over 
$\mathbb{K}$. By Proposition \ref{P:versality}, we need only to check that for every 
affine chart $U$ in an adapted open cover $\UU=\{U_i\}_{i\in S}$ of $\Y$, the 
deformation space $\Def(\varphi\vert_{}{U_i})$ is formally smooth.
The analysis breaks down into several cases.\par
\noindent
{\em Case {\rm I:} $U$ contains an (orbi)node.} 
The module of K\"{a}hler differentials of $B=A[x,y]/(x,y)$ has a two-term free 
resolution 
$ 0 \ra B \stackrel{\binom{y}{x}}{\ra} B\oplus B\ra \Omega^1_{B/A}\ra 0$. 
As observed in \cite{AV}, the above resolution 
is $\mu_2$-equivariant for the action $(x,y) \ra (-x,-y)$. It follows that locally at 
the (orbi)node
the projective dimension of $\Omega^1_{U}$ is $2$. In particular, 
$\Ext^2(\Omega^1_U,\O_U)=0$ and $\Def(U)$ is smooth.
It remains to observe that since $\varphi$ is finite \'{e}tale
over $U$, the map $\Def(\varphi\vert_{U})\ra \Def(U)$ is formally smooth.
\par
\noindent
{\em Case {\rm II:} $U$ is smooth and contains $\varphi(\chi)$.}
Let $U=\spec A[x]$ and $\varphi^{-1}(U)=\spec A[x]/(y^2-f(x))$. 
Without loss of generality, 
the section $\chi$ is $(x=0, y=a_0)$, where $a_{0}^{2}=f(0)$. 
For an arbitrary $\mathbb{K}$-extension $A' \ra A\ra 0$, we would like to find a lifting of $f(x)$ 
and $a_0$ to $A'[x]$ and $A'$, respectively. Write $f(x)=xg(x)+a_0^2$. Take $g'(x)$ to be an arbitrary lifting of $g(x)$
and $a_0'$ be an arbitrary lifting of $a_0$. Then $f'(x)=xg'(x)+(a'_0)^2 \in A'[x]$ is a lifting of $f(x)$ to $A'[x]$ and the section
$\chi': (x=0, y=a'_0)$ is a lifting of $\chi$.

\par  \noindent
{\em Case {\rm III:}  $U$ is smooth and does not contain $\varphi(\chi)$.}
This is the simplest case: the deformations of $\varphi\vert_{U}$ are the same
as deformations of $U$ together with the divisor $D\cap U$. The deformation space
is smooth because $U$ is a smooth one dimensional affine scheme.
\end{proof}

Propostion \ref{P:A=D} of Section \ref{S:D-singularities} establishes the equivalence  between deformations of an $A_{k-1}$ singularity with a 
section and a $D_k$ singularity. Using it, we rephrase Proposition 
\ref{P:versality} as follows. 
\begin{prop}\label{P:versality2} Let
 $(\varphi\co \X \ra \Y; \tau, \chi, D)$ be a pointed quasi-admissible cover. Suppose that $D=k_0p_0+\sum_{i=1}^r (k_i+1)p_i$, where $p_i$
are distinct points of $\Y$. Further assume that $\varphi(\chi)=p_0$. Then the natural morphism
\[
\Def(\varphi) \ra \Def(D_{k_0})\times \Def(A_{k_1})\times\cdots \times \Def(A_{k_r})
\]
is formally smooth.
\end{prop}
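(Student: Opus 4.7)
The plan is to apply Proposition \ref{P:versality} to a carefully chosen adapted cover so that each local deformation functor matches exactly one factor of the target. First, I would pick an adapted affine cover $\UU=\{U_j\}_{j\in S}$ of $\Y$ with the property that $U_0$ is the unique chart containing $p_0=\varphi(\chi)$, for each $1\leq i\leq r$ the chart $U_i$ is the unique chart containing $p_i$, and the remaining charts contain no points of $D\cup\varphi(\chi(T))\cup\Sing(\Y)$. By Proposition \ref{P:versality}, the restriction map
\[
\Def(\varphi)\to \prod_{j\in S} \Def(\varphi|_{U_j})
\]
is formally smooth. On every chart that contains none of the distinguished points, $\varphi|_{U_j}$ is an \'{e}tale double cover of a smooth affine curve, so $\Def(\varphi|_{U_j})$ is trivial and such factors may be discarded from the target.

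Next, I would identify the remaining factors $\Def(\varphi|_{U_i})$ with the asserted singularity deformation spaces. For $i\geq 1$, the chart $U_i$ is a smooth affine curve and, \'{e}tale locally, $\varphi|_{U_i}$ is the double cover $y^2=f_i(x)$ with $f_i$ vanishing to order $k_i+1$ at $p_i$, producing an $A_{k_i}$ singularity on $\X$. A deformation of $\varphi|_{U_i}$ is simply a deformation of the polynomial $f_i$; after the standard change of coordinate on $x$ that kills the $x^{k_i}$ term and absorbs the higher-order terms into a unit, these deformations are parametrized by
\[
y^2=x^{k_i+1}+a_{k_i-1}x^{k_i-1}+\cdots+a_0,
\]
so the natural morphism $\Def(\varphi|_{U_i})\to\Def(A_{k_i})$ is formally smooth. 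For $i=0$, the chart $U_0$ contains $p_0$ with $\chi$ mapping to it, and $D$ has multiplicity $k_0$ at $p_0$, so $\varphi|_{U_0}$ is an $A_{k_0-1}$ singularity equipped with a section passing through the singular point. Proposition \ref{P:A=D} identifies the deformation functor of this pair with $\Def(D_{k_0})$, yielding a formally smooth map $\Def(\varphi|_{U_0})\to\Def(D_{k_0})$.

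Composing the formally smooth map from Proposition \ref{P:versality} with these formally smooth local identifications produces the desired formally smooth map
\[
\Def(\varphi)\to \Def(D_{k_0})\times\prod_{i=1}^{r}\Def(A_{k_i}).
\]
The only non-formal ingredient, beyond Proposition \ref{P:versality} and the triviality of deformations on smooth affine pieces, is the equivalence between deformations of $(A_{k_0-1},\chi)$ and $\Def(D_{k_0})$. This is the heart of the matter, and it is precisely the content of Proposition \ref{P:A=D}; once that proposition is available, the present statement becomes a local-to-global assembly. Thus the principal obstacle to overcome is localized in the $p_0$ chart, and it is handled in the separate Section \ref{S:D-singularities}.
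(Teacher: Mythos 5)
Your proposal is correct and is essentially the paper's intended argument: the paper offers no separate proof, stating only that Proposition \ref{P:versality2} is Proposition \ref{P:versality} ``rephrased'' via Proposition \ref{P:A=D}, which is exactly the adapted-cover-plus-local-identification assembly you carry out. One small imprecision: if $\Y$ is nodal, some chart of your adapted cover must contain the (orbi)nodes, contradicting your stipulation that the remaining charts avoid $\Sing(\Y)$ --- but this is harmless, since those factors are themselves formally smooth (Case I of the proof of Theorem \ref{T:H-smooth-stack}) and may be discarded from the target just like the \'{e}tale ones.
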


\section{The log minimal model program for $\Tl{A_n}$ and $\Tl{D_n}$}
\label{S:logMMP}

In this section, we prove Parts 5 of Main Theorems \ref{main1} and \ref{main2}.
The key part of the argument is the 
positivity of certain log
canonical divisors on $\H_{n,\alpha}$ and $\H_{n,\alpha,\beta}$, which we establish
using \cite{FedAmple}. Throughout, we use standard
notation for the divisor classes on moduli spaces of pointed curves.

We begin by introducing
natural Cartier divisor classes on $\H_{n,\alpha}$ and $\H_{n,\alpha,\beta}$,
which we define in terms of divisor classes on moduli spaces of weighted
pointed curves. 
Given a weight vector $\A=\bigl(1,\underbrace{\alpha,\dots,\alpha}_{n+1})$ 
or $\A=\bigl(1,\beta, \underbrace{\alpha,\dots,\alpha}_{n})$, 
consider the moduli space $\Mg{0,\A}$ of $\A$\nb-stable rational curves \cite{Hweights}. 
We recall that irreducible boundary divisors on $\Mg{0,\A}$ are parameterized by partitions $I\cup J=\A$. We
arrange $I$ to always contain weight $1$. 
Set
\begin{align*}
\Delta_{\odd} &:=\sum_{\vert J\vert \text{ is odd and $\sum_{i\in J} \alpha_i>1$}}\Delta_{I,J} \, , 
& \Delta_s &:=\sum_{\vert J \vert =2, \ \sum_{i\in J} \alpha_i\leq 1}\Delta_{I,J} \, , \\
\Delta_{\ev} &:=\sum_{\vert J\vert \text{ is even and $\sum_{i\in J} \alpha_i>1$}} \Delta_{I,J} \, , 
& \Delta&:= \Delta_{\odd}+\Delta_{\ev} \, .
\end{align*}
When $\A=\bigl(1,\beta,\alpha,\dots,\alpha)$, 
we also denote by $\Delta_{\sigma\cap \chi}$
the divisor of curves 
$(C; \tau, \chi, \{\sigma_{i}\}_{i=1}^{n})$ in $\Mg{0,\A}$ such that $\sigma_{i}=\chi$ for some $i$.
In other words, $\Delta_{\sigma\cap \chi}$ parameterizes curves where the point of weight $\beta$ coincides with one of the points of weight $\alpha$. 
When $\alpha+\beta>1$, we take $\Delta_{\sigma\cap\chi}=0.$ 

Let $\q\co \Mg{0,\A} \ra \Mg{0,\A}/\S$ be the quotient map, where 
$\S=\S_{n+1}$ if $\A=(1,\underbrace{\alpha,\dots,\alpha}_{n+1})$, and
$\S=\S_n$ if $\A=(1,\beta, \underbrace{\alpha,\dots,\alpha}_{n})$. For 
$\W=(1,\alpha^{n+1})$ or $\W=(1,\beta,\alpha^{n})$, we abuse notation and
denote $\Mg{0,\A}\ra \R_{\W}$ also by $\q$ (cf. Section \ref{S:different-marked-curves}). This abuse
is inconsequential to the study of divisors, since 
the coarse moduli map 
$\Mdiv{\W} \ra \Mg{0,\A}/\S$ is an isomorphism in codimension one. 
Given $D\subset \Mg{0,\A}$, we let $\q(D)$ be the reduced image. 
We also conflate the branch morphism $\brr$ and its composition 
with $\Mdiv{\W} \ra \Mg{0,\A}/\S$.

\subsection{Boundary divisors on $\H_{n,\alpha}$ and $\H_{n,\alpha,\beta}$} 
We let $\delta_{\irr}$ to be the divisor (on $\H_{n,\alpha}$ or $\H_{n,\alpha,\beta}$)
parameterizing covers $\X \stackrel{\varphi}{\ra} \Y$ such 
that $\X$ has a singularity lying over a smooth point of $\Y$. 
Equivalently, a cover is in $\delta_{\irr}$ if the branch divisor has a point
of multiplicity at least $2$. It follows that 
\begin{align*}
\delta_{\irr}=\brr^*\q(\Delta_s).
\end{align*}

We let $\delta_{\red}$ to be the divisor parameterizing reducible covers 
$\X \stackrel{\varphi}{\ra} \Y$. 
A cover is reducible if and only if $\Y$ has an (orbi)node. 
In terms of divisors on $\Mg{0,\A}/\S$ 
we have that 
\begin{align*}
\delta_{\red}=\brr^{*}\q(\Delta_\ev)+\frac{1}{2}\brr^{*}\q(\Delta_{\odd}).
\end{align*}

On $\H_{n,\alpha,\beta}$ we define the {\em Weierstrass
divisor} $\delta_{W}$ to be the locus of
$(\varphi\co \X\ra\Y; \tau,\chi, D)$ such that $\chi$ is a ramification point 
of $\varphi$. Equivalently,
\begin{align*}
\delta_{W}=\br^*\q(\Delta_{\sigma\cap \chi}).
\end{align*}

\subsubsection{Log canonical divisors on $\H_{n,\alpha}$}
\label{S:canonical-1}

Let $\W=(1,\alpha^{n+1})$. 
Recall that $\Mdiv{\W}$ is the moduli space of $\W$\nb-stable rational curves with a section at infinity 
of weight $1$ and a divisor of degree $n+1$ and weight $\alpha$ (see Section 
\ref{S:moduli-spaces}).
Consider the commutative diagram
\begin{align}\label{D:main-diagram}
\xymatrix{
& \H_{n,\alpha} \ar[d]^{\brr} \ar[drr]^{\brr} \ar[rr]^(0.4){\text{coarse}} & & H_{n,\alpha} \ar[d]^{\simeq} \\
\Mg{0,\A} \ar[r]^{\q} 
& \Mdiv{\W} \ar[rr]^(0.4){\text{coarse}} & & \Mg{0,\A}/\S_{n+1}=R_{\W}.
}
\end{align}


\begin{lemma}\label{L:ramification} 
\hfill

\begin{enumerate}
\item[(a)]The morphism $\q\co \Mg{0,\A} \ra \Mg{0,\A}/\S_{n+1}$  is simply ramified 
over $\Delta_s$.
\item[(b)]The  morphism $\brr\co \H_{n,\alpha}\ra \Mg{0,\A}/\S_{n+1}$ is simply ramified over $\q(\Delta_{\odd})$.
\end{enumerate}
\end{lemma}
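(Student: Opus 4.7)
The plan is as follows. For part (a), the morphism $\q$ is the quotient of a smooth proper scheme by a finite group action in characteristic zero, so its ramification divisor coincides with the closure of the locus of nontrivial generic stabilizers. I would enumerate the generic stabilizer along each irreducible boundary divisor $\Delta_{I,J}$. When $|J|=2$ and $\sum_{i\in J}\alpha_i\leq 1$, that is, $\Delta_{I,J}\subset \Delta_s$, the generic point parameterizes curves where the two $J$-labeled points coincide on a single component; here the stabilizer is the transposition of these two labels, so $\q$ is simply ramified. For every other boundary divisor (with $|J|\geq 3$ or with $|J|=2$ and $\sum_{i\in J}\alpha_i>1$), the $J$-labeled points sit on a separate rational bubble attached by a node; since that bubble carries at least three distinguished points, its automorphism group is trivial generically, so the $\S_{n+1}$-stabilizer is trivial and $\q$ is \'etale along the divisor.

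For part (b), I would factor the branch morphism through the natural chain
\[
\H_{n,\alpha} \ra \Mdiv{\W}^{\ev} \ra \Mdiv{\W} \ra R_{\W},
\]
with $R_{\W}=\Mg{0,\A}/\S_{n+1}$, and control the ramification of each arrow separately. The first arrow is unramified: given an even rational orbicurve, Lemma \ref{L:splitting} shows that the quasi-admissible cover is determined up to a finite \'etale choice (a $\mu_2$-twist in the even-$n$ case), so $\H_{n,\alpha}\ra\Mdiv{\W}^{\ev}$ is \'etale. The middle arrow is the root-stack morphism $\Mdiv{\W}\sqrt[2]{\delta_{\odd}}\ra \Mdiv{\W}$ of Theorem \ref{T:even-orbicurves}, which by construction is simply ramified along $\delta_{\odd}$ and \'etale elsewhere. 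The third arrow, the coarse moduli map, is \'etale in codimension one: the generic point of $\delta_{\odd}$ parameterizes a two-component divisorially marked curve whose marked points sit in general position, hence with no nontrivial automorphisms. Composing, $\brr$ is simply ramified precisely over the image $\q(\Delta_{\odd})$ of $\delta_{\odd}$ in $R_{\W}$.

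The main obstacle I anticipate is keeping the bookkeeping straight: for part (a), one must verify that no ramification hides inside $\Delta_{\odd}\cup\Delta_{\ev}$ beyond $\Delta_s$, and for part (b), that the coarse moduli map $\Mdiv{\W}\ra R_{\W}$ contributes no additional ramification along $\delta_{\odd}$. Both reductions rest on the same principle, namely that a $\PP^1$ with at least three distinguished marked points has no generic automorphisms. This is exactly the principle that forces $\W$\nb-stability to separate higher-order collisions of branch points onto distinct components, so the geometry of stable limits lines up cleanly with the ramification analysis.
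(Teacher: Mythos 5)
Your part (a) is essentially the paper's argument: the paper simply asserts that the quotient map ramifies where the stabilizer is nontrivial and that a local computation gives simple ramification, and your enumeration of stabilizers along the boundary divisors $\Delta_{I,J}$ fills in exactly that computation. Your part (b), however, takes a genuinely different route. The paper argues directly on $\H_{n,\alpha}$: the branch morphism to $R_{\W}$ is (essentially) a coarse moduli map, so it ramifies precisely along the codimension-one locus of covers with an extra automorphism, and these are identified as the reducible covers possessing an odd node, which land in $\q(\Delta_{\odd})$. You instead factor $\brr$ through the chain $\H_{n,\alpha}\ra\Mdiv{\W}^{\ev}\ra\Mdiv{\W}\ra R_{\W}$ and invoke Theorem \ref{T:even-orbicurves} to recognize the middle arrow as the degree-$2$ root stack along $\delta_{\odd}$, which is simply ramified there by construction (locally $x^2=f$), while the outer arrows are unramified in codimension one. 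This buys you two things: the ``simple'' part of the claim is manifest from the local model of the root stack rather than requiring a separate check of the ramification index, and the identification of the ramification locus is inherited from an already-established structural result instead of a fresh analysis of automorphism groups of covers. The paper's version is shorter and more self-contained at this point of the text; yours is more modular.

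One caveat, which applies equally to the paper's proof and to yours: for small $n$ (concretely $n=2,3$) there are codimension-one loci in the \emph{interior} of $\Mg{0,\A}$ consisting of configurations admitting a nontrivial affine symmetry (e.g.\ $\{t,-t,s,-s\}$), along which $\q$ also ramifies. Your enumeration runs only over boundary divisors and over $\Delta_s$, so it does not detect these; neither does the paper's. This does not contradict the lemma as stated (which asserts only that the ramification \emph{over $\Delta_s$, resp.\ over $\q(\Delta_{\odd})$,} is simple), and in the downstream computation of Proposition \ref{P:canonical-class-1} the extra ramification of $\q$ along such loci is cancelled by the matching extra stabilizer of $\H_{n,\alpha}$ there, but if you intend your sentence ``no ramification hides beyond $\Delta_s$'' as a claim about the full ramification divisor, you would need either to restrict to $n\geq 4$ or to record this cancellation explicitly.
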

\begin{proof} We note that (a) is standard: the quotient morphism is always ramified over the locus where the stabilizer is nongeneric (in our case, non-trivial).
A local computation shows that the ramification is simple.

To prove (b), observe that the morphism $\brr$ is ramified along the codimension one locus of hyperelliptic covers having an extra automorphism. These are precisely reducible covers possessing an odd node. By our convention, such covers map to the divisor $\q(\Delta_{\odd})$. 
\end{proof}

\begin{prop}\label{P:canonical-class-1}
The canonical divisor of $\H_{n,\alpha}$ is $K_{\H_{n,\alpha}}=\brr^{*}(D)$, where
$D$ is the divisor on $R_\W=\Mg{0,\A}/\S_{n+1}$ satisfying
\begin{align*}
\q^{*}(D)=\psi-\Delta_s-2\Delta_\ev-\frac{3}{2}\Delta_{\odd}
\end{align*}
\end{prop}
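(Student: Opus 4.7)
The plan is to compute $K_{\H_{n,\alpha}}$ by chaining two Riemann--Hurwitz-type comparisons: one for the quotient map $\q\co \Mg{0,\A}\to R_\W$ and one for the branch morphism $\brr\co \H_{n,\alpha}\to R_\W$. The proposition will then be verified by producing the candidate class
\[
D := K_{R_\W} + \tfrac{1}{2}\,\q(\Delta_\odd)
\]
and checking both $K_{\H_{n,\alpha}}=\brr^* D$ and the pullback identity $\q^* D = \psi-\Delta_s-2\Delta_\ev-\tfrac{3}{2}\Delta_\odd$.

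The starting point is the canonical class of the weighted pointed moduli space, namely $K_{\Mg{0,\A}} = \psi - 2\Delta_\ev - 2\Delta_\odd$, with $\psi=\sum_i \psi_i$ and no $\Delta_s$ contribution. This is the $g=0$ specialization of Mumford's formula $K_{\Mg{g,n}}=13\lambda+\psi-2\delta$ transported to $\Mg{0,\A}$ via Hassett's reduction morphism $\rho\co \Mg{0,n+2}\to\Mg{0,\A}$: the discrepancies along the contracted bubble divisors $\Delta_{I,J}$ with $3\le|J|\le k+1$ cancel the corrections to the $\psi$-classes, while the $|J|=2$ divisor is not exceptional (it maps isomorphically to $\Delta_s$), so $\Delta_s$ does not appear.

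Next, since $\q$ is Galois with simple ramification in codimension one exactly along $\Delta_s$ by Lemma \ref{L:ramification}(a), Riemann--Hurwitz gives
\[
\q^*K_{R_\W} \;=\; K_{\Mg{0,\A}} - \Delta_s \;=\; \psi - \Delta_s - 2\Delta_\ev - 2\Delta_\odd.
\]
The analogous computation for $\brr$, using Lemma \ref{L:ramification}(b) that $\brr$ is simply ramified over $\q(\Delta_\odd)$, yields
\[
K_{\H_{n,\alpha}} \;=\; \brr^* K_{R_\W} + \tfrac{1}{2}\,\brr^*\q(\Delta_\odd) \;=\; \brr^* D.
\]
Pulling $D$ back via $\q$, and using that $\q$ is \'etale over the generic points of $\Delta_\odd$ (so $\q^*\q(\Delta_\odd)=\Delta_\odd$), gives the desired $\q^* D = \psi - \Delta_s - 2\Delta_\ev - \tfrac{3}{2}\Delta_\odd$.

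The main obstacle is making the Riemann--Hurwitz step for $\brr$ rigorous. Unlike $\q$, the morphism $\brr$ is not a classical finite cover: every quasi-admissible cover carries a hyperelliptic involution, so $\brr$ is generically a $\mu_2$-gerbe onto its rigidification, and the ``simple ramification'' of Lemma \ref{L:ramification}(b) records an \emph{extra} $\mu_2$-stabilizer along $\brr^{-1}(\q(\Delta_\odd))$. One must therefore interpret the coefficient $\tfrac{1}{2}$ through the precise relation between the canonical class of a Deligne--Mumford stack and that of its coarse moduli space, verifying that the generic hyperelliptic $\mu_2$ contributes nothing to the $\brr^*$-comparison while the extra $\mu_2$ at odd nodes produces exactly $\tfrac{1}{2}\brr^*\q(\Delta_\odd)$.
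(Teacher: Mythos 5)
Your proposal is correct and follows essentially the same route as the paper: the paper likewise takes $D=K_{R_\W}+\tfrac{1}{2}\q(\Delta_\odd)$, applies Lemma \ref{L:ramification}(b) to get $K_{\H_{n,\alpha}}=\brr^*(D)$, applies Lemma \ref{L:ramification}(a) to compute $\q^*(D)=K_{\Mg{0,\A}}-\Delta_s+\tfrac{1}{2}\Delta_\odd$, and then substitutes $K_{\Mg{0,\A}}=\psi-2\Delta_\ev-2\Delta_\odd$ (which it simply cites from Grothendieck--Riemann--Roch and \cite[Section 3.1.1]{Hweights} rather than rederiving via the reduction morphism as you sketch). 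Your closing remarks on the generic hyperelliptic $\mu_2$ (present only for $n$ even) contributing nothing and the root-stack structure along $\q(\Delta_\odd)$ accounting for the coefficient $\tfrac{1}{2}$ are a more careful unpacking of a step the paper leaves implicit, but they do not change the argument.
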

\begin{proof}
By Lemma \ref{L:ramification}(b), we have 
$
K_{\H_{n,\alpha}}=\brr^*\left(K_{R_\W}+\frac{1}{2}\q(\Delta_\odd)\right)
$.
By Lemma \ref{L:ramification}(a), we have
$\q^*\left(K_{R_\W}+\frac{1}{2}\q(\Delta_\odd)\right)=K_{\Mg{0,\A}}-\Delta_s+\frac{1}{2}\Delta_\odd$.
It remains to recall that by the Grothendieck-Riemann-Roch formula 
(see also \cite[Section 3.1.1]{Hweights}):
$$K_{\Mg{0,\A}}=\psi_\sigma+\psi_\tau-2\Delta=\psi-2\Delta_{\ev}-2\Delta_{\odd}.$$
\end{proof}

\subsubsection{Log canonical divisors on $\H_{n,\alpha,\beta}$}
\label{S:canonical-2}
The canonical divisor of $\H_{n,\alpha,\beta}$ is computed analogously. 
Let $\W=(1, \beta, \alpha^{n})$ and $\A=(1, \beta, \underbrace{\alpha,\dots,\alpha}_{n})$, and 
consider the diagram
\begin{align}\label{D:main-diagram-2}
\xymatrix{
& \H_{n,\alpha,\beta} \ar[d]^{\brr} \ar[drr]^{\brr} \ar[rr]^(0.4){\text{coarse}} & & H_{n,\alpha,\beta} \ar[d]^{\simeq} \\
\Mg{0,\A} \ar[r]^{\q} 
& \Mdiv{\W} \ar[rr]^(0.4){\text{coarse}} & & \Mg{0,\A}/\S_{n}=R_{\W}.
}
\end{align}The minor difference is 
that the morphism
$\brr\co \H_{n,\alpha,\beta} \ra R_{\W}$ is now ramified 
over $\q(\Delta_\odd)$ 
and $\q(\Delta_{\sigma\cap\chi})$.  Therefore, 
$ 
K_{\H_{n,\alpha,\beta}}=\brr^*\left(K_{R_{\W}}+\frac{1}{2}\q(\Delta_\odd+\Delta_{\sigma\cap \chi})\right).
$
The quotient morphism $\q\co \Mg{0,\A} \ra \Mg{0,\A}/\S_n$ is still ramified only at $\Delta_s$.
It follows that
\begin{multline*}
\q^*\left( K_{R_{\W}}+\frac{1}{2}\q(\Delta_\odd+\Delta_{\sigma\cap \chi})\right) =
K_{\Mg{0,\A}}-\Delta_s+\frac{1}{2}(\Delta_\odd+\Delta_{\sigma\cap \chi}) \\
=\psi_\tau+\psi_\sigma+\psi_\chi-\Delta_s-2\Delta_\ev-\frac{3}{2}\Delta_{\odd}+\frac{1}{2}\Delta_{\sigma\cap \chi}.
\end{multline*}
We summarize these computations in the following proposition.
\begin{prop}\label{P:canonical-class-2}
The canonical divisor of $\H_{n,\alpha,\beta}$ is $K_{\H_{n,\alpha,\beta}}=\brr^{*}(D)$, where
$D$ is the divisor on $R_\W=\Mg{0,\A}/\S_{n}$ satisfying
\begin{align*}
\q^{*}(D)=\psi-\Delta_s-2\Delta_\ev-\frac{3}{2}\Delta_{\odd}+\frac{1}{2}\Delta_{\sigma\cap\chi}.
\end{align*}
\end{prop}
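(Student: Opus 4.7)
The plan is to mimic the proof of Proposition \ref{P:canonical-class-1} step by step, adjusting only for the presence of the marked section $\chi$; in fact, the text immediately preceding the statement already performs the arithmetic, so the task is to justify each line.

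First, I would extend Lemma \ref{L:ramification} to the pointed setting. The morphism $\brr\co\H_{n,\alpha,\beta}\to R_{\W}$ is generically étale (it is a composition of the branch map to $\Mdiv{\W}^\ev$, which is unramified away from automorphisms of the cover, with the coarse moduli map $\Mdiv{\W}\to R_{\W}$, which is an isomorphism in codimension one). Its codimension-one ramification comes from pointed covers with extra automorphisms. There are exactly two such loci: (a) $\q(\Delta_\odd)$, arising from odd nodes exactly as in Lemma \ref{L:ramification}(b), and (b) $\q(\Delta_{\sigma\cap\chi})$, which is new: when $\chi$ lies over a branch point of $\varphi$, the hyperelliptic involution fixes $\chi$ and becomes a nontrivial automorphism of the pointed cover, whereas generically the involution exchanges $\chi$ with another preimage and is not an automorphism of the pointed datum. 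In both cases the extra stabilizer is $\mu_2$, so the ramification index is $2$ and the contribution is simple.

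Second, I would combine two Riemann--Hurwitz statements. Applying it to $\brr$ gives
\[
K_{\H_{n,\alpha,\beta}}=\brr^{*}\!\left(K_{R_{\W}}+\tfrac{1}{2}\q(\Delta_\odd+\Delta_{\sigma\cap\chi})\right).
\]
Applying it to $\q\co\Mg{0,\A}\to R_{\W}=\Mg{0,\A}/\S_{n}$, which by the same argument as Lemma \ref{L:ramification}(a) is simply ramified exactly along $\Delta_s$ (the locus where two $\alpha$-weighted sections coincide, i.e.\ where $\S_n$ has nontrivial stabilizer), yields
\[
\q^{*}\!\left(K_{R_{\W}}+\tfrac{1}{2}\q(\Delta_\odd+\Delta_{\sigma\cap\chi})\right)=K_{\Mg{0,\A}}-\Delta_s+\tfrac12\Delta_\odd+\tfrac12\Delta_{\sigma\cap\chi}.
\]

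Third, I would invoke the Hassett formula from \cite[Section 3.1.1]{Hweights} (already used in the proof of Proposition \ref{P:canonical-class-1}), namely $K_{\Mg{0,\A}}=\psi_\tau+\psi_\sigma+\psi_\chi-2\Delta_\ev-2\Delta_\odd=\psi-2\Delta_\ev-2\Delta_\odd$. Substituting and collecting the $\Delta_\odd$ terms produces $\psi-\Delta_s-2\Delta_\ev-\tfrac{3}{2}\Delta_\odd+\tfrac{1}{2}\Delta_{\sigma\cap\chi}$, and taking $D$ to be the divisor on $R_{\W}$ with this $\q$-pullback gives the stated identity.

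The only genuinely new input compared to Proposition \ref{P:canonical-class-1} is the identification of $\Delta_{\sigma\cap\chi}$ as a simple ramification divisor of $\brr$ coming from the hyperelliptic involution fixing $\chi$; this is the step I expect to require the most care, since one must rule out that a coalescence of $\chi$ with a branch point forces further automorphisms beyond the $\mu_2$ generated by $\iota$, and in particular verify that such collisions do not occur simultaneously with an odd node (so that the two ramification loci meet only in codimension $\geq 2$ and may be added without double-counting). Once this compatibility is confirmed, the rest of the argument is the same formal calculation as in Section \ref{S:canonical-1}.
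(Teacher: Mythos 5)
Your proposal is correct and follows essentially the same route as the paper: the paper's argument in Section \ref{S:canonical-2} is exactly the two applications of Riemann--Hurwitz (to $\brr$, ramified over $\q(\Delta_\odd)$ and $\q(\Delta_{\sigma\cap\chi})$, and to $\q$, ramified over $\Delta_s$) followed by substitution of the Hassett formula for $K_{\Mg{0,\A}}$. Your added justification that $\Delta_{\sigma\cap\chi}$ is a simple ramification divisor of $\brr$ --- because the hyperelliptic involution fixes $\chi$ precisely when $\chi$ is a ramification point of $\varphi$, contributing an extra $\mu_2$ --- is a detail the paper leaves implicit, and is the right explanation.
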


\subsection{Log canonical models of $\Tl{A_n}$}

Fix an integer $n\geq 2$. In Section \ref{S:reduction}, we show that for $k'\geq k$ there exists a natural reduction morphism 
$\H_{n}[k] \ra \H_{n}[k']$. 
If $k'$ is even (resp. odd), the morphism $\H_{n}[k]\ra\H_{n}[k']$ replaces a tail of 
genus $k'/2$ 
(resp. a bridge of genus $(k'-1)/2$) 
 by an $A_{k'}$ singularity. We obtain a sequence of birational contractions
\[
\H_{n}[1]\ra \H_{n}[2]\ra \dots \ra \H_{n}[n-1],
\]
where the initial space 
$\H_{n}[1]$ is 
the stack of Harris-Mumford {\em admissible covers} of genus 
$\left \lfloor \frac{n}{2}\right \rfloor$ 
(recall that admissible covers have at worst $A_1$ singularities), and, by 
Example \ref{E:weighted-projective-spaces},
the final space is 
\[
\H_{n}[n-1]\simeq \left[\Def(A_n)\smallsetminus \mathbf{0} \ / \ \GG_m\right]
\simeq\begin{cases} \P(2,3,\dots, n+1),\text{ if $n$ is odd}, \\ 
\P(4,6,\dots, 2n+2),\text{ if $n$ is even.}
\end{cases}
\]
(It is important to note that the generic stabilizer of $\H_n[n-1]$ is $\mu_2$ in the case of even $n$.) 

In what follows, we describe the intermediate
spaces as log canonical models of $\H_n[1]$. 
\begin{prop}\label{P:ampleness-H} 
The divisor $K_{\H_{n,\alpha}}+(\alpha+1/2)\delta_\irr+\delta_\red$
is a pullback of an ample divisor from the coarse moduli space. 
\end{prop}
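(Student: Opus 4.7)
My plan is to exhibit an ample $\QQ$-Cartier divisor $E$ on the coarse moduli space $H_{n,\alpha}\simeq R_{\W}$ (see Diagram \eqref{D:main-diagram}) whose pullback under $\brr$ equals $K_{\H_{n,\alpha}}+(\alpha+1/2)\delta_{\irr}+\delta_{\red}$, and then to deduce ampleness of $E$ by descent along the finite cover $\q\co\Mg{0,\A}\to R_{\W}$.

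First, combining Proposition \ref{P:canonical-class-1} with the formulas
$\delta_{\irr}=\brr^{*}\q(\Delta_{s})$ and $\delta_{\red}=\brr^{*}\q(\Delta_{\ev})+\tfrac{1}{2}\brr^{*}\q(\Delta_{\odd})$, I would set
\[
E:=D+(\alpha+1/2)\,\q(\Delta_{s})+\q(\Delta_{\ev})+\tfrac{1}{2}\q(\Delta_{\odd}),
\]
so that by construction $\brr^{*}E=K_{\H_{n,\alpha}}+(\alpha+1/2)\delta_{\irr}+\delta_{\red}$. The next step is to compute $\q^{*}E$ using Lemma \ref{L:ramification}(a): since $\q$ is simply ramified over $\q(\Delta_{s})$ and \'{e}tale elsewhere, we have $\q^{*}\q(\Delta_{s})=2\Delta_{s}$, $\q^{*}\q(\Delta_{\ev})=\Delta_{\ev}$, and $\q^{*}\q(\Delta_{\odd})=\Delta_{\odd}$. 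Substituting $\q^{*}D=\psi-\Delta_{s}-2\Delta_{\ev}-\tfrac{3}{2}\Delta_{\odd}$ from Proposition \ref{P:canonical-class-1}, the coefficients collapse to give
\[
\q^{*}E \;=\; \psi-\Delta+2\alpha\Delta_{s} \;=\; K_{\Mg{0,\A}}+\Delta+2\alpha\Delta_{s},
\]
where the last equality uses $K_{\Mg{0,\A}}=\psi-2\Delta$.

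Next, I would reduce ampleness on $R_{\W}$ to ampleness on $\Mg{0,\A}$: since $\q$ is a finite surjective morphism between normal projective varieties, $E$ is ample on $R_{\W}$ if and only if $\q^{*}E$ is ample on $\Mg{0,\A}$ (ampleness descends along finite surjective morphisms between projective schemes). Thus everything is reduced to proving ampleness of the symmetric log canonical $\QQ$-divisor $K_{\Mg{0,\A}}+\Delta+2\alpha\Delta_{s}$ on the Hassett space $\Mg{0,\A}$ with weight vector $\A=(1,\alpha^{n+1})$.

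The main obstacle is this last step, which I expect to dispatch by invoking the ampleness criterion of \cite{FedAmple}: the divisor has coefficient $1$ along every node-creating boundary component of $\Delta_{\ev}\cup\Delta_{\odd}$ and coefficient $2\alpha\in(0,1]$ along the coincidence divisor $\Delta_{s}$, and these numerical data fall within the range for which the cited criterion guarantees ampleness of $K+\mathrm{Boundary}$ on $\Mg{0,\A}$. The hypothesis $\alpha>0$ is essential here, since at $\alpha=0$ the divisor only pulls back to a semiample class contracting $\Delta_{s}$. Once this ampleness on $\Mg{0,\A}$ is established, the descent argument finishes the proof.
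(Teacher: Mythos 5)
Your proposal is correct and follows essentially the same route as the paper: pull back via $\q$ using Proposition \ref{P:canonical-class-1} and the ramification data of Lemma \ref{L:ramification}, arrive at $\psi+2\alpha\Delta_s-\Delta$ on $\Mg{0,\A}$, and invoke \cite[Theorem 4]{FedAmple}. The only difference is that you spell out the descent of ampleness along the finite quotient $\q$, which the paper leaves implicit.
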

\begin{proof}
By Proposition \ref{P:canonical-class-1}, we have $K_{\H_{n,\alpha}}+(\alpha+1/2)\delta_\irr+\delta_\red=\brr^*(D)$, where $D$ is the divisor
on $\Mg{0,\A}/\S_{n+1}$ satisfying 
\begin{align*}
\q^*(D) &=(\psi_\tau+\psi_\sigma-\Delta_s-2\Delta_\ev-\frac{3}{2}\Delta_\odd)+(2\alpha+1) \Delta_s+(\Delta_\ev+\frac{1}{2}\Delta_\odd) \\
&= \psi_\tau+\psi_\sigma+2\alpha\Delta_s-\Delta_n.
\end{align*}
By \cite[Theorem 4]{FedAmple}, the divisor $\psi_\tau+\psi_\sigma+2\alpha\Delta_s-\Delta_n$ is ample on $\Mg{0,\A}$.
\end{proof}
We obtain the following result which finishes the proof of Part 5 of Main Theorem \ref{main1}.
\begin{theorem}\label{T:logMMP-A} Let $k\in\{1,\dots,n-1\}$, then for any $\alpha\in  \left(\frac{1}{2}+\frac{1}{k+2}, \frac{1}{2}+\frac{1}{k+1}\right] \cap \QQ$ 
\[
H_{n}[k]=\proj R\left( \H_n[1], K_{\H_n[1]}+\alpha\delta_\irr+\delta_\red\right).
\]
In other words, the coarse moduli space of $\H_{n}[k]$ is a log canonical model of $\H_{n}[1]$.
\end{theorem}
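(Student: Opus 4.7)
The plan is to apply the standard log MMP criterion in the form of \cite[Proposition A.13]{HH1}: to identify $H_n[k]$ with $\proj R(\H_n[1], K_{\H_n[1]}+\alpha\delta_\irr+\delta_\red)$, it suffices to produce a birational morphism $f\co \H_n[1]\ra \H_n[k]$ satisfying three conditions: (a) the pair $(\H_n[1], \alpha\delta_\irr+\delta_\red)$ has log canonical singularities; (b) the divisor $K_{\H_n[k]}+\alpha\delta_\irr+\delta_\red$ descends to an ample $\QQ$-Cartier class on the coarse moduli space $H_n[k]$; and (c) the discrepancy
\[
K_{\H_n[1]}+\alpha\delta_\irr+\delta_\red - f^{*}\bigl(K_{\H_n[k]}+\alpha\delta_\irr+\delta_\red\bigr)
\]
is effective. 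The morphism $f$ is taken to be the composition of the divisorial contractions $\H_n[j-1]\ra \H_n[j]$, $j=2,\dots, k$, constructed in Section \ref{S:reduction} and appearing in Part (3) of Main Theorem \ref{main1}.

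Conditions (a) and (b) are the easy ones. For (a), $\H_n[1]$ is a smooth Deligne--Mumford stack by Theorem \ref{T:H-smooth-stack}, while $\delta_\irr$ and $\delta_\red$ are (\'etale locally) simple normal crossing divisors with coefficients at most $1$, so the pair is automatically log canonical. For (b), I would reparameterize by setting $w=\alpha-\tfrac{1}{2}$, so that the hypothesis $\alpha\in(\tfrac{1}{2}+\tfrac{1}{k+2},\tfrac{1}{2}+\tfrac{1}{k+1}]$ becomes $w\in(\tfrac{1}{k+2},\tfrac{1}{k+1}]$; this is precisely the range that defines $\H_n[k]=\H_{n,w}$, so Proposition \ref{P:ampleness-H} applies and shows that $K_{\H_n[k]}+(w+\tfrac{1}{2})\delta_\irr+\delta_\red=K_{\H_n[k]}+\alpha\delta_\irr+\delta_\red$ is the pullback of an ample $\QQ$-Cartier divisor on $H_n[k]$.

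The real work lies in (c), which I would verify step-by-step along the factorization $f=f_k\circ\cdots\circ f_2$. Each morphism $f_j\co \H_n[j-1]\ra \H_n[j]$ is a divisorial contraction whose exceptional locus $E_j$ generically parameterizes covers carrying a rational tail or bridge that gets collapsed to an $A_j$ singularity. By Proposition \ref{P:versality2} combined with Corollary \ref{C:pinkham-blowup}, an \'etale neighborhood of the generic point of $E_j$ is modeled by a weighted blow-up of $\Def(A_j)$ centered at the origin, equivariant for the $\GG_m$-action of \eqref{E:A-action}. In this local model both the relative canonical class $K_{f_j}$ and the multiplicities of $\delta_\irr$ and $\delta_\red$ along $E_j$ are computable directly from the Pinkham weights, and the positivity of the discrepancy of $f_j$ reduces to the numerical inequality $\alpha \leq \tfrac{1}{2}+\tfrac{1}{j+1}=\lct(A_j)$, which holds for every $j\leq k$ by assumption. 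The main obstacle is the explicit computation of these discrepancies and the precise identification of $E_j$ with the weighted projectivization of $\Def(A_j)$; the boundary case $\alpha=\lct(A_j)$ is exactly when $f_j$ becomes crepant, which explains the appearance of $\lct(A_k)$ as the threshold value. Once (c) is in place, \cite[Proposition A.13]{HH1} identifies $H_n[k]$ as the claimed log canonical model of $\H_n[1]$.
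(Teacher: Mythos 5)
Your proposal is correct and follows essentially the same route as the paper: both verify the hypotheses of \cite[Proposition A.13]{HH1} by combining the ampleness statement of Proposition \ref{P:ampleness-H} (after the same shift $\alpha \mapsto \alpha - \tfrac{1}{2}$ between the theorem's normalization and the weight-vector normalization) with the effectivity of the discrepancy along the reduction morphisms, which the paper packages as Lemma \ref{L:discrepancy}. Your local-model justification of the discrepancy via weighted blow-ups of $\Def(A_j)$ is a reasonable substitute for the paper's test-family computation, and the numerical threshold $\alpha \leq \lct(A_j)$ you identify matches the paper's bound $\alpha \leq \tfrac{k+3}{2(k+1)}$.
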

\begin{proof}
By Proposition \ref{P:ampleness-H}, the divisor $K_{\H_{n,\alpha}}+\alpha\delta_\irr+\delta_\red$ is a pullback of an ample divisor from the coarse
moduli space. Let $f\co \H_{n}[1]\ra\H_{n}[k]$, then the discrepancy
$$
K_{\H_{n}[1]}+\alpha\delta_{\irr}+\delta_{\red}-f^*(K_{\H_{n}[k]}+\alpha\delta_{\irr}+\delta_{\red})
$$
is effective for $\alpha\leq  \frac{k+3}{2(k+1)}$ by Lemma \ref{L:discrepancy}.
The thesis now follows from \cite[Proposition A.13]{HH1}.
\end{proof}
\subsection{Log canonical models of $\Tl{D_n}$}

Fix a number $n\geq 4$. In Section \ref{S:reduction}, we construct a
natural reduction morphism  $\H_n[k,\ell]\ra \H_n[k',\ell']$ (for 
$k'\geq k$ and $\ell'\geq \ell$) such that 
\begin{itemize}
\item If $k'$ is even (resp. odd), then any tail of genus $k'/2$ (resp. a bridge of genus $(k'-1)/2$) is replaced 
 by an $A_{k'}$ singularity. 
\item If $\ell'$ is even (resp. odd), then any pointed bridge of genus $\ell'/2-1$ (resp. a pointed tail of genus $(\ell'-1)/2$) is 
replaced by a $D_{\ell'}$ singularity.
 \end{itemize}
We obtain a lattice of moduli stacks, where each arrow is a divisorial contraction:
{\small
\begin{align*}
\xymatrix{
\H_{n}[1,1] \ar[d]   \ar[r]  & \H_{n}[1,2] \ar[d]\\
\H_{n}[2,1] \ar[r] \ar[d] & \H_{n}[2,2]  \ar[d]  \ar[r] & \H_{n}[2,3] \ar[d]  \\
\cdots \ar[r] \ar[d] & \cdots \ar[r] \ar[d] & \cdots \ar[d]  \ar[r] & \cdots\ar[d]\\
\H_{n}[n-2,1] \ar[r] \ar[d] & \H_{n}[n-2,2] \ar[r] \ar[d] & \cdots \ar[d]\ar[r] & \cdots \ar[r] \ar[d]& \H_{n}[n-2,n-1] \ar[d] & \\
\H_{n}[n-1,1] \ar[r]  & \H_{n}[n-1,2] \ar[r] & \cdots \ar[r] & \H_{n}[n-1,n-2] \ar[r] 
& \H_{n}[n-1,n-1]  &
}
\end{align*}
}
\noindent
$\H_{n}[1,1]$ is the stack of 
$1$-pointed admissible covers of genus $\lfloor\frac{n-1}{2}\rfloor$ and
by Example \ref{E:weighted-projective-spaces}
\[
\H_{n}[n-1,n-1]\simeq \left[\Def(D_n) \smallsetminus \mathbf{0} / \GG_m \right]
\simeq
\begin{cases} \P(n/2, 1,2,3,\dots,n-1), &\text{ if $n$ is even,}\\
 \P(n, 2,4,6,\dots,2n-2), &\text{ if $n$ is odd}.
 \end{cases}
 \]
 We now describe the intermediate spaces as log canonical 
 models of $\H_n[1,1]$. For this we look at special log canonical divisors 
 described in the following proposition.
\begin{prop}\label{P:ampleness-H-2} The divisor
$K_{\H_{n,\alpha,\beta}}+(\alpha+1/2)\delta_\irr+\delta_\red+(2\alpha+2\beta-1) \delta_{W}$ is a pullback of an ample divisor from the coarse moduli space. 
\end{prop}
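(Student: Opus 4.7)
The strategy follows that of Proposition \ref{P:ampleness-H}: express the log canonical divisor as a $\brr$-pullback from the coarse moduli space, then pull back further to $\Mg{0,\A}$ where ampleness can be checked using \cite[Theorem 4]{FedAmple}.

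Combining Proposition \ref{P:canonical-class-2} with the identities
$\delta_\irr = \brr^*\q(\Delta_s)$,
$\delta_\red = \brr^*\q(\Delta_\ev) + \tfrac{1}{2}\brr^*\q(\Delta_\odd)$, and
$\delta_W = \brr^*\q(\Delta_{\sigma\cap\chi})$,
the divisor in question can be written as $\brr^* M$ for an explicit $\QQ$-divisor $M$ on $R_\W$; collecting terms yields
$$M = K_{R_\W} + \q(\Delta_n) + \bigl(\alpha+\tfrac{1}{2}\bigr)\q(\Delta_s) + \bigl(2\alpha+2\beta-\tfrac{1}{2}\bigr)\q(\Delta_{\sigma\cap\chi}),$$
where $\Delta_n = \Delta_\odd + \Delta_\ev$ and the contributions of $\frac{1}{2}\q(\Delta_\odd + \Delta_{\sigma\cap\chi})$ from $K_{\H_{n,\alpha,\beta}}$ are used to complete the boundary coefficients.

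Next, pulling back to $\Mg{0,\A}$ via $\q$ and using that $\q$ is simply ramified over $\Delta_s$ together with $K_{\Mg{0,\A}} = \psi - 2\Delta_n$ (where $\psi = \psi_\tau + \psi_\sigma + \psi_\chi$), a direct computation gives
$$\q^*(M) = \psi + 2\alpha\Delta_s - \Delta_n + \bigl(2\alpha+2\beta-\tfrac{1}{2}\bigr)\Delta_{\sigma\cap\chi}.$$
By \cite[Theorem 4]{FedAmple} this class is ample on $\Mg{0,\A}$. Since $\q$ is finite and surjective, $M$ is therefore ample on $R_\W$, which is the coarse moduli space of $\H_{n,\alpha,\beta}$ by Diagram \eqref{D:main-diagram-2}. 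This yields the required conclusion.

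The main obstacle is verifying that the precise class $\psi + 2\alpha\Delta_s - \Delta_n + (2\alpha+2\beta-\tfrac{1}{2})\Delta_{\sigma\cap\chi}$ lies in the ampleness cone established in \cite[Theorem 4]{FedAmple} for the full parameter range $\alpha \in (0,1/2]$, $\beta \in (0,1-\alpha]$; a secondary subtlety lies in the bookkeeping of ramification when $\alpha+\beta \leq 1$, where $\Delta_{\sigma\cap\chi}$ becomes a genuine boundary divisor of $\Mg{0,\A}$ and must be distinguished from $\Delta_s$ in the Riemann-Hurwitz calculation.
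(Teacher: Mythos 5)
Your strategy coincides with the paper's (express the divisor as $\brr^*(D)$, pull back by $\q$, and invoke \cite[Theorem 4]{FedAmple}), but the coefficient you obtain for $\Delta_{\sigma\cap\chi}$ is wrong, and the error is exactly what forces you to leave the applicability of \cite[Theorem 4]{FedAmple} as an unresolved ``main obstacle.'' The missing ingredient is the ramification of the \emph{branch} morphism, not of $\q$: as recorded in Section \ref{S:canonical-2}, $\brr\co \H_{n,\alpha,\beta}\ra R_{\W}$ is simply ramified over $\q(\Delta_{\sigma\cap\chi})$ as well as over $\q(\Delta_{\odd})$, because the two lifts of $\varphi(\chi)$ to $\X$ coalesce exactly when $\chi$ becomes a ramification point. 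Hence the reduced Weierstrass divisor --- which is what $\delta_W$ means in the statement (``covers with a marked ramification point'') and in Lemma \ref{L:discrepancy} --- satisfies $2\delta_W=\brr^*\q(\Delta_{\sigma\cap\chi})$, in complete analogy with the coefficient $\frac{1}{2}$ on $\brr^*\q(\Delta_{\odd})$ in the formula for $\delta_{\red}$. You instead treated $\delta_W$ as the full pullback $\brr^*\q(\Delta_{\sigma\cap\chi})$.

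With the factor $\frac{1}{2}$ restored, the term $(2\alpha+2\beta-1)\delta_W$ contributes $\frac{1}{2}(2\alpha+2\beta-1)\q(\Delta_{\sigma\cap\chi})$ to $D$, and combining with the $+\frac{1}{2}\Delta_{\sigma\cap\chi}$ in $\q^*(K_{\H_{n,\alpha,\beta}})$ from Proposition \ref{P:canonical-class-2} gives total coefficient $\frac{1}{2}+\frac{1}{2}(2\alpha+2\beta-1)=\alpha+\beta$, so that
\[
\q^*(D)=\psi_\tau+\psi_\sigma+\psi_\chi-\Delta_n+2\alpha\Delta_s+(\alpha+\beta)\Delta_{\sigma\cap\chi}.
\]
This is precisely the shape to which \cite[Theorem 4]{FedAmple} applies for $\A=(1,\beta,\alpha,\dots,\alpha)$: each collision divisor carries coefficient equal to the sum of the colliding weights ($2\alpha$ for $\Delta_s$, $\alpha+\beta$ for $\Delta_{\sigma\cap\chi}$), exactly as in Proposition \ref{P:ampleness-H}, and ampleness is immediate across the whole parameter range. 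Your class $\psi+2\alpha\Delta_s-\Delta_n+(2\alpha+2\beta-\frac{1}{2})\Delta_{\sigma\cap\chi}$ agrees with the correct one only when $\alpha+\beta=\frac{1}{2}$ and in general falls outside the range of that theorem. (Your handling of $\q$ itself is fine: the generic point of $\Delta_{\sigma\cap\chi}$ has trivial $\S_n$-stabilizer, so $\q$ is unramified there and $\q^*\q(\Delta_{\sigma\cap\chi})=\Delta_{\sigma\cap\chi}$; the ``secondary subtlety'' you raise is not where the problem lies.)
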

\begin{proof}
By Proposition \ref{P:canonical-class-2}, we have 
$K_{\H_{n,\alpha,\beta}}+(\alpha+1/2)\delta_\irr+\delta_\red+(2\alpha+2\beta-1) \delta_{W}=\brr^*(D)$ where $D$ is the divisor
on $\Mg{0,\A}/\S_{n}$ satisfying 
\begin{align*}
\q^*(D)&=(\psi-\Delta_{s}-2\Delta_{\ev}-\frac{3}{2}\Delta_{\odd}
+\frac{1}{2}\Delta_{\sigma\cap \chi}) +(2\alpha+1)\Delta_s \\ &{\quad}+
(\Delta_{\ev}+\frac{1}{2}\Delta_{\odd}) +\frac{1}{2}(2\alpha+2\beta-1)\Delta_{\sigma\cap \chi}\\
&=\psi_\tau+\psi_\sigma+\psi_\chi-\Delta_n+2\alpha\Delta_s+(\alpha+\beta)\Delta_{\sigma\cap\chi}.
\end{align*}
Applying \cite[Theorem 4]{FedAmple} for the weight vector $\A=(1,\beta, \underbrace{\alpha,\dots,\alpha}_{n})$
we conclude that 
$$\psi_\tau+\psi_\sigma+\psi_\chi-\Delta_n+2\alpha\Delta_s+(\alpha+\beta)\Delta_{\sigma\cap\chi}$$
is ample on $\Mg{0,\A}$. 
\end{proof}
Proposition \ref{P:ampleness-H-2} together with Lemma \ref{L:discrepancy} 
and \cite[Proposition A.13]{HH1} finish the proof of Part 5 of Main Theorem \ref{main2}.
Specializing to the case of $\alpha+\beta=1/2$, 
we obtain the following result.
\begin{theorem}\label{T:logMMP-D} Let $k\in\{2,\dots,2n-4\}$. 
Then $\H_{n,1/k,(k-2)/2k}=\H_{n}[k-1,\ell]$, where $\ell=\lfloor k/2\rfloor +1$. 
In particular, for $\alpha=\frac{1}{2}+\frac{1}{k}$
\[
H_{n}[k-1,\ell]=\proj R\left( \H_n[1,1], K_{\H_n[1,1]}+\alpha\delta_\irr+\delta_\red\right).
\]
\end{theorem}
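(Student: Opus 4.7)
The plan is to reduce the statement to the framework already assembled for Main Theorem \ref{main2}(5), and then exploit the specific relation $\alpha+\beta=1/2$ to drop the Weierstrass contribution.

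First, I would verify that the parameters $\alpha=1/k$, $\beta=(k-2)/(2k)$ satisfy the inequalities \eqref{inequalities} with $k'=k-1$ and $\ell'=\lfloor k/2\rfloor+1$, which is an elementary algebraic check: $1/(k+1)<1/k\le 1/k$ gives $k'=k-1$, and rewriting $1-(\ell'+1)\alpha<\beta\le 1-\ell'\alpha$ as $k/2<\ell'\le (k+2)/2$ pins down $\ell'=\lfloor k/2\rfloor+1$. Hence $\H_{n,1/k,(k-2)/2k}=\H_{n}[k-1,\ell]$ by Definition, and a direct computation yields $\alpha+\beta=1/2$, so $2\alpha+2\beta-1=0$.

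Next, I would invoke Proposition \ref{P:ampleness-H-2}: for these values of $\alpha,\beta$ the divisor
\[
K_{\H_{n,\alpha,\beta}}+(\alpha+1/2)\delta_{\irr}+(2\alpha+2\beta-1)\delta_{W}+\delta_{\red}=K_{\H_{n,\alpha,\beta}}+\Bigl(\tfrac{1}{2}+\tfrac{1}{k}\Bigr)\delta_{\irr}+\delta_{\red}
\]
is the pullback of an ample divisor from the coarse moduli space $H_{n}[k-1,\ell]$. This is where the vanishing of the $\delta_W$-coefficient is crucial: it allows us to phrase the result purely in terms of $\delta_{\irr}$ and $\delta_{\red}$, matching the log canonical divisor in the statement.

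Finally, let $f\co \H_n[1,1]\to \H_n[k-1,\ell]$ denote the reduction morphism constructed in Section \ref{S:reduction}. Applying Lemma \ref{L:discrepancy} to $f$, I would check that the discrepancy
\[
K_{\H_n[1,1]}+\Bigl(\tfrac{1}{2}+\tfrac{1}{k}\Bigr)\delta_{\irr}+\delta_{\red}\;-\;f^{*}\!\Bigl(K_{\H_n[k-1,\ell]}+\Bigl(\tfrac{1}{2}+\tfrac{1}{k}\Bigr)\delta_{\irr}+\delta_{\red}\Bigr)
\]
is effective, which is precisely the critical-slope statement of that lemma at the log canonical threshold $\lct(A_{k-1})=\tfrac{1}{2}+\tfrac{1}{k}$ (and the corresponding threshold for the $D$-contractions under the assumption $\alpha+\beta\le 1/2$). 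Invoking \cite[Proposition A.13]{HH1} with these two inputs---ampleness on the target and effectivity of the discrepancy on the source---identifies the coarse moduli space of $\H_n[k-1,\ell]$ with $\proj R(\H_n[1,1],K_{\H_n[1,1]}+(\tfrac{1}{2}+\tfrac{1}{k})\delta_{\irr}+\delta_{\red})$, which is the desired conclusion.

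The main obstacle is the discrepancy calculation: the reduction morphism $f$ factors through many intermediate $\H_n[k'',\ell'']$, each contracting either an $A$- or a $D$-bridge/tail, and one must verify that at the special slope $\alpha+\beta=1/2$ every such exceptional divisor receives a nonnegative discrepancy for the pair $(K+(\tfrac{1}{2}+\tfrac{1}{k})\delta_{\irr}+\delta_{\red})$. This is exactly what Lemma \ref{L:discrepancy} is designed to deliver, so the proof reduces to correctly applying it at the critical value of $\alpha$; the vanishing of the $\delta_{W}$-coefficient is what prevents a separate Weierstrass-discrepancy term from appearing.
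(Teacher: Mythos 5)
Your proposal is correct and follows essentially the same route as the paper, which proves this theorem by specializing to $\alpha+\beta=1/2$ the combination of Proposition \ref{P:ampleness-H-2}, Lemma \ref{L:discrepancy}, and \cite[Proposition A.13]{HH1}. Your explicit verification of the inequalities pinning down $(k-1,\lfloor k/2\rfloor+1)$ and of the vanishing of the $\delta_W$-coefficient fills in exactly the details the paper leaves to the reader.
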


\begin{remark} Note that by Theorem \ref{T:logMMP-D},
the singularities $A_{k-1}$ and $D_\ell$ appear at the threshold values of $\alpha$ that equal to  
the log canonical thresholds of the discriminants inside deformation 
spaces $\Def(A_{k-1})$ and $\Def(D_\ell)$ (see Section \ref{S:lct}).
\end{remark}


\section{Reduction morphisms}
\label{S:reduction}
In this section, we establish the existence of natural reduction morphisms between stacks 
$\H_{n}[k,\ell]$.
Recall that $\H_{n}[k,\ell]=\H_{n,\alpha,\beta}$ parameterizes pointed quasi-admissible hyperelliptic covers  
$(\varphi\co \X\ra \Y; \tau, \chi, D)$, where the branch divisor $D$ has degree $n$ and weight $\alpha$ and the section $\chi$ has weight 
$\beta$.
The integers $k$ and $\ell$ are determined uniquely by the inequalities \begin{eqnarray}
\label{inequalities2}
\frac{1}{k+2} < \alpha \leq \frac{1}{k+1}, \\ 
1-(\ell+1)\alpha< \beta \leq 1-\ell \alpha. \notag
\end{eqnarray}
The starting point for us will be the existence of reduction morphisms
between moduli stacks of divisorially marked rational curves. Namely,
if $\W=(1,\beta,\alpha^n)$ and $\W'=(1,\beta',(\alpha')^{n})$ are weight vectors such that $\alpha\geq \alpha'$ and $\beta\geq \beta'$, then by
\cite[Theorem 4.1]{Hweights}\footnote{The proof in \cite{Hweights} is given for weighted pointed curves but generalizes word for word to the situation of divisorially marked curves.} there exists the reduction morphism 
$r_{\W,\W'}\co \Mdiv{\W}\ra \Mdiv{\W'}$. 
Before we proceed we make a useful observation.
\begin{lemma}\label{L:increasing}
Let $(k,\ell)$ and $(k',\ell')$ be the integers associated to weight vectors 
$\W$ and $\W'$ and uniquely determined by Inequalities \eqref{inequalities2}. Then 
the reduction morphism 
$r_{\W,\W'}\co \Mdiv{\W} \ra \Mdiv{\W'}$ exists
 as long as $k\leq k'$ and $\ell\leq \ell'$. When this happens we say that 
 $(k,\ell)\preceq (k',\ell')$.
\end{lemma}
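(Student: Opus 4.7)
The plan is to deduce the existence of $r_{\W,\W'}$ from Hassett's reduction morphism \cite[Theorem 4.1]{Hweights}, after first establishing that the isomorphism class of $\Mdiv{\W}$ depends on the weight vector $\W=(1,\beta,\alpha^n)$ only through the associated integer pair $(k,\ell)$ of \eqref{inequalities2}. Given this chamber-independence, the reduction morphism will be assembled as a composition of Hassett reductions for auxiliary weight vectors lying in appropriate adjacent chambers.

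Chamber-independence follows by translating both conditions of Definition \ref{D:stable-div-marked} into combinatorial constraints depending only on $(k,\ell)$. The multiplicity bound at a point where $m$ weight-$\alpha$ points (possibly together with the weight-$\beta$ point) collide reads $m\alpha\leq 1$ (respectively $m\alpha+\beta\leq 1$), and the inequalities \eqref{inequalities2} recast these as $m\leq k+1$ (respectively $m\leq \ell$). For ampleness on an irreducible component $C_j$ with $n_j$ nodes, $t_j\in\{0,1\}$ weight-$1$ points, $a_j$ weight-$\alpha$ points, and $b_j\in\{0,1\}$ weight-$\beta$ points, the inequality $n_j-2+t_j+\alpha a_j+\beta b_j>0$ is automatic when $n_j+t_j\geq 2$ (and $n_j+t_j\geq 1$ always by connectedness); in the remaining case $n_j+t_j=1$, a short case analysis using \eqref{inequalities2} shows that $\alpha a_j+\beta b_j>1$ is equivalent to $a_j\geq k+2$ when $b_j=0$ and to $a_j\geq \ell+1$ when $b_j=1$. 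Thus the set of $\W$-stable curves, and hence the open substack $\Mdiv{\W}\subset\Mdiv{}$, depends only on $(k,\ell)$.

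Having chamber-independence in hand, I decompose the relation $(k,\ell)\preceq(k',\ell')$ into a chain of elementary covering steps of the form $(k,\ell)\to(k,\ell+1)$ or $(k,\ell)\to(k+1,\ell)$, and realize each such step by a Hassett reduction for a concretely chosen pair of weight vectors. For $(k,\ell)\to(k,\ell+1)$, I fix a common $\alpha=\alpha'$ in the $k$-range and pick $\beta\in(1-(\ell+1)\alpha,1-\ell\alpha]$ and $\beta'\in(1-(\ell+2)\alpha,1-(\ell+1)\alpha]$. For $(k,\ell)\to(k+1,\ell)$, I pick $\alpha>\alpha'$ in the adjacent $\alpha$-ranges, sufficiently close to the common boundary $1/(k+2)$ that $\alpha/\alpha'<(\ell+1)/\ell$; this guarantees that the two $\beta$-strips $(1-(\ell+1)\alpha,1-\ell\alpha]$ and $(1-(\ell+1)\alpha',1-\ell\alpha']$ overlap on $(1-(\ell+1)\alpha',1-\ell\alpha]$, in which I take $\beta=\beta'$. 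In both cases $\alpha\geq\alpha'$ and $\beta\geq\beta'$, so \cite[Theorem 4.1]{Hweights} produces a reduction morphism; composing these with the chamber-independence identifications yields $r_{\W,\W'}$ for arbitrary $(k,\ell)\preceq(k',\ell')$.

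The main obstacle is the ampleness half of chamber-independence; the remaining steps amount to careful bookkeeping with \eqref{inequalities2}.
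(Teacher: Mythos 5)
Your proof is correct and follows essentially the same route as the paper's: observe that $\Mdiv{\W}$ depends only on the chamber $(k,\ell)$, reduce to the elementary steps $(k,\ell)\to(k+1,\ell)$ and $(k,\ell)\to(k,\ell+1)$, and for each step tweak the representatives $(\alpha,\beta)$ and $(\alpha',\beta')$ within their chambers so that $\alpha\geq\alpha'$ and $\beta\geq\beta'$, whereupon \cite[Theorem 4.1]{Hweights} applies. The paper merely asserts the chamber-independence as clear and picks slightly different explicit weights, whereas you spell out the combinatorics; the only nitpick is that positivity of $\omega$ on a component with $n_j+t_j=2$ additionally requires $a_j+b_j\geq 1$, but since that condition is also weight-independent your conclusion is unaffected.
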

\begin{proof} The lemma says that the morphism $r_{\W,\W'}$ exists 
even when $\beta'>\beta$ or $\alpha'>\alpha$ as long as the condition $(k,\ell)\preceq (k', \ell')$ is satisfied.
The idea of proof is clear: The $\W$\nb-stability is a condition on how many points of weight $\alpha$ and $\beta$ can come together and so $\Mdiv{\W}$ does not change as long as integers $k$ and $\ell$ remain constant. We 
establish the existence of the reduction morphism after tweaking the weights in such a way that $\Mdiv{\W}$ and $\Mdiv{\W'}$ are unchanged but 
\cite[Theorem 4.1]{Hweights} now applies.

It suffices to supply the proof for the cases $(k',\ell')=(k+1,\ell)$ and $(k',\ell')=(k,\ell+1)$. In the former case, we take $\alpha=1/(k+2)+\varepsilon$, 
$\alpha'=1/(k+2)$, $\beta=1-\ell\alpha$, $\beta'=1-(\ell'+1)\alpha+\delta$, where $0<\varepsilon, \delta \ll 1$. Then 
$\alpha> \alpha'$ and $\beta> \beta'$, so the reduction morphism exists. In the latter case, we take $\alpha=\alpha'=1/(k+1)$, $\beta=1-\ell\alpha$,
$\beta'=1-(\ell'+1)\alpha$. Then $\alpha= \alpha'$ and $\beta> \beta'$, so the reduction morphism exists.
\end{proof}

\begin{theorem}\label{T:reduction-image} Let $\W=\left(1, \beta, \alpha^{n}\right)$ and 
$\W'=\left(1, \beta', (\alpha')^{n}\right)$ be weight vectors satisfying $(k,\ell)\preceq (k',\ell')$.
Then there exists 
a natural birational {\em reduction} $1$\nb-morphism
\[
r_{\W,\W'}\co \H_{n}[k,\ell] \ra \H_{n}[k',\ell'].
\] 
\end{theorem}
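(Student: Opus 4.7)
The plan is to factor $r_{\W,\W'}$ through the moduli stack of even rational orbicurves and invoke Lemma \ref{L:even-line-bundle} (together with Lemma \ref{L:splitting}) to reconstruct the double cover over the contracted base. By Lemma \ref{L:increasing}, there is a Hassett-style reduction $R\co\Mdiv{\W}\to\Mdiv{\W'}$, which composed with the branch morphism yields $\H_n[k,\ell]\to\Mdiv{\W'}$. To lift this to $\Mdiv{\W'}^{\ev}$, identified in Theorem \ref{T:even-orbicurves} with the root stack $\Mdiv{\W'}\sqrt[2]{\delta_\odd}$, observe that the orbinodes of $\Y$ supply a canonical square root of the pullback of $\O(\delta_\odd)$ from $\Mdiv{\W}$, and this descends along the contraction $\rho\co \Y\to \Y'$ because $\W'$-stability forces any contracted subcurve to carry an even total branch weight. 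In particular the odd nodes of $\Y$ map to odd nodes of $\Y'$, and the orbistructure descends canonically.

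Given the lift to $\Mdiv{\W'}^{\ev}$, apply Lemma \ref{L:even-line-bundle} to produce the unique line bundle $\L'\in \Pic(\Y')$ satisfying $(\L')^{\otimes 2}\simeq \O_{\Y'}(D')$ and $(\tau')^*\L'\simeq \O_T$, and set
\[
\X' := \Spec_{\Y'}\bigl(\O_{\Y'}\oplus (\L')^{-1}\bigr),
\]
with the algebra structure supplied by the defining section of $D'$, as in Lemma \ref{L:splitting}. The marked section $\chi'$ is then defined by means of ramification: the composition $\rho\circ \varphi\circ\chi\co T\to \Y'$ lands in the branch divisor $D'$, which is precisely the image of the canonical ramification divisor of $\varphi'\co\X'\to\Y'$; thus $\chi$ determines a unique lift $\chi'\co T\to\X'$ through the ramification section over $D'$.

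It remains to verify that $(\varphi'\co\X'\to\Y';\tau',\chi',D')$ belongs to $\H_n[k',\ell']$, i.e., that $\X'$ has at worst $A_{k'}$ singularities away from $\chi'$ and at worst $D_{\ell'}$ at $\chi'$. This is a pointwise assertion controlled by the local multiplicities of $D'$: $\W'$-stability bounds $\mult_p D'\le k'+1$ at every $p\neq \varphi'(\chi')$ and $\mult_{\varphi'(\chi')}D'\le \ell'$, and the resulting singularity types on the double cover $y^2=f(x)$ are $A_{k'}$ and (via Proposition \ref{P:A=D}) $D_{\ell'}$ respectively. Functoriality in $T$ is automatic from the functoriality of $R$, $\Spec$, and the uniqueness of $\L'$, and birationality is immediate because on the open locus where no contraction occurs the morphism is the identity. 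The main obstacle will be the compatibility of the reconstructed $\X'$ with the original $\X$ on the uncontracted locus: this reduces to checking that $\L'$ restricts along $\rho$ to the canonical line bundle $\L$ of Lemma \ref{L:splitting} over $\Y\smallsetminus E$, which follows from the uniqueness clause of Lemma \ref{L:even-line-bundle} once one matches the trivializations along $\tau$ and $\tau'$.
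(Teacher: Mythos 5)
Your overall strategy---push the base down via the Hassett reduction, lift to $\Mdiv{\W'}^{\ev}$, and rebuild the double cover from the square root of $\O_{\Y'}(D')$---is close in spirit to the paper's proof, which likewise starts from the contraction $\xi\co Y\ra Y'$ of \cite[Theorem 4.1]{Hweights} and recovers $\X'$ as $\Spec_{\Y'}\bigl(\O_{\Y'}\oplus(\L')^{-1}\bigr)$. But two of your steps have genuine gaps. The most serious is the construction of $\chi'$: you assert that $\rho\circ\varphi\circ\chi$ lands in the branch divisor $D'$ and then lift $\chi'$ through ``the ramification section over $D'$.'' This is false. The point $\varphi(\chi)$ is the weight-$\beta$ marked point of the base, which is disjoint from the weight-$\alpha$ branch divisor except along the Weierstrass divisor $\delta_W$; for a generic member of $\H_{n}[k,\ell]$ the cover $\varphi'$ is \'{e}tale over the image of $\varphi(\chi)$, so there are two candidate lifts and no canonical choice is available from $\Y'$ alone. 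Pinning down $\chi'$ requires the comparison map $\X\ra\X'$; in the paper this is automatic because $\X'$ is defined as $\Spec$ of the pushforward algebra $\xi_*(\O_\Y\oplus\L^{-1})$, so $\X\ra\X'\ra\Y'$ is a Stein factorization and $\chi'$ is simply the image of $\chi$. In your proposal the identification of $\X'$ with $\X$ away from the exceptional locus is only addressed at the very end, after $\chi'$ has supposedly been defined, so the argument is circular as written.

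The second gap is the descent of the orbistructure. Your justification---that $\W'$-stability forces any contracted subcurve to carry an even total branch weight---is not true: the reduction $\H_{n}[k,\ell]\ra\H_{n}[k+1,\ell]$ with $k$ odd collapses tails carrying $\deg D\vert_{R}=k+2$ branch points, an odd number, and the pointed contractions collapse components carrying $\ell+1$ branch points together with $\varphi(\chi)$. What actually makes the lift to $\Mdiv{\W'}\sqrt[2]{\delta_{\odd}}$ possible is that the surviving nodes of $Y'$ correspond to non-contracted nodes of $Y$ with unchanged parity, so the square-root data on $T$ coming from $\Mdiv{\W}\sqrt[2]{\delta_{\odd}}$ restricts to the surviving components; this needs an argument about how boundary components pull back under the reduction morphism, which you do not supply. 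Relatedly, Lemma \ref{L:even-line-bundle} is stated for families of semistable rational curves that are schemes, carrying a line bundle of even degree on every component of every fiber; it does not apply verbatim to $\O_{\Y'}(D')$ on the orbicurve $\Y'$, whose restriction to components adjacent to odd nodes has odd degree. You would need to route through the blow-up/double-cover/quotient construction in the proof of Theorem \ref{T:even-orbicurves}, or else do what the paper does: observe that near the contracted locus $\xi(\E)$ the total space $\Y'$ is a smooth scheme with $\codim(\xi(\E),\Y')\geq 2$, so that $\L$ and $D$ extend uniquely from the complement and the extension automatically satisfies $(\L')^{\otimes 2}\simeq\O_{\Y'}(D')$.
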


The case of $\H_{n}[k]$ is simpler. We state it here without a proof
for the sake of completeness:
\begin{theorem}\label{T:reduction1} Let $\W=\left(1, \alpha^{n+1}\right)$ and 
$\W'=\left(1, (\alpha')^{n+1}\right)$ be the weight vectors 
with $\alpha\in (1/(k+2), 1/(k+1)]$ and $\alpha' \in (1/(k'+2), 1/(k'+1)]$, 
where $k'\geq k$.
Then there exists 
a natural birational {\em reduction} $1$\nb-morphism
\[
r_{\W,\W'}\co \H_{n}[k] \ra \H_{n}[k'].
\]
\end{theorem}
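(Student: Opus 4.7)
The plan is to construct the reduction morphism in two stages: first contract the branch curve using the Hassett-style reduction on divisorially marked rational curves, then descend the degree-two cover structure via the line-bundle presentation of Lemma~\ref{L:splitting}. Birationality and functoriality follow essentially from the construction.

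For the first stage, given a family of $\W$-stable quasi-admissible covers $(\varphi\co \X\ra \Y;\tau,D)$ over $T$, the branch morphism produces the $\W$-stable divisorially marked family $(\Y;\tau,D)$. The analog of \cite[Theorem 4.1]{Hweights} for divisorially marked curves yields a $\W'$-stable family $(\Y';\tau',D')$ together with a $T$-morphism $\psi\co Y\ra Y'$ on coarse moduli spaces contracting precisely those connected rational subcurves of geometric fibers which carry at most $k'+1$ points of the branch divisor (counted with multiplicity). Parity of the branch degree on either side of a node is preserved when a genus-zero subcurve is contracted, so the odd nodes of $\Y'$ correspond exactly to the non-contracted odd nodes of $\Y$. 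Theorem~\ref{T:even-orbicurves} then endows $\Y'$ with a canonical even rational orbicurve structure compatible with the contraction $\psi$.

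For the second stage, by Lemma~\ref{L:splitting}, $\X\simeq\Spec_\Y(\O_\Y\oplus \L^{-1})$ for a unique $\L\in\Pic(\Y)$ with $\L^{2}\simeq\O_\Y(D)$ and $\tau^*\L\simeq\O_T$. I will produce $\L'\in\Pic(\Y')$ with $(\L')^{2}\simeq\O_{\Y'}(D')$ and $(\tau')^*\L'\simeq\O_T$, and then define $\X':=\Spec_{\Y'}(\O_{\Y'}\oplus(\L')^{-1})$. Uniqueness of $\L'$ is immediate from Lemma~\ref{L:even-line-bundle}. For existence, set $\L':=(\psi_*\L)^{\vee\vee}$: on each contracted subcurve $Z\subset \Y_t$, the restriction $\L\vert_Z$ has degree $\tfrac{1}{2}\deg(D\vert_Z)\leq \lceil (k'+1)/2\rceil$, with the necessary half-integer parities at attaching orbinodes automatically satisfied because $\Y$ carries an even orbicurve structure. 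Since $Z$ is a tree of rational orbicurves, $R^{1}\psi_*\L=0$, so $\psi_*\L$ is torsion-free of rank one on the smooth orbicurve $\Y'$, and reflexivization produces a genuine line bundle. The required isomorphism $(\L')^{2}\simeq\O_{\Y'}(D')$ and the trivialization along $\tau'$ hold away from the collapsed points and then extend uniquely by Lemma~\ref{L:even-line-bundle} applied to $(\L')^{2}\otimes\O_{\Y'}(-D')$.

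It remains to verify that $(\varphi'\co \X'\ra \Y';\tau',D')$ is an object of $\H_{n}[k']$ and that the assignment is a $1$-morphism. The singularities of $\X'$ over a collapsed point have local equation $y^{2}=x^{m}$ with $m\leq k'+1$, hence are at worst $A_{k'}$; away from the collapsed points $\varphi'$ agrees with the \'etale/branched model coming from $\varphi$. The arithmetic genus of $\X'$ equals $\lfloor n/2\rfloor$ by Riemann--Hurwitz, as $\deg D'=\deg D=n+1$. Functoriality in $T$ follows because each operation used (the Hassett reduction, the orbi-structure from Theorem~\ref{T:even-orbicurves}, $\psi_*$, reflexivization on a smooth orbicurve, and relative $\Spec$) commutes with arbitrary base change in characteristic zero. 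Birationality is evident on the open substack of $\H_{n}[k]$ consisting of covers whose branch divisor has every point of multiplicity at most $k+1$: there $\psi$ is an isomorphism and $r_{\W,\W'}$ is the identity. The principal technical obstacle will be the existence half of the line-bundle descent; the parity bookkeeping between the branch multiplicity on a contracted subcurve and the orbifold structure at its attaching nodes (the same interplay driving Lemma~\ref{L:even-line-bundle}) is the heart of the argument and will require a short case analysis by the parity of $\deg(D\vert_Z)$.
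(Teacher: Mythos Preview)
Your two-stage strategy (apply Hassett's reduction to the branch datum, then descend the double cover via the line bundle $\L$ of Lemma~\ref{L:splitting}) is exactly the paper's approach: Theorem~\ref{T:reduction1} is stated without proof and deferred to the pointed case, Theorem~\ref{T:reduction-image}, whose proof follows precisely this outline.

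The one substantive difference, and the place your argument has a gap, is in how $\L'$ is produced and how functoriality is established. You set $\L':=(\psi_*\L)^{\vee\vee}$ and assert that reflexivization on $\Y'$ commutes with arbitrary base change. It does not: over a non-regular base $T$ the total space $\Y'$ need not be regular near the contracted locus, so a reflexive rank-one sheaf there need not be invertible, and even when it is, its formation does not in general commute with restriction to a closed point of $T$. The paper avoids this by first passing to a smooth surjective atlas $T\to\H_n[k]$ from a smooth scheme. Then $\Y'$ is regular in a Zariski neighborhood of the image $\xi(\E)$ of the contracted curves, that image has codimension $\geq 2$, and $\L'$, $D'$ are obtained as the \emph{unique} extensions of $\L$, $D$ across this codimension-two locus (a Hartogs-type argument). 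One then checks only that the construction commutes with \emph{smooth} base change, which is enough for descent of the resulting map $T\to\H_n[k']$ to a $1$-morphism of stacks. Your reflexive-hull construction agrees with this Hartogs extension once you restrict to smooth $T$, so the fix is simply to insert the passage to a smooth atlas at the outset and weaken ``arbitrary base change'' to ``smooth base change'' throughout. (A minor additional point: your appeal to Lemma~\ref{L:even-line-bundle} to extend the isomorphism $(\L')^{2}\simeq\O_{\Y'}(D')$ is not quite what that lemma says; the paper gets this identity directly from the codimension-two extension, since both sides are line bundles agreeing off a codimension-two subset of a regular scheme.)
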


\begin{proof}[Proof of Theorem \ref{T:reduction-image}:]
 As in Lemma \ref{L:increasing}, we can assume that either 
  $(k',\ell')=(k+1,\ell)$ or $(k',\ell')=(k,\ell+1)$. Moreover, 
  we can tweak weight vectors $\W$ and $\W'$ so that $\alpha\geq \alpha'$ and $\beta\geq \beta'$ without changing $\H_n[k,\ell]$ and $\H_n[k',\ell']$.
  
 Denote $\H:=\H_{n}[k,\ell]$ and $\H':=\H_{n}[k',\ell']$.
As $\H$ is a smooth proper Deligne-Mumford stack over $\mathbb{K}$ 
by Theorems \ref{T:DM} and \ref{T:H-smooth-stack}, 
there exists a surjective smooth morphism $T\ra \H$ from a smooth scheme $T$. We will construct a morphism $f\co T\ra \H'$
such that for $T\times_{\H}T \stackrel{\pr_1,\, \pr_2}{\longrightarrow} T$
 the two morphisms $f\circ \pr_1$
 and $f\circ \pr_2$ are
isomorphic and the isomorphism satisfies the cocycle conditions on 
$T\times_{\H}T\times_{\H}T$. By descent,
this will define a $1$-morphism $\H \ra \H'$. 

 
 To begin, let $(\varphi\co \X\ra \Y; \tau, \chi, D)$ be the exhausting family over 
 $T$. By Lemma \ref{L:splitting}, $\X\simeq \Spec_{\Y}\O_{\Y}\oplus \L^{-1}$,
 where $\L^{2}=\O_{\Y}(D)$. 
Let $\rho\co \Y\ra Y$ be the morphism to the coarse moduli space. 
By definition, $Y$ is a family of $\W$\nb-stable divisorially marked rational curves.
By \cite[Theorem 4.1]{Hweights}, we can form a $T$\nb-morphism $\xi\co Y\ra Y'$
such that $Y'$ is a $\W'$\nb-stable rational curve. 
Moreover, the formation of $\xi$ commutes with base change. 
Briefly, the reduction morphism $\xi$ contracts all rational tails in the fibers of $Y\ra T$ on which the divisor 
$\omega_{Y/T}(\tau+\alpha'D+\beta'\chi)$ has non-positive degree to smooth points in the fibers of $Y' \ra T$. 
Denote by $\E$ the union of the contracted curves.
Then $\xi$ is an isomorphism away from $\E$. 

Set $U:=Y'-\Sing(Y'/T)$ and $V:=Y'-\xi(E)$. Then $U$ and $V$ form an open cover of $Y'$. 
Since $\rho\co \rho^{-1}(V) \ra V$ is an isomorphism, we can construct a $T$-orbicurve 
$\Y'$ as the union of $U$ and $\rho^{-1}(V)$ along $U\cap V$. 
In words, $\Y'$ has the same stack structure as $\Y$ away
from $\xi(\E)$ and is isomorphic to $Y'$ in a Zariski neighborhood of $\xi(\E)$. The $1$\nb-morphism
$\Y'\ra \Y$ is also denoted by $\xi$. 

Since $T$ is smooth, we have that 
$\Y'$ is a smooth scheme in the neighborhood of 
$\xi(\E)$. Moreover,
we have
$\codim(\xi(\E),\Y')\geq 2$ and $\codim(\Sing(\Y'/T), \Y')\geq 2$. Therefore the 
line bundle $\L$ and the Cartier divisor $D$ uniquely extend
 from $U\cap V$ to $\Y'$. Denote these extensions by 
$\L'$ and $D'$. Clearly, $(\L')^{\otimes 2}=\O_{\Y'}(D')$.
The compositions
$\tau'=\xi\circ \tau\co T\ra \Y'$ and  $\chi'=\xi\circ\chi\co T\ra \Y'$ are sections.

By construction, $\Y'$ marked by $\tau', \chi'$ and $D'$ is a $\W'$-stable even 
rational orbicurve over $T$. 
Since the formation of $Y'$ commutes with base change, so does the formation of $\Y'$. Same
holds for the formation of $\tau'$ and $\chi'$. Finally, the formation of $D'$ commutes
with {\em smooth} base change. It follows that $(\Y'; \tau', \chi', D')$ 
is a $\W'$-stable 
even rational orbicurve and there is an isomorphism $\pr_1^*\Y' \simeq \pr_2^*\Y'$ 
of even rational orbicurves over $T\times_{\H} T$ whose pullbacks satisfy the cocycle 
conditions on $T\times_{\H}T\times_{\H}T$.

We now define $\X':=\Spec_{\Y'} (O_{\Y'}\oplus (\L')^{-1})$. From the 
construction,
$\xi_*(\O_\Y\oplus \L^{-1})=O_{\Y'}\oplus {\L'}^{-1}$.
It follows that $\xi_*(\varphi_*\O_\X)=O_{\Y'}\oplus (\L')^{-1}$.
The induced morphism $\X\ra \X'$ is such that  
$\X\ra \X' \ra \Y'$ is the Stein factorization of $\X\ra \Y'$. Clearly, $\X'\ra \X$ contracts those
components in the fibers of $\X \ra T$ which cease to be $\W$-stable.
Finally, if $\tau$ is even, we define sections $\tau_{1},\tau_{2}\co T\ra \X'$ as compositions
of $\tau_{1},\tau_{2}\co T\ra \X$ and $\X\ra \X'$. This finishes the construction 
of the family of quasi-admissible covers over $T$ inducing a morphism
$f\co T\ra \H'$. Since all of the steps in the construction are canonical and commute with 
smooth base change we conclude that $f$ descends to a morphism 
$\H \ra \H'$, as required.
\end{proof}

\subsection*{Local structure of reduction morphisms}  We focus on the case
of $f\co \H_{n}[k,\ell] \ra \H_{n}[k+1,\ell]$ and $g\co\H_{n}[k,\ell] \ra \H_{n}[k,\ell+1]$, the remaining cases being analogous.   

Note that $f \co \H_{n}[k,\ell]\ra \H_{n}[k+1,\ell]$ contracts the divisor defined as the closure of the locus of reducible quasi-admissible covers 
$(\varphi\co \X\ra \Y; \tau, \chi, D)$ such that $Y$ has a rational component $R$ meeting the rest of $Y$ in a single point and such that 
$\deg D\vert_{R}=k+2$. If $k$ is odd, the component of $X$ lying over $R$ is a hyperelliptic curve of genus $(k+1)/2$ meeting the rest
of $X$ in two conjugate points; we say that the cover has a {\em tail} of genus $(k+1)/2$. If $k$ is even, 
the component of $X$ lying over $R$ is a hyperelliptic curve of genus $k/2$ meeting the rest
of $X$ in a single Weierstrass point; we say that the cover has a {\em bridge} of genus $k/2$. Evidently, there is an isomorphism 
\[
\Exc(f) \simeq \H_{k+1}[k, \ell].
\]

The morphism $g \co \H_{n}[k,\ell]\ra \H_{n}[k,\ell+1]$ contracts the divisor defined as the closure of the locus of reducible quasi-admissible covers
$(\varphi\co \X\ra \Y; \tau, \chi, D)$ such that $Y$ has a rational component $R$ meeting the rest of $Y$ in a single point and such that 
$\deg D\vert_{R}=\ell+1$ and $\varphi(\chi)\in R$.
 If $\ell$ is odd, the component of $X$ lying over $R$ is a hyperelliptic curve of genus 
$(\ell-1)/2$ meeting the rest
of $X$ in two conjugate points; we say that the cover has a {\em pointed bridge} of genus $(\ell-1)/2$. If $\ell$ is even, 
the component of $X$ lying over $R$ is a hyperelliptic curve of genus $\ell/2$ meeting the rest
of $X$ in a single Weierstrass point; we say that the cover has a {\em pointed tail} of genus $\ell/2$. Clearly, there is an isomorphism 
\[
\Exc(g) \simeq \H_{\ell+1}[k, \ell].
\]

Using functorial interpretation of morphisms $f$ and $g$ given above, a routine computation with explicit test families gives the following result.
\begin{lemma}\label{L:discrepancy} We have
\begin{multline*}
K_{\H_{n}[k,\ell]}+(\alpha+1/2)\delta_\irr+\delta_{\red}+(2\alpha+2\beta-1)\delta_W \\ -f^*(K_{\H_{n}[k+1, \ell]}+(\alpha+1/2)\delta_\irr+\delta_{\red}+(2\alpha+2\beta-1)\delta_W) =(1-(k+2)\alpha)\Exc(f)
\end{multline*}
and 
\begin{multline*}
K_{\H_{n}[k,\ell]}+(\alpha+1/2)\delta_\irr+\delta_{\red}+(2\alpha+2\beta-1)\delta_W \\ -g^*(K_{\H_{n}[k, \ell+1]}+(\alpha+1/2)\delta_\irr+\delta_{\red}+
(2\alpha+2\beta-1)\delta_W)=(1-(\ell+1)\alpha-\beta)\Exc(g), 
\end{multline*}
where $\Exc(f)$ and $\Exc(g)$ are effective (multiples of) 
divisors contracted by $f$ and $g$.
\end{lemma}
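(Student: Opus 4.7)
The plan is to use that $f\co\H_n[k,\ell]\to\H_n[k+1,\ell]$ and $g\co\H_n[k,\ell]\to\H_n[k,\ell+1]$ are divisorial contractions between smooth proper Deligne-Mumford stacks (Theorems \ref{T:DM} and \ref{T:H-smooth-stack}). Consequently, the discrepancy divisor $K_{\H_n[k,\ell]}+B-f^{*}(K_{\H_n[k+1,\ell]}+B)$, where $B=(\alpha+1/2)\delta_\irr+\delta_\red+(2\alpha+2\beta-1)\delta_W$, is automatically supported on $\Exc(f)$ and equals $c\cdot\Exc(f)$ for a single rational number $c$. To pin down $c$ I would intersect both sides with a one-parameter test curve $T\subset\Exc(f)$ contracted by $f$, so that $f^{*}(\cdot)\cdot T=0$; the lemma then reduces to two numerical intersection computations. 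The same scheme handles $g$.

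For the test curve I would exploit the local description in the paragraphs immediately preceding the lemma: a general fiber of $f|_{\Exc(f)}$ parameterizes covers of a sprouted rational tail $R$ with $k+2$ branch points, the cover of $\Y\smallsetminus R$ being held fixed. Concretely, I would fix a generic cover in $f(\Exc(f))$ together with the node to which the $A_{k+1}$ resolution is attached, and let one of the $k+2$ branch points on $R\simeq\PP^1$ sweep out a pencil. By construction this $T$ is pulled back from a small weighted Hassett space, and its image under the branch morphism $\brr$ is a curve in $R_\W$ whose intersections with $\psi$, $\Delta_s$, $\Delta_\ev$, $\Delta_\odd$, and $\Delta_{\sigma\cap\chi}$ are standard. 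Substituting the expressions of Propositions \ref{P:canonical-class-1} and \ref{P:canonical-class-2} into $(\text{LHS})\cdot T$ and $\Exc(f)\cdot T$ reduces the computation to elementary arithmetic; the answer collapses to $1-(k+2)\alpha$ for $f$ and to $1-(\ell+1)\alpha-\beta$ for $g$, which is consistent with $\alpha=1/(k+2)$ and $\beta=1-(\ell+1)\alpha$ being precisely the log canonical thresholds at which the exceptional divisor becomes numerically trivial relative to the log pair.

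The hard part is the bookkeeping of two independent ramification contributions when comparing divisor classes on $\H_n[k,\ell]$, $\Mdiv{\W}$, and $\Mg{0,\A}$. First, $\brr$ is simply ramified over $\q(\Delta_\odd)$ and, in the pointed case, also over $\q(\Delta_{\sigma\cap\chi})$ by Lemma \ref{L:ramification}, contributing the $\tfrac12$-corrections built into the canonical class formulas; second, $\q\co\Mg{0,\A}\to R_\W$ is simply ramified along $\Delta_s$. Losing any of these factors of $\tfrac12$ would throw off the coefficients $(1-(k+2)\alpha)$ and $(1-(\ell+1)\alpha-\beta)$ and destroy the match with the log canonical thresholds of $A_{k+1}$ and $D_{\ell+1}$ recorded in Section \ref{S:lct}; it is precisely this match that underwrites the log-MMP interpretation of the sequence $\H_n[1,1]\to\H_n[k,\ell]\to\cdots$ used in Theorems \ref{T:logMMP-A} and \ref{T:logMMP-D}, so the computation has a strong internal consistency check.
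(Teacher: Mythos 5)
Your proposal is correct and follows the same route the paper takes: the paper's proof is the one-line remark that ``a routine computation with explicit test families'' using the functorial description of $f$ and $g$ gives the result, and your plan --- intersecting the discrepancy with a test curve contracted by $f$ (so that $f^{*}(\cdot)\cdot T=0$) and evaluating via the branch morphism, the canonical class formulas of Propositions \ref{P:canonical-class-1} and \ref{P:canonical-class-2}, and the ramification corrections of Lemma \ref{L:ramification} --- is precisely that computation spelled out. The internal consistency check against the log canonical thresholds is a nice confirmation that the coefficients $1-(k+2)\alpha$ and $1-(\ell+1)\alpha-\beta$ are right.
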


\section{Deformations of $D$ singularities}\label{S:D-singularities}

In this section, we explain how to view pointed quasi-admissible covers of Definition \ref{D:stack-H1}
as curves with $D$ singularities. More precisely, 
we establish the equivalence between deformations of an $A_{n-1}$ singularity with a 
section and a $D_n$ singularity. 


To begin, let $X=\spec \mathbb{K}[[x,y]]/(y^2-x^{n})$ be the $A_{n-1}$ singularity 
and $s=(0,0)\in X$ be a section. Consider the local deformation functor $\Def(X,s)$ which sends an Artinian $\mathbb{K}$\nb-algebra $A$ with the residue field 
$\mathbb{K}$ to the set of isomorphism classes of Cartesian diagrams 
\begin{align*}
\xymatrix{
X \ar[d] \ar[r] & \X \ar[d]\\
\spec \mathbb{K} \ar@/_/[u]_s \ar[r] & \spec A \ar@/_/[u]_\Sigma
}
\end{align*}
We have the following result:
\begin{prop}\label{P:A=D} 
The functor $\Def(X,s)$ is naturally isomorphic to $\Def(D_{n})$,
the local deformation functor of the $D_{n}$ singularity 
$\spec \mathbb{K}[[x,u]]/(x(u^2-x^{n-2}))$.
\end{prop}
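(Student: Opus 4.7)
The plan is to exhibit mutually inverse natural transformations between the functors $\Def(X,s)$ and $\Def(D_n)$ via an explicit parameter-level identification of their versal families, using the complete-the-square substitution applied to the $D_n$ equation recorded in Section \ref{S:curves-definitions}.

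First, I would identify $\Def(X,s)$ as prorepresented by an $n$-dimensional smooth formal scheme. The section $\Sigma$ forces one point of $\X$ to specialize to the singular point $s$, which rigidifies the translation $x\mapsto x-x_0$ normally used to eliminate the $x^{n-1}$ coefficient from $\Def(A_{n-1})$. Imposing that $\Sigma$ lies over $\{x=0\}$ and writing $c$ for its $y$\nb-coordinate, the constraint $y_0^2=f(x_0)$ becomes $c^2=f(0)$, producing the normal form
\[
y^2 = x^n+a_{n-1}x^{n-1}+\cdots+a_1 x+c^2,\qquad \Sigma=(0,c),
\]
with parameter space $\spec \KK[[c, a_1,\dots,a_{n-1}]]$ of dimension $n$, matching $\dim\Def(D_n)$.

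Next, I would construct the natural transformation $\Phi\co \Def(D_n)\to \Def(X,s)$. Beginning with the $D_n$ versal family $xu^2+bu-g(x)=0$, where $g(x)=x^{n-1}+a_{n-2}x^{n-2}+\cdots+a_0$, I multiply by $x$ and complete the square to obtain $(xu+b/2)^2=b^2/4+xg(x)$. Setting $y:=xu+b/2$ yields $y^2=x^n+a_{n-2}x^{n-1}+\cdots+a_0 x+b^2/4$, an $A_{n-1}$ deformation equipped with the section $(x,y)=(0,b/2)$; since $b\in \mathfrak{m}_A$, this specializes correctly to $s=(0,0)$. The inverse $\Psi\co \Def(X,s)\to \Def(D_n)$ reverses the substitution: from the normal form above, set $b=2c$, factor $y^2-c^2=x(x^{n-1}+a_{n-1}x^{n-2}+\cdots+a_1)$, and introduce $u:=(y-c)/x$, which satisfies $xu^2+bu=x^{n-1}+a_{n-1}x^{n-2}+\cdots+a_1$, i.e.\ the $D_n$ equation whose parameters $(b,a_0,\dots,a_{n-2})$ equal $(2c,a_1,\dots,a_{n-1})$. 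At the level of parameters, $\Phi$ and $\Psi$ are manifestly inverse bijections.

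The main obstacle will be verifying that $\Psi$ descends to a well-defined natural transformation of functors, not merely a map of versal parameter spaces. Given an abstract deformation $(\tilde{\X},\Sigma)$ of $(X,s)$ over an Artinian base $A$, I must show that the translation placing $\Sigma$ over $\{x=0\}$ depends only on the isomorphism class of $(\tilde{\X},\Sigma)$, and that distinct choices of normalization yield isomorphic $D_n$ deformations. This reduces to the assertion that any two such translations differ by a residual symmetry of the $A_{n-1}$ versal family fixing the section, which under the parameter bijection intertwines with the residual symmetry of $\Def(D_n)$ recorded via Equations \eqref{E:A-action}--\eqref{E:D-action}. Together with the prorepresentability of $\Def(D_n)$ as an isolated hypersurface singularity, this upgrades $\Phi$ and $\Psi$ to the desired equivalence of deformation functors $\Def(X,s)\simeq\Def(D_n)$.
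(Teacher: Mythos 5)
Your proposal is correct and is essentially the paper's proof in different notation: the paper likewise identifies the miniversal deformation of $(X,s)$ --- there in the form $y^2-by-x(x^{n-1}+a_{n-2}x^{n-2}+\cdots+a_0)=0$ with section $\{x=y=0\}$, obtained from the basis $y,x,\dots,x^{n-1}$ of $\m/(f,\m\cdot(\partial f/\partial x,\partial f/\partial y))$, which matches your completed-square normal form under $y\mapsto y+b/2$ --- and then performs the substitution $y-b=xu$, i.e.\ the blow-up of the conjugate section $\Sigma'$, which is exactly your map $\Psi$ up to the sign of $b$. The naturality concern in your last paragraph is legitimate but is resolved most cleanly by the paper's geometric phrasing (blowing up the section is a coordinate-free operation on the family), rather than by tracking residual symmetries of normal forms.
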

\begin{proof}
To prove the statement, it suffices to establish an isomorphism between versal 
deformation spaces of $\Def(X,s)$ and $\Def(D_{n})$. To begin, 
a miniversal deformation space of an isolated planar singularity 
$\mathbb{K}[x,y]/(f(x,y))$ with a section $(x,y)=(0,0)$ can be taken to be the $\mathbb{K}$-vector space 
\[
\m/(f, \m\cdot(\partial f/\partial x, \partial f/\partial y)),
\]
where $\m=(x,y)\subset \mathbb{K}[x,y]/(f(x,y))$ is the maximal ideal 
(see, e.g., 
\cite[Section 2]{khirsch-martin} or \cite[Lemma 2.1]{mond-straten}).
 Moreover, if monomials $x^iy^j$
 form a basis of the 
said $\mathbb{K}$-vector space, then 
\[
\spec \mathbb{K}[x,y,\{t_{ij}\}]/(f(x,y)-\sum t_{ij}x^iy^j) \ra \spec \mathbb{K}[\{t_{ij}\}]
\] 
is the miniversal deformation with the universal section $\Sigma: \{x=y=0\}$. 
Applying this to $f(x,y)=y^2-x^n$, we obtain that 
$T:=\mathbb{K}[b,a_{0},\dots, a_{n-2}]$ is the base of the miniversal deformation of $\Def(X,s)$ 
and 
\[
\X:=\{y^2-by-x^{n}-a_{n-2}x^{n-1}-\dots-a_0x=0\}\subset \AA^2_{x,y}\times T
\] 
is the miniversal family. 

Next, let $\Sigma:\{x=0, y=0\}$ be the universal section, and $\Sigma':\{x=0, y-b=0\}$ be the conjugate section.
Since $\Sigma'$ is not a Cartier divisor on the total family $\X$, we can blow-up $\Sigma'$ to obtain
a new family
\[
\Y:=\Bl_{\Sigma'} \X
\] of plane curves over $T$. By construction, we can regard $\Y$ as a subvariety of $\PP^1_{[U:V]}\times \AA^2_{x,y} \times T$. We proceed to describe it by explicit 
equations: In the locus where 
$V\neq 0$, we have $y-b=xU/V$ and the equation of $\Y$ in terms of $x$ and $u:=U/V$ is 
\begin{align}\label{E:versal-D}
xu^2+ub-(x^{n-1}+a_{n-2}x^{n-2}+\dots+a_0)=0.
\end{align}
By the discussion in Section \ref{S:curves-definitions}, 
Equation \eqref{E:versal-D} defines precisely the miniversal deformation 
of the $D_n$ singularity $x(u^2-x^{n-2})=0$. 
In particular, the central fiber of $\Y \ra T$ has a unique singularity of type $D_{n}$, 
and the family $\Y \ra T$ is its versal deformation. This finishes 
the proof of the proposition.

We note that, more generally,
the blow-up $\Y \ra \X$ replaces a fiber of $\X\ra T$ in which 
the section $\Sigma$ coincides with an 
$A_{k-1}$ singularity by a fiber of $\Y\ra T$ with a $D_k$ singularity. This effect on
fibers is illustrated in Figure \ref{F:replacement}.
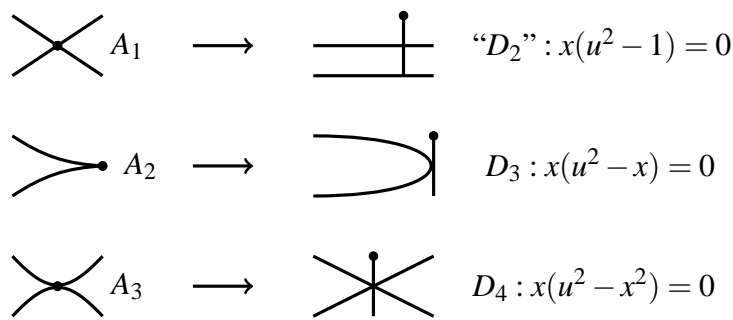
\begin{figure}[htb]
\begin{centering}
\begin{tikzpicture}[scale=0.8]
	\node  (0) at (-6.00, 4.00) {};
	\node  (1) at (-4.50, 4.00) {};
	\node  (2) at (0.50, 4.00) {};
	\node  (3) at (-5.25, 3.50) [label=right:$\quad A_1$] {};
	\draw[fill] (3) circle (2pt);
	\node  (d2) at (0.75, 3.50) [label=right:$\ensuremath{\quad ``D_2": x(u^2-1)=0}$] {};
	\draw[fill] (3) circle (2pt);
	\node  (4) at (-3.00, 3.50) {};
	\node  (5) at (-2.00, 3.50) {};
	\node  (6) at (-1.00, 3.50) {};
	\node  (7) at (1.00, 3.50) {};
	\node  (8) at (-6.00, 3.00) {};
	\node  (9) at (-4.50, 3.00) {};
	\node  (10) at (-1.00, 3.00) {};
	\node  (11) at (0.50, 3.00) {};
	\node  (12) at (1.00, 3.00) {};
	\node  (13) at (-6.00, 2.00) {};
	\node  (14) at (-1.00, 2.00) {};
	\node  (15) at (1.00, 2.00) {};
	\node  (16) at (-4.50, 1.50) [label=right:$A_2$] {};
	\draw[fill] (16) circle (2pt);
	\node  (d3) at (1.50, 1.50) [label=right:$\ensuremath{D_3: x(u^2-x)=0}$] {};
	\draw[fill] (16) circle (2pt);
	\node  (17) at (-3.00, 1.50) {};
	\node  (18) at (-2.00, 1.50) {};
	\node  (19) at (-2.00, 1.50) {};
	\node  (20) at (-2.00, 1.50) {};
	\node  (e1) at (0.50, 4.00) {};
	\draw[fill] (e1) circle (2pt);
	\node  (e2) at (1.00, 2.00) {};
	\draw[fill] (e2) circle (2pt);
	\node  (e3) at (0, 0) {};
	\draw[fill] (e3) circle (2pt);
	\node  (21) at (-6.00, 1.00) {};
	\node  (22) at (-1.00, 1.00) {};
	\node  (23) at (1.00, 1.00) {};
	\node  (24) at (-6.00, 0.00) {};
	\node  (25) at (-4.50, 0.00) {};
	\node  (26) at (-1.00, 0.00) {};
	\node  (27) at (0.00, 0.00) {};
	\node  (28) at (1.00, 0.00) {};
	\node  (29) at (-5.25, -0.50) [label=right:$\quad A_3$] {};
	\draw[fill] (29) circle (2pt);
	\node  (d4) at (0.75, -0.50) [label=right:$\ensuremath{\quad D_4: x(u^2-x^2)=0}$] {};
	\draw[fill] (29) circle (2pt);
	\node  (30) at (-3.00, -0.50) {};
	\node  (31) at (-2.00, -0.50) {};
	\node  (32) at (-6.00, -1.00) {};
	\node  (33) at (-4.50, -1.00) {};
	\node  (34) at (-1.00, -1.00) {};
	\node  (35) at (0.00, -1.00) {};
	\node  (36) at (1.00, -1.00) {};
	\draw  [very thick] (8.center) to (1.center);
	\draw [very thick, bend left=47, looseness=1.50] (32.center) to (33.center);
	\draw  [very thick] (6.center) to (7.center);
	\draw [very thick] (34.center) to (28.center);
	\draw [very thick, bend right=47, looseness=1.50] (24.center) to (25.center);
	\draw  [very thick, ->]  (30.center) to (31.center);
	\draw  [very thick] (2.center) to (11.center);
	\draw [very thick, bend right=15] (13.center) to (16.center);
	\draw  [very thick]  (10.center) to (12.center);
	\draw  [very thick]  (35.center) to (27.center);
	\draw  [very thick, ->]  (17.center) to (19.center);
	\draw  [very thick]  (15.center) to (23.center);
	\draw [very thick]  (0.center) to (9.center);
	\draw [very thick, bend right=90, looseness=6.7] (22.center) to (14.center);
	\draw   [very thick] (26.center) to (36.center);
	\draw [very thick, bend right=15] (16.center) to (21.center);
	\draw [very thick, ->] (4.center) to (5.center);
\end{tikzpicture}
\end{centering}
\caption{Replacing an $A_{k-1}$ singularity with a section by a 
$D_k$ singularity.}\label{F:replacement}
\end{figure}
\end{proof}

\section{$(A,D)$-stable reduction} 
\label{S:final}

Given a proper integral curve $C$ of arithmetic genus $g \gg 0$ with a single isolated smoothable 
singularity $p$, one has a rational {\em moduli map} 
$
j\co \Def(C) \dashrightarrow \Mg{g}
$. 
The problem of resolving the indeterminacy of $j$ plays an important 
role in the study of alternate compactifications of $\mathcal{M}_{g}$ (cf. Section
\ref{S:intro}). 
Recently, Casalaina-Martin and Laza have described an explicit alteration 
sufficient to regularize the 
moduli map in the 
case when $\hat{\O}_{C,p}$ is an ADE singularity \cite[Main Theorem]{radu-yano}. We briefly describe 
their approach: 

First, one passes to the Weyl cover of $\Def(C)$ (see \cite[Section 2]{radu-yano} and references therein) 
-- a finite base extension after which the discriminant divisor $\Delta_{C}\subset \Def(C)$ becomes an 
arrangement of hyperplanes.
After the wonderful blow-up of \cite{wonderful} the discriminant becomes a simple normal crossing divisor.  An application of the extension theorem of de Jong-Oort 
\cite[Theorem 5.1]{dejong-oort} (see also \cite[Theorem 1.2]{sabin}) 
now gives a morphism to the coarse moduli space $\M_g$. Finally, one verifies that the family of stable curves away from the discriminant extends to a family over the generic point of every irreducible component of the discriminant, except for the components corresponding to $A_{2k}$
singularities.  A further finite base change is required to obtain the extension of the morphism to the moduli stack $\Mg{g}$. We refer to \cite{radu-yano}
 for more details.

An application of our Main Theorem \ref{main2} is
an iterative functorial resolution of the indeterminacy of 
the moduli map $j\co \Def(C) \dashrightarrow \Mg{g}$
in the case when $C$ has only $A$ and $D$ singularities. 
\begin{theorem}[$(A_{k},D_{\ell})$-stable reduction]
\label{T:resolution} 
Suppose $\C\ra \Def(D_{n})$ is the 
miniversal deformation ($n\geq 4$). Then for integers $k$ and $\ell$ 
such that 
$\ell\leq \min\{k+1,n\}$, there  
is a representable by proper Deligne-Mumford stacks 
morphism $f_{k,\ell}\co T \ra \Def(D_{n})$, which is an isomorphism away
from $\Delta\subset \Def(D_{n})$, such that 
$\C\vert_{\Def(D_{n})\smallsetminus \Delta}$ extends to the family of curves
with at worst $A_{k}$ and $D_{\ell}$ singularities over $T$. 
\end{theorem}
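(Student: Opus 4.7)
The plan is to construct $T$ by pulling back an iterated reduction morphism from $\H_{n}[n-1,n-1]$ to $\Def(D_n)$, using the identification of Corollary \ref{C:pinkham-blowup}. The case $\ell=n$ is vacuous (take $T=\Def(D_n)$ itself), so assume $\ell\leq n-1$.

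First, I would iterate Theorem \ref{T:reduction-image} to produce the reduction morphism $\pi\co \H_{n}[k,\ell]\ra \H_{n}[n-1,n-1]$, a proper birational morphism of smooth proper Deligne-Mumford stacks which restricts to an isomorphism over the open locus of smooth hyperelliptic covers (any smooth cover is $\W$-stable for every weight vector of the relevant form). Via the identification $\H_{n}[n-1,n-1]\simeq [\Def(D_n)\smallsetminus\mathbf{0}/\GG_m]$ of Corollary \ref{C:pinkham-blowup}, I would pull $\pi$ back along the $\GG_m$-torsor $q\co \Def(D_n)\smallsetminus\mathbf{0}\ra \H_{n}[n-1,n-1]$ to obtain a proper birational morphism
\[
\widetilde{f}\co T^\circ:=\H_n[k,\ell]\times_{\H_n[n-1,n-1]}(\Def(D_n)\smallsetminus\mathbf{0})\ra \Def(D_n)\smallsetminus\mathbf{0},
\]
which is an isomorphism over $\Def(D_n)\smallsetminus\Delta$.

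Next, I would extend $T^\circ$ across the origin by taking $T$ to be the scheme-theoretic closure of the graph embedding $T^\circ\hookrightarrow(\Def(D_n)\smallsetminus\mathbf{0})\times\H_n[k,\ell]$ inside $\Def(D_n)\times\H_n[k,\ell]$. The induced projection $f_{k,\ell}\co T\ra \Def(D_n)$ is then representable by proper Deligne-Mumford stacks, being the restriction of the second projection from $\Def(D_n)\times\H_n[k,\ell]$, which is representable and proper since $\H_n[k,\ell]$ is a proper DM stack by Theorem \ref{T:DM}. By construction, $f_{k,\ell}$ is an isomorphism over $\Def(D_n)\smallsetminus\Delta$. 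The complementary projection $T\ra \H_n[k,\ell]$ classifies a pullback of the universal family of quasi-admissible covers, yielding a family over $T$ with at worst $A_k,D_\ell$ singularities that agrees with $\C\vert_{\Def(D_n)\smallsetminus\Delta}$ on the smooth locus, since smooth fibres are uniquely determined by the moduli map.

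The main obstacle I expect is controlling the behaviour of $T$ over the fixed point $\mathbf{0}\in\Def(D_n)$, where the $\GG_m$-orbits collapse. Properness and representability of $f_{k,\ell}$ follow formally from the closure construction, but verifying that $T$ is smooth---so that the pulled-back family is a genuine flat family of curves with the asserted singularities rather than something pathological---should require identifying the extension of $T^\circ$ across the origin with the $\GG_m$-equivariant iterated weighted blowups of $\Def(D_n)$ implicit in the lattice of divisorial contractions of Main Theorem \ref{main2}(3).
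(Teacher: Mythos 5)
Your construction only sees the punctured deformation space, and the step that is supposed to get you across the origin does not work. The identification $\H_n[n-1,n-1]\simeq\left[\Def(D_n)\smallsetminus\mathbf{0}\ /\ \GG_m\right]$ omits precisely the point $\mathbf{0}$ corresponding to the $D_n$ singularity itself, and your proposed fix --- taking the closure of $T^\circ$ inside $\Def(D_n)\times\H_n[k,\ell]$ --- degenerates there. Since all weights of the $\GG_m$-action \eqref{E:D-action} are positive, every orbit $\GG_m\cdot v$ has $\mathbf{0}$ in its closure; so for \emph{any} $h'\in\H_n[k,\ell]$ you may approach $\mathbf{0}$ along the orbit of a point $v$ with $q(v)=\pi(h')$ while holding $h'$ fixed, and $(\mathbf{0},h')$ lies in the closure. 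Hence the fiber of your $T$ over the origin is all of $\H_n[k,\ell]$, an $(n-1)$-dimensional fiber of an irreducible $n$-dimensional $T$. This is not a stable reduction in any meaningful sense: the "limit" over the central fiber is every curve in the moduli stack at once, and you have no control of flatness, smoothness, or the restriction of the family there. You correctly identified this as the main obstacle, but the proposed remedy (matching the closure with the iterated weighted blowups from Main Theorem \ref{main2}(3)) is not carried out and is substantially harder than the problem it replaces. There is also a smaller technical slip: $T^\circ\ra(\Def(D_n)\smallsetminus\mathbf{0})\times\H_n[k,\ell]$ is a fiber product over a \emph{stack}, hence finite unramified but not a closed immersion, so "closure of the graph embedding" needs to be replaced by the closure of the image; and the claim that the pulled-back family agrees with $\C$ over $\Def(D_n)\smallsetminus\Delta$ "because smooth fibres are determined by the moduli map" conflates fiberwise isomorphism with isomorphism of families.

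The paper avoids all of this by adding one branch point: it works in $\H_{n+1}[n,n]$, where the cover with an honest $D_n$ singularity is an actual point of a smooth proper Deligne--Mumford stack. Proposition \ref{P:versality2} then gives an \'etale neighborhood $U$ of that point isomorphic to $\Def(D_n)$ --- \emph{including} the origin --- over which the universal family restricts to the miniversal deformation and $U\cap\delta=\Delta$. The requisite $T$ is simply $f^{-1}(U)$ for the reduction morphism $f\co\H_{n+1}[k,\ell]\ra\H_{n+1}[n,n]$ of Theorem \ref{T:reduction-image}, and representability, properness, smoothness, and the extension of the family all come for free from Theorems \ref{T:DM} and \ref{T:H-smooth-stack}. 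If you want to salvage your approach, the missing idea is exactly this versality argument: you must realize $\Def(D_n)$ with its origin inside a proper moduli stack of quasi-admissible covers, which forces the ambient branch degree up to $n+1$.
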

\begin{proof}
To begin, 
consider the moduli stack 
$\H_{n+1}[n,n]$ with the universal quasi-admissible cover 
$\varphi\co \X \ra \Y$. 
We restrict to a neighborhood of the distinguished point in
$\H_{n+1}[n,n]$ corresponding to the unique 
quasi-admissible cover with a $D_{n}$ singularity (cf. Proposition \ref{P:A=D}). 
By Proposition \ref{P:versality2} , there is an \'{e}tale
neighborhood $U$ of this point that is isomorphic to $\Def(D_n)$. 
Moreover, the restriction of $\X\ra \H_{n+1}[n,n]$ to $U$ is the miniversal deformation of $D_n$, and if $\delta:=\delta_{\irr}
\cup \delta_{\red}\cup \delta_{W}\subset \H_{n+1}[n,n]$, then 
$U\cap \delta=\Delta\subset \Def(D_{n})$.

Consider now the reduction morphism 
$f\co\H_{n+1}[k,\ell] \ra \H_{n+1}[n,n]$. 
By construction, $f$ is an isomorphism over 
$\H_{n+1}[n,n] \smallsetminus \delta$ and the universal cover over 
$\H_{n+1}[k,\ell]$
agrees with $\X$ over $\H_{n+1}[n,n] \smallsetminus \delta$. 
By definition, 
the universal cover over $\H_{n+1}[k,\ell]$ is a family 
of 
quasi-admissible covers with at worst $A_k$ and $D_{\ell}$ 
singularities. Passing to the relative coarse 
moduli space over $\H_{n+1}[k,\ell]$
(this is possible because the characteristic is $0$), 
we obtain a family of curves with at worst $A_k$ and $D_{\ell}$ singularities
extending the 
miniversal family over $\Def(D_{n})\smallsetminus \Delta$. 
It follows that 
\[
f_{k,\ell}:=f\vert_{f^{-1}(U)} \co f^{-1}(U)\ra U\simeq \Def(D_{n}),
\]
is a requisite morphism. 
\end{proof}
We note that in practice one desires an explicit blow-up procedure that, for an arbitrary family of 
curves, allows one to replace $A_k$ singularities in the fibers 
by hyperelliptic tails with at worst $A_{k-1}$ singularities, 
and to replace $D_\ell$ singularities in the fibers by hyperelliptic tails with at worst $A_{\ell-1}$ and $D_{\ell-1}$ 
 singularities.
Proposition \ref{P:A=D} (and explicit blow-ups presented in its proof) 
reduces the problem to that for $A$ singularities only and 
 the following proposition (essentially generalizing \cite[Section 5.1]{severi} to the case of all $k$)
 is an illustration of Theorem \ref{T:resolution} for the $A$ case. 
\begin{prop}\label{P:explicit-A-stable-reduction}
For a miniversal family $\C\ra T$ of an $A_{k}$-singularity ($y^{2}=x^{k+1}$)
there is an alteration $f\co T'\ra T$ and a weighted 
blow-up $\C' \ra \C\times_{T}T'$ of the $A_{k}$-locus in the fibers such that
$\C'\ra T'$ is a flat family of curves with at worst $A_{k-1}$ singularities and
such that $\C'\vert_{f^{-1}(0)}\simeq \Y_1\cup \Y_2$ is a union of two irreducible components such that
$\Y_1\ra f^{-1}(0)$ is an isotrivial family of normalizations of the central fiber $\C_0$ 
and $\Y_2\ra f^{-1}(0)$ is 
a 
family of curves in $\H_k[k-1]$.
\end{prop}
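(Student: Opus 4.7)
The plan is to exploit the $\GG_m$-homogeneity of the miniversal equation $y^2-x^{k+1}-\sum a_ix^i$, which is homogeneous of degree $2w_y=(k+1)w_x=w_0$ under the action \eqref{E:A-action}. I would define the alteration $f\co T'\to T$ to be the stacky weighted blow-up of $T=\Def(A_k)$ at the origin with weights $(w_0,\ldots,w_{k-1})$; by Corollary \ref{C:pinkham-blowup}, $f$ is proper birational, and its exceptional divisor $f^{-1}(0)\subset T'$ is canonically identified with $\H_k[k-1]$. In parallel, I would take the stacky weighted blow-up of $\AA^2\times T$ at the origin $(x,y,a_0,\ldots,a_{k-1})=\mathbf{0}$ with weights $(w_x,w_y,w_0,\ldots,w_{k-1})$; the resulting proper transform of $\C$ admits a natural morphism to $T'$, and this is $\C'$.

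The verification is chart by chart. The key computation is in the chart dehomogenizing by $a_0$: the substitutions $a_0=u^{w_0}$, $a_i=u^{w_i}b_i$, $x=u^{w_x}X$, $y=u^{w_y}Y$ convert the defining equation, after dividing out the common factor $u^{w_0}$, to
\[
Y^2=X^{k+1}+b_{k-1}X^{k-1}+\cdots+b_1X+1.
\]
This is precisely the defining equation of the universal affine hyperelliptic cover over an open chart of $\H_k[k-1]\cong\P(w_0,\ldots,w_{k-1})$ (cf.\ Example \ref{E:weighted-projective-spaces}); restriction to $u=0$ recovers the component $\Y_2\to f^{-1}(0)$. In a complementary chart (dehomogenizing by $x$ or $y$), the analogous substitution reveals the strict transform of the central fiber $\C_0$ as an affine curve isomorphic to its normalization $\widetilde{\C}_0$, constant in the $T'$-directions; this supplies the isotrivial family $\Y_1\to f^{-1}(0)$. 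The two components meet at a single Weierstrass point for $k$ even, or two conjugate points for $k$ odd, matching the local structure of the reduction $\H_k[k-2]\to\H_k[k-1]$ of Section \ref{S:reduction}.

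Flatness of $\C'\to T'$ is automatic since $\C'$ is cut out in each chart by a single non-trivial equation. The singularity bound follows because in every chart the fiberwise equation has the form $Y^2=F(X)$ with $\deg F\leq k+1$, and a fiberwise $A_j$ singularity corresponds to a root of $F$ of multiplicity $j+1$; a multiplicity-$(k+1)$ root would force the $A_k$-locus to survive in the proper transform, contradicting the construction.

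The main technical obstacle is the uniform chart analysis for the stacky weighted blow-up, in particular tracking the $\mu_{w_i}$-quotient (root-stack) structures on the charts of $T'$ and their compatibility with the proper transform $\C'$. Handling the odd-versus-even parity of $k$---which changes both the $\GG_m$-weights and the generic stabilizer of $\H_k[k-1]$---requires parallel but slightly different substitutions, though the qualitative picture is identical in both cases.
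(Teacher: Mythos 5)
Your core computation is the same as the paper's: the weighted blow-up of the total space with $\text{weight}(x,y)=(w_x,w_y)$ over the exceptional locus of a weighted blow-up of the base, the chart equation $Y^2=X^{k+1}+b_{k-1}X^{k-1}+\cdots+1$ (the paper's chart equation $y^{2}=x^{k+1}+c^2_{k-1}u^{2}x^{k-1}+\cdots+c^{k+1}_{0}u^{k+1}$ is exactly yours before clearing $u^{w_0}$), the identification of $\Y_1$ as the constant family of normalizations and of $\Y_2$ as a family of double covers with no branch point of multiplicity $k+1$, and the parity analysis of the attaching locus. All of that is right.

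The genuine gap is in your construction of $T'$. The proposition asks for an \emph{alteration} $f\co T'\ra T$, and the family $\Y_2$ must be an honest family of quasi-admissible covers over (an open cover of) $f^{-1}(0)$ in order to define a morphism to $\H_k[k-1]$. Your $T'$ is the \emph{stacky} weighted blow-up, and the point you defer as ``the main technical obstacle'' --- tracking the $\mu_{w_i}$-quotient structure on the charts --- is precisely where the construction fails if one passes to coarse spaces: in the chart dehomogenizing by $a_0$, the functions $b_i$ with $u^{w_i}b_i=a_i$, $u^{w_0}=a_0$ are not regular on the scheme-theoretic chart $\AA^{k}/\mu_{w_0}$ (since $w_0\nmid w_i$ in general), so the equation $Y^2=X^{k+1}+b_{k-1}X^{k-1}+\cdots+1$ does not descend and you do not obtain a family over a scheme. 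The paper resolves exactly this by inserting the finite base change $a_i=b_i^{k+1-i}$ \emph{before} blowing up: this makes all coordinates have $\GG_m$-weight one, so an ordinary blow-up of the origin suffices, every chart $U_j=\spec \mathbb{K}[u,c_0,\dots,\hat{c_j},\dots,c_{k-1}]$ is an honest affine space carrying the family, and $f$ is manifestly an alteration (finite followed by proper birational). The induced map $\PP^{k-1}\ra \P(2,\dots,k+1)$ from the exceptional divisor to $\H_k[k-1]$ is then the finite cover that untwists the stack structure you were trying to avoid. So your route can be completed, but only by adding this finite cover (or an equivalent root-stack presentation), and that addition is the substantive content rather than a routine verification; as written, the proposal does not produce the alteration or the family over $f^{-1}(0)$ that the statement requires.
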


\begin{proof}
We can assume that $T\simeq \spec \mathbb{K}[a_{0}, a_{1}, \dots, a_{k-1}]$
and the miniversal family $\C$ is given by the equation 
\[
y^{2}=x^{k+1}+a_{k-1}x^{k-1}+\cdots+a_{1}x+a_{0}.
\] 
We begin with a finite base change 
$a_{i}=b_{i}^{k+1-i}$. Set $T'$ to be the blow-up of $\spec \mathbb{K}[b_{0}, \dots, b_{k-1}]$ along the ideal $(b_{0},\dots, b_{k-1})$ and denote 
the resulting morphism $T'\ra T$ by $f$. By construction, $f$ is 
a composition of a faithfully flat finite and a proper birational morphisms. 
Consider the customary affine cover $T'=\bigcup_{j=0}^{k-1} U_{j}$ where
$U_{j}=\spec \mathbb{K}[u, c_{0},\dots, \hat{c_{j}}, \dots, c_{k-1}]$. Then the morphism 
$U_{j}\ra T$ is given by
\[
b_i\mapsto uc_i \text{  \  for \  $i\neq j$ and}  \ b_j\mapsto u.
\]
Note that the exceptional divisor $E:=f^{-1}(0)$ of $f$ is defined by the equation
$u=0$ on $U_{j}$.
\par 
We claim that performing a weighted blow-up of $\C\times_{T}T'$ 
with $\text{weight}(x,y,u)=(2,k+1,2)$, or, 
to put it differently, taking
$\C':=\Bl_{\J}(\C\times_{T}T')$ where $\J=((\I_{E},x)^{(k+1)/2},y)$ if $k$ is odd, and 
$\J=((\I_{E},x)^{k+1},y^2, y(\I_{E},x)^{k/2+1})$ if $k$ is even, gives us a requisite family. 
To begin, 
denote the exceptional divisor of $\C'=\Bl_{\J}(\C\times_{T}T')\ra \C\times_{T}T'$ by $\Y_2$
and the strict transform of $\C_0\times f^{-1}(0)$ by $\Y_1$.
To check the assertion,  
we work over the affine patch 
$U_{j}=\spec \mathbb{K}[u, c_{0},\dots, \hat{c_{j}}, \dots, c_{k-1}]$ over which the equation 
of $\C\times_{T} T'$ is 
\[
y^{2}=x^{k+1}+c^2_{k-1}u^{2}x^{k-1}+\cdots+u^{j}x^{k+1-j}+\cdots+c^{k+1}_{0}u^{k+1}.
\]
 It is easy to see that $\Y_1\ra E\cap U_j$ is a trivial family whose fiber is the normalization of 
 the central fiber $\C_0$. 
Further, $\Y_2$ is a family of divisors in
the weighted projective space $\PP(2,2,k+1)$ given by the quasi-homogeneous (in variables $x,u,y$)
equation 
\begin{multline*}
\{y^{2}=x^{k+1}+c_{k-1}^{2}u^{2}x^{n-1}+\cdots+u^{j}x^{k+1-j}+\cdots+c_{0}^{k+1}u^{k+1} \}
\\ \subset \PP(2,2,k+1)\times (E\cap U_{j}).
\end{multline*}
In particular, $\Y_2$ admits a $2:1$ morphism to $\PP(2,2)\times (E\cap U_j)\simeq \PP^1\times (E\cap U_j)$ given by 
$(x,u,y)\mapsto (x,u)$. Since the ramification divisor has degree $k+1$ when $k$ is odd and $k+2$ 
when $k$ is even and has no points of multiplicity $k+1$, we conclude that $\Y_2\ra (E\cap U_j)$ is 
a family of curves in $\H_{k}[k-1]$. 
Moreover, $\Y_2$ is attached to $\Y_1$ along the locus $(u=0)\cap \Y_2$ 
in $\PP(2,2,k+1)\times (E\cap U_j)$. Since the equation $y^2=x^{k+1}$ defines two points in $\PP(2,k+1)$
when $k$ is odd and a single point when $k$ is even, we conclude that, for every $t\in f^{-1}(0)$,
the curve $(\Y_2)_t$ is attached 
$(\Y_1)_t$ along a ramification point when $k$ is even and along two conjugate points if $k$ is odd.
\end{proof}


\begin{remark}[Stable reduction over higher-dimensional bases] 
Note that given an irreducible scheme $T$ of any dimension 
and a stable curve over the generic 
point of $T$, there is always an alteration\footnote{An alteration is 
a composition of a proper birational morphism with a finite morphism
\cite{dejong-alterations}.} 
of $T$ after which (the pullback of) 
the stable curve over the generic fiber extends over the whole base.
Indeed, since $\Mg{g}$ is a proper stack over $\ZZ$ with a 
finite diagonal, by \cite[Theorem 2.7]{quotient-stacks} there exists a finite surjective 
morphism $V\ra \Mg{g}$, 
where $V$ a scheme. The rational
map $T\dashrightarrow \Mg{g}$ lifts to a rational map 
$T' \dashrightarrow V$, where $T'$ maps finitely to $T$. 
Since $V$ is proper, there exists 
a proper birational morphism $T''\ra T'$ after which this rational map 
extends. The composition $T''\ra T' \ra T$ is a requisite alteration.
\end{remark}


We now explain where the necessity in working with Deligne-Mumford stacks 
originates. This is accomplished by the following proposition.
\begin{prop}\label{P:nonexistence} There exists a curve $C$ with a unique isolated 
$A_{2k}$ singularity 
such that the indeterminacy of the moduli map 
$\Def(C)\dashrightarrow \Mg{p_{a}(C)}$ cannot be resolved 
by any proper birational modification of $\Def(C)$ which restricts to an isomorphism 
over $\Def(C)\smallsetminus \Delta_{C}$. Furthermore, the indeterminacy 
cannot be resolved by any proper modification of $\Def(C)$ which is isomorphic
to the Weyl cover of $\Def(C)$ over $\Def(C)\smallsetminus \Delta_{C}$.
\end{prop}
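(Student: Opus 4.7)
The strategy is to reduce to a concrete one-parameter smoothing of $C$ whose minimal base change for stable reduction exceeds what is provided by either a birational modification of $\Def(C)$ (contributing no base change) or the Weyl cover (contributing a cyclic base change of degree $2k+1$). Take $C$ to be any proper integral curve with a unique isolated singularity of type $A_{2k}$ at a point $p$, where $k\geq 1$ and $p_{a}(C)\geq 2$. Since $\Def(C)\to \Def(\hat{\O}_{C,p})$ is formally smooth, the one-parameter smoothing $y^{2}=x^{2k+1}+t$ of $\hat{\O}_{C,p}$ lifts to a morphism $\alpha\co \spec R\to \Def(C)$ with $R=\KK[[t]]$; the generic point of $\spec R$ lands in $\Def(C)\smallsetminus \Delta_{C}$ and the closed point lands at $0\in \Def(C)$. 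Let $\mathcal{F}\to \spec R$ be the pullback of the miniversal family; near $p$ its local equation is $y^{2}=x^{2k+1}+t$.

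The essential classical input is that $\mathcal{F}$ admits a stable reduction only after a base change $R\hookrightarrow R'$ whose ramification degree is divisible by $2(2k+1)$. That this degree suffices is seen via the substitution $x=v^{2}X$, $y=v^{2k+1}Y$, $t=v^{2(2k+1)}$, which yields the smooth genus-$k$ hyperelliptic fibre $Y^{2}=X^{2k+1}+1$, i.e., the stable tail attached at its unique Weierstrass point above $X=\infty$. That no smaller degree works follows from the fact that the Milnor monodromy of $A_{2k}$ on $H^{1}$ of the smooth fibre has order $\mathrm{lcm}(2,2k+1)=2(2k+1)$, while stable reduction is equivalent to unipotent (here necessarily trivial) monodromy. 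Alternatively and more directly: after the intermediate base change $t=-s^{2k+1}$ the equation becomes $y^{2}=\prod_{j=0}^{2k}(x-\zeta^{j}s)$ with $\zeta=e^{2\pi i/(2k+1)}$; substituting $x=sX$ yields $y^{2}=s^{2k+1}(X^{2k+1}-1)$, and since $(2k+1)/2\notin\ZZ$ there is no regular $Y=y/s^{(2k+1)/2}$, forcing the further degree-two base change $s=v^{2}$ whose geometric role is to realise the hyperelliptic involution of the tail.

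To prove assertion (1), assume for contradiction that $f\co T\to \Def(C)$ is a proper birational morphism, an isomorphism over $\Def(C)\smallsetminus \Delta_{C}$, such that the moduli map extends to $\tilde{\jmath}\co T\to \Mg{g}$, with associated family of stable curves $\mathcal{X}\to T$. The valuative criterion of properness for $f$ produces a unique lift $\tilde{\alpha}\co \spec R\to T$ with $\tilde{\alpha}(\eta)\in f^{-1}(\Def(C)\smallsetminus \Delta_{C})$, and the pullback $\tilde{\alpha}^{*}\mathcal{X}$ is then a stable reduction of $\mathcal{F}$ over $\spec R$ with no base change at all, contradicting the classical input.

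For assertion (2), assume $g\co T\to \Def(C)$ is a proper modification isomorphic to the Weyl cover over $\Def(C)\smallsetminus \Delta_{C}$. The restriction of the Weyl cover of $\Def(A_{2k})$ to the smoothing line $\{a_{0}=t,\ a_{i}=0\text{ for }i\geq 1\}$ is the cyclic degree-$(2k+1)$ extension $R\hookrightarrow R_{1}:=\KK[[s]]$ with $t=-s^{2k+1}$ and labelled roots $r_{j}=\zeta^{j-1}s$. The valuative criterion applied to the proper morphism $g$ produces a lift $\alpha_{1}\co \spec R_{1}\to T$ compatible with this Weyl cover structure, and pulling back the stable family on $T$ yields a stable reduction of $\mathcal{F}$ over $\spec R_{1}$; since $2(2k+1)\nmid(2k+1)$, this too contradicts the classical input. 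The main obstacle in writing out the proof carefully is the classical input itself, which should either be proved directly via the weighted blow-up argument above or, more conceptually, by invoking the order of the Milnor monodromy.
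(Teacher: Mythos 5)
Your proposal is correct in substance, but it takes a genuinely different route from the paper. The paper proves only the existence statement, by an explicit construction: it builds a smooth surface $\X'$ over a degree-two cover $T'$ of a disc $T$ whose central fibre is a nodal union of two hyperelliptic genus-$k$ curves meeting at a Weierstrass point, contracts one component to produce the curve $C$ with its $A_{2k}$ singularity, and observes that the family over $T'\smallsetminus 0$ descends to $T\smallsetminus 0$ while the stable model over $T'$ cannot be a pullback from $T$ (a descent would force the total space to acquire a surface singularity of type $A_{\geq 1}$ at the node, whereas the constructed total space is smooth). That argument is elementary and avoids any monodromy input, but it only produces particular curves $C$. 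Your argument instead shows the stronger statement that \emph{every} integral $C$ with a unique $A_{2k}$ singularity and $p_a(C)\geq 2$ works: you restrict to the arc $y^2=x^{2k+1}+t$ (which lifts to $\Def(C)$ by formal smoothness of $\Def(C)\ra\Def(\hat{\O}_{C,p})$), and use that the inertia action on $H^1$ of the generic fibre has an eigenvalue of order $2(2k+1)$, so that by the Deligne--Mumford/Grothendieck criterion no stable model exists after a base change of degree $1$ (birational modification) or $2k+1$ (Weyl cover along this arc). This buys uniformity and a cleaner treatment of the Weyl-cover case, at the cost of invoking the unipotent-monodromy criterion and the Milnor monodromy computation. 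Two points deserve care in a full write-up. First, your ``more direct'' justification of the classical input is not sufficient as stated: showing that the single substitution $Y=y/s^{(2k+1)/2}$ is not regular rules out one candidate model, not all stable models over $R_1$; you should rely on the monodromy argument (or on uniqueness of stable models together with an explicit computation of the minimal regular semistable model), as you yourself flag. Second, to see that the order-$2(2k+1)$ eigenvalue actually occurs in $H^1$ of the \emph{global} smooth fibre, you need the surjection from $H^1$ of the nearby fibre onto the vanishing cohomology at $p$; this holds here because $C$ is integral and $A_{2k}$ is unibranch (so $H^2(C)\ra H^2(X_t)$ is injective in the specialization sequence), and it is worth recording explicitly since the claim fails for reducible degenerations.
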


\begin{proof}
To prove the first part it suffices to exhibit $C$ and a smoothing $f\co \C\ra (T,0)$ 
of $C$ such that the stable curve 
$\C\times_{T}(T\smallsetminus 0) \ra T\smallsetminus 0$ does not 
extend to a stable curve over $T$. 

To begin, consider a 
family of stable $(4k+2)$-pointed rational curves $\Y \ra (T,0)$ (over a spectrum of a DVR)
with the central fiber 
$Y_0=E_1\cup E_2$ --
a nodal union of two rational curves each marked by exactly $2k+1$ sections. 
Assume that the total space of $\Y$ is smooth.
Denote by $\Sigma$ the union of all $4k+2$ sections. 
Next, let $T' \ra T$ be a finite base extension of degree $2$, ramified over $0$.
The fiber product $\Y'=\Y\times_{T} T'$ has a (surface) singularity of type $A_1$ 
lying over the node of $Y_0$. Make an ordinary blow-up with the center at this 
singularity, and denote by $F$ the exceptional divisor. 
By, e.g., Lemma \ref{L:even-line-bundle}, the divisor $\Sigma+F$ is divisible 
by $2$ in the Picard group of the blown-up surface.
We can now construct a cyclic $2$-cover branched over $\Sigma+F$.
The resulting cover is smooth and contains a $(-1)$-curve -- the preimage of $F$.
Consider the Stein factorization of the morphism from the said cyclic cover to $\Y'$.
The resulting Stein morphism blows-down the $(-1)$-curve, and the resulting surface $\X'$ 
admits a finite, degree $2$ morphism to $Y'$. 

Note that, by construction, the central fiber of $\X'$ is a nodal union 
$X_1\cup X_2$ of two hyperelliptic genus $k$ curves. 
The line bundle $\omega_{\X'/T'}((2k-1)X_1)$ is relatively base-point-free: 
Clearly, it has no base points away from $X_{1}$, and by considering the divisor
$(4k-2)\tau$, where $\tau$ is a Weierstrass section of $\X'/T'$ disjoint from $X_{1}$,
we conclude that it has no base points along $X_{1}$. 
Since $\omega_{\X'/T'}((2k-1)X_1)\vert_{X_{1}}\simeq \O_{X_{1}}$,
it follows that $\omega_{\X'/T'}((2k-1)X_1)$ defines a $T'$-morphism $\X'\ra \X''$
contracting $X_1$ to a point. 
A simple application the theorem on formal functions shows that 
the image of $X_{1}$ is an $A_{2k}$ singularity of 
the central fiber $X''_0$.

Finally, we take $C:=X''_{0}$. 
By the construction,
we have that $\X''_{\vert T'\smallsetminus 0}$ is a base extension of a 
smooth family of genus $2k$ curves  
over $T\smallsetminus 0$
and so the morphism $T'\smallsetminus 0\ra \Mg{2k}$ factors through $T \smallsetminus 0$.  
However, since $\X''$
is smooth, $\X''\ra T'$ cannot be a base extension of a stable family of genus $2k$ curves over
 $T$. It follows that the morphism $T \smallsetminus 0 \ra \Mg{2k}$ does not extend to $T$. This establishes the first part of the proposition. 
 
 The second part follows analogously due to the fact that, by the above construction, 
 the morphism from $T$ to $\Def(C)$ factors through the Weyl cover of $\Def(C)$.

\end{proof}

\begin{remark}
We remark that Proposition \ref{P:nonexistence} strengthens 
the non-existence part of \cite[Theorem 6.1]{radu-yano}, 
which is proven there using monodromy considerations. 
\end{remark}

\bibliography{ADpaperBIB}
\bibliographystyle{alpha}
\end{document}